\documentclass[10pt]{article}

\usepackage[margin=1.5in,left=1.5in]{geometry}
\usepackage[default,regular,semibold]{sourceserifpro}
\usepackage[T1]{fontenc}

\usepackage{amsmath,amsthm,mathtools,amssymb, amscd}
\usepackage{enumitem}
\usepackage{graphicx}
\usepackage{subcaption}
\usepackage{booktabs}
\usepackage{multirow}
\usepackage[all,cmtip]{xy}

\usepackage[backend=biber]{biblatex}
\usepackage{xcolor}
\usepackage{hyperref}
\definecolor{darkred}{rgb}{0.6, 0, 0}
\hypersetup{
  colorlinks=true,
  linkcolor=black,
  citecolor=darkred,
  urlcolor=blue
}

\usepackage{fancyhdr}
\fancypagestyle{plain}{%
  \fancyhf{}\fancyfoot[C]{\thepage}%
}

\usepackage{tikz-cd}
\usetikzlibrary{positioning}
\usetikzlibrary{arrows}
\usetikzlibrary{arrows.meta}
\usepackage{algorithm}
\usepackage{algpseudocode}
\usepackage{enumitem}
\algnewcommand\algorithmicclass{\textbf{class}}
\algblockdefx{Class}{EndClass}
  [1]{\algorithmicclass\ #1}
  {\textbf{end class}}

\newcommand{\Rips}{\mathrm{Rips}}

\newcommand{\diam}{\mathrm{diam}}
\newcommand{\Nerve}{\mathrm{Nrv}}
\newcommand{\CoNerve}{\mathrm{CoNrv}}

\newtheorem{definition}{Definition}[section]
\newtheorem{theorem}[definition]{Theorem}
\newtheorem{lemma}[definition]{Lemma}
\newtheorem{proposition}[definition]{Proposition}
\newtheorem{corollary}[definition]{Corollary}
\newtheorem{remark}[definition]{Remark}
\newtheorem{example}[definition]{Example}

\newcommand{\Cat}[1]{\boldsymbol{\mathbf{#1}}}

\newcommand{\poset}{(\mathbb{R}_+,\leq)}

\usepackage{tcolorbox}
\definecolor{mySubtleGray}{RGB}{240,240,240}
\definecolor{clr_green}{RGB}{20,101,93}
\definecolor{clr_red}{RGB}{230,57,70}
\definecolor{clr_blue}{RGB}{39,125,161}
\definecolor{clr_orange}{RGB}{247,127,0}
\definecolor{clr_yellow}{RGB}{252,191,73}
\definecolor{clr_purple}{RGB}{77,90,175}
\tcbuselibrary{skins,breakable,hooks}
\newtcolorbox{note}[1][]{
  breakable,
  colback=mySubtleGray,
  colframe=mySubtleGray,
  boxrule=0pt,
  arc=1mm,
  outer arc=1mm,
  left=10pt, right=10pt, top=8pt, bottom=8pt,
  before skip=\medskipamount, after skip=\medskipamount,
  before upper={%
    \if\relax\detokenize{#1}\relax
    \else
      \noindent\textbf{#1}\par\medskip
    \fi
  }
}

\usepackage{authblk}
\author{Ant\'onio Leit\~ao\\ \texttt{antonio.lagedesousaleitao@sns.it}}
\affil{Scuola Normale Superiore, Pisa, Italy \\ DataShape, Inria-Saclay, France}
\title{\textbf{It's all about covers}\\Persistent Homology of Cover Refinements}

\begin{document}
\maketitle
\begin{abstract}
     The computational cost of persistent homology is often dominated by the growth of the underlying simplicial filtrations.
     Many different filtrations exist, each with its own assumptions and trade-offs, but all face some form of this growth which can be exponential in the worst case, as for the Vietoris–Rips.
     We recast this problem at the level of covers, developing a framework in which filtrations and persistence modules can be constructed, analyzed, and compared through simple relations between covers rather than at the level of simplicial complexes.
     The guarantees propagate through any functor that preserves the contiguity of refinement maps; we give the example of two such functors: the Nerve and the Co-Nerve.
     Working at this level is drastically simpler, with stronger, more general consequences.
     We explore this perspective and show how it can be used to construct a robust approximation of the Vietoris-Rips filtration that is orders of magnitude smaller, while maintaining a $\log 3$-interleaving unconditionally for any metric space.
     The resulting filtration restores near-linear scaling in the number of data points and enables to efficiently capture homology at high degrees.
\end{abstract}

\medskip
\noindent\textbf{Keywords:} Persistent homology \textperiodcentered{} Topological data analysis \textperiodcentered{} Vietoris--Rips filtration \textperiodcentered{} Nerve complex \textperiodcentered{} Dowker duality \textperiodcentered{} Cover refinement \textperiodcentered{} Interleaving distance \textperiodcentered{} Hierarchical clustering

\section{Introduction}

Topological Data Analysis (TDA) uses methods of topology---the field of mathematics that studies qualitative geometric properties---to extract qualitative information from data~\cite{carlsson2009topology}.
This type of analysis has found wide application in diverse areas~\cite{DONUT}.
One of the main tools of TDA is persistent homology, which assigns to data a nested sequence of topological spaces indexed over a parameter; called a \textit{filtration}.
For each parameter value the assigned topological space can be seen as a representation, in some appropriate sense, of the data at that scale.
The output of persistent homology is a summary of all the relevant topological features, such as holes and connected components, along with the scale over which they remain relevant: their persistence.
Because these sequences are nested, the topological spaces grow with the parameter value, exponentially in the worst case.
This has confined the computation of persistent homology to moderate-sized datasets and low homological degree.

There is a diverse range of methods and simplicial complexes that are used in Topological Data Analysis~\cite{Silva2004TopologicalEU,edelsbrunner2003shape}.
Some have been developed specifically to curb this growth in filtration sizes~\cite{sheehy2012linear, graf2026floodcomplexlargescalepersistent, barmak2012strong, boissonnat2019computing, dlotko2013simplification}.
While each achieves significant results, they typically rely on assumptions that constrain the geometry, dimension, or homological degree.

Here we take a step back and create at the cover level the necessary framework that allows us to tackle this issue.
Our central observation is that covers are the natural level of abstraction for creating simplicial filtrations and comparing their persistence modules.
The key property is that all refinement maps between two covers are contiguous, a discrete analogue of homotopic maps, and induce contiguous simplicial maps.
As a consequence, an interleaving between covers induces an interleaving between the persistence modules of any combination of simplicial complexes built from them.
Working at the cover level is drastically simpler and has stronger, more general consequences.

We apply this framework to construct a robust approximation of the Vietoris–Rips filtration.
We introduce an opposing force to the growth of covers, a persistent partition that progressively coarsens the space as the scale
increases.
By quotienting the covers with this partition at each scale, we obtain a sequence of smaller covers whose associated simplicial complexes are $\log 3$-interleaved with the Vietoris–Rips filtration.
This guarantee is unconditional: it does not depend on doubling dimension, ambient Euclidean structure, or homological degree, and holds for any metric space.
The resulting filtration is orders of magnitude smaller than the standard Vietoris–Rips, restoring near-linear scaling in the number of data points.

\subsection{Contributions}

\begin{enumerate}
    \item \textbf{Covers as the origin of topological structure.}
Given a cover $\mathcal{U} = \{U_i\}_{i \in I}$ of a space $X$, there are two simplicial complexes that have classically been built from it: the Nerve, whose simplices correspond to sets with non-empty intersection, and the Co-Nerve, whose simplices are subsets of $X$ contained in a single cover element.
The Dowker theorem~\cite{dowker1952homology} guarantees that these two complexes have isomorphic homology (Figure~\ref{fig:nerve_conerve}).

We show that proving an interleaving between the persistence modules of any combination of Nerves and Co-Nerve reduces to showing the existence of refinement maps between the underlying covers at each scale (Theorem~\ref{thm:main_propagation}).
Since all refinement maps between two covers are contiguous (Proposition~\ref{prop:all_refinements_contiguous}).
As a first application, we recover the classical interleaving between the \v{C}ech and Vietoris-Rips filtrations (Corollary~\ref{cor:cech_rips}) as a consequence of a $2$-approximation between their underlying ball and maximal-clique covers.
\newsavebox{\ExampleCover}
\savebox{\ExampleCover}{
\begin{tikzpicture}[
  clique_cover/.style={fill=color_r20g101b93, opacity=0.2},
  edge65/.style={color_r194g202b201, opacity=0.8, thick},
  edge66/.style={black, opacity=0.8, thick},
  node/.style={circle, fill=black, inner sep=1.50pt},scale=0.7]
  \definecolor{color_r194g202b201}{rgb}{0.761,0.792,0.788}
  \definecolor{color_r20g101b93}{rgb}{0.078,0.396,0.365}
  \fill[clique_cover] (2.000,1.600) circle (0.881);
  \fill[clique_cover] (2.850,1.400) circle (0.632);
  \fill[clique_cover] (1.500,2.833) circle (0.784);
  \fill[clique_cover] (2.500,3.500) circle (0.639);
  \fill[clique_cover] (3.400,3.350) circle (0.632);
  \fill[clique_cover] (3.500,2.400) circle (0.771);
  \node[node] at (1.000,3.000) {};
  \node[node] at (2.000,3.300) {};
  \node[node] at (1.500,2.200) {};
  \node[node] at (3.000,3.700) {};
  \node[node] at (3.800,3.000) {};
  \node[node] at (3.200,1.800) {};
  \node[node] at (2.500,1.000) {};
\end{tikzpicture}
}

\newsavebox{\ExampleCoNerve}
\savebox{\ExampleCoNerve}{
\begin{tikzpicture}[
  edge19/.style={black, opacity=0.8, thick},
  node/.style={circle, fill=black, inner sep=1.50pt},
  triangle/.style={fill=color_r230g57b71, opacity=0.3, draw=none},scale=0.7]
  \definecolor{color_r230g57b71}{rgb}{0.902,0.224,0.278}
  \fill[triangle] (1.000,3.000) -- (2.000,3.300) -- (1.500,2.200) -- cycle;
  \draw[edge19] (1.000,3.000) -- (2.000,3.300);
  \draw[edge19] (1.000,3.000) -- (1.500,2.200);
  \draw[edge19] (2.000,3.300) -- (1.500,2.200);
  \draw[edge19] (2.000,3.300) -- (3.000,3.700);
  \draw[edge19] (1.500,2.200) -- (2.500,1.000);
  \draw[edge19] (3.000,3.700) -- (3.800,3.000);
  \draw[edge19] (3.800,3.000) -- (3.200,1.800);
  \draw[edge19] (3.200,1.800) -- (2.500,1.000);
  \node[node] at (1.000,3.000) {};
  \node[node] at (2.000,3.300) {};
  \node[node] at (1.500,2.200) {};
  \node[node] at (3.000,3.700) {};
  \node[node] at (3.800,3.000) {};
  \node[node] at (3.200,1.800) {};
  \node[node] at (2.500,1.000) {};
\end{tikzpicture}
}

\newsavebox{\ExampleNerve}
\savebox{\ExampleNerve}{
\begin{tikzpicture}[
 point/.style={circle, fill=black, inner sep=1.5pt},
 connection/.style={thick, opacity=0.8},
 scale=0.3
]
  \node[point] (A) at (90:3.5)  {};
  \node[point] (B) at (150:3.5) {};
  \node[point] (C) at (210:3.5) {};
  \node[point] (D) at (270:3.5) {};
  \node[point] (E) at (330:3.5) {};
  \node[point] (F) at (30:3.5)  {};

  \draw[connection] (A) -- (B);
  \draw[connection] (B) -- (C);
  \draw[connection] (C) -- (D);
  \draw[connection] (D) -- (E);
  \draw[connection] (E) -- (F);
  \draw[connection] (F) -- (A);

\end{tikzpicture}
}

\begin{figure}[htb]
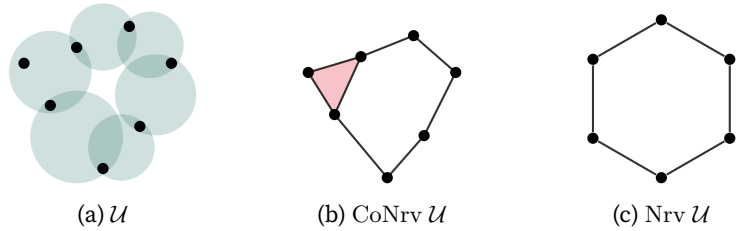

\centering

\begin{subfigure}[b]{0.26\textwidth}
\centering
\usebox{\ExampleCover}
\caption{$\mathcal{U}$}
\end{subfigure}
\begin{subfigure}[b]{0.26\textwidth}
\centering
\usebox{\ExampleCoNerve}
\caption{$\CoNerve\;\mathcal{U}$}
\end{subfigure}
\begin{subfigure}[b]{0.26\textwidth}
\centering
\usebox{\ExampleNerve}
\caption{$\Nerve\;\mathcal{U}$}
\end{subfigure}

\caption{Illustration of a cover (a) and the resulting simplicial complexes obtained from the Co-Nerve (b) and the Nerve (c). The Dowker theorem~\cite{dowker1952homology} guarantees the existence of an isomorphism between their homology.}
\label{fig:nerve_conerve}

\end{figure} 
\vspace{0.3cm}
\item 
\textbf{Quotient before complex.}
With the objective of reducing the size of filtrations, we show (Corollary~\ref{cor:quotient_of_covers}) that instead of taking the quotient of a simplicial complex, one can quotient the cover first and then build the simplicial complex on the smaller space (Fig.~\ref{fig:quotient_complexes}):
\begin{align*}
    \CoNerve(\mathcal{U})/P_X &= \CoNerve(\mathcal{U}/P_X), \\
    \Nerve(\mathcal{U})/P_I &= \Nerve(\mathcal{U}/P_I).
\end{align*}
This is particularly valuable when building the simplicial complex is itself expensive, as is the case for clique complexes.
Rather than constructing a large complex and then reducing it, we build directly on the reduced space.
Combined with the previous result, any approximation with a quotient cover induces an approximation between the persistence modules of any combination of Nerve and Co-Nerve built from them.
\begin{figure}[htb]
    \centering
    \newsavebox{\QPanelCov}
\savebox{\QPanelCov}{

\begin{tikzpicture}[
  clique_cover/.style={fill=color_r20g101b93, opacity=0.2},
  edge65/.style={color_r194g202b201, opacity=0.8, thick},
  edge66/.style={black, opacity=0.8, thick},
  partition/.style={black, opacity=0.8, thick, dashed, rounded corners=8pt},
  node/.style={circle, fill=black, inner sep=1.50pt},scale=0.7]
  
  \definecolor{color_r194g202b201}{rgb}{0.761,0.792,0.788}
  \definecolor{color_r20g101b93}{rgb}{0.078,0.396,0.365}
  
  \fill[clique_cover] (2.000,1.600) circle (0.881);
  \fill[clique_cover] (2.850,1.400) circle (0.632);
  \fill[clique_cover] (1.500,2.833) circle (0.784);
  \fill[clique_cover] (2.500,3.500) circle (0.639);
  \fill[clique_cover] (3.400,3.350) circle (0.632);
  \fill[clique_cover] (3.500,2.400) circle (0.771);

  \draw[partition] (0.600, 3.100) -- (2.400, 3.600) -- (1.500, 1.700) -- cycle;
  
  \draw[partition] (2.550, 3.600) -- (3.150, 4.200) -- (4.250, 3.100) -- (3.650, 2.500) -- cycle;
  
  \draw[partition] (3.050, 2.250) -- (3.650, 1.650) -- (2.650, 0.550) -- (2.050, 1.150) -- cycle;

  \node[node] at (1.000,3.000) {};
  \node[node] at (2.000,3.300) {};
  \node[node] at (1.500,2.200) {};
  \node[node] at (3.000,3.700) {};
  \node[node] at (3.800,3.000) {};
  \node[node] at (3.200,1.800) {};
  \node[node] at (2.500,1.000) {};
  
  \node[above] at (1.000, 3.500) {$C_1$};
  \node[above right] at (3.800, 3.800) {$C_2$};
  \node[below right] at (3.300, 1.500) {$C_3$};
\end{tikzpicture}
}

\newsavebox{\QPanelQ}
\savebox{\QPanelQ}{
\begin{tikzpicture}[
scale=0.4,
  ball_cover/.style={fill=color_r20g101b93, opacity=0.2},
  node/.style={circle, fill=black, inner sep=1.50pt}]
  \definecolor{color_r20g101b93}{rgb}{0.078,0.396,0.365}
  
  \coordinate (A) at (1.000, 1.000);
  \coordinate (B) at (5.000, 1.000);
  \coordinate (C) at (3.000, 4.464); 


  \begin{scope}
    \fill[ball_cover] 
      (3.000, 1.000) ellipse (2.3 and 0.7); 
  \end{scope}

  \begin{scope}
    \fill[ball_cover, rotate around={60:(2.000, 2.732)}] 
      (2.000, 2.732) ellipse (2.3 and 0.7);
  \end{scope}

  \begin{scope}
    \fill[ball_cover, rotate around={-60:(4.000, 2.732)}] 
      (4.000, 2.732) ellipse (2.3 and 0.7);
  \end{scope}

  \begin{scope}
    \fill[ball_cover] (A) circle (0.650);
  \end{scope}
  \begin{scope}
    \fill[ball_cover] (B) circle (0.650);
  \end{scope}
  \begin{scope}
    \fill[ball_cover] (C) circle (0.650);
  \end{scope}

  \node[node] at (A) {};
  \node[node] at (B) {};
  \node[node] at (C) {};

  \node[above] at (A) {$C_1$};
  \node[above] at (B) {$C_3$};
  \node[right] at (C) {$C_2$};
\end{tikzpicture}

}

\newsavebox{\QPanelQConerve}
\savebox{\QPanelQConerve}{
\begin{tikzpicture}[
 point/.style={circle, fill=black, inner sep=1.5pt},
 connection/.style={thick, opacity=0.8},
 scale=0.35
]
  \node[point] (A) at (0, 0) {}; 
  \node[point] (F) at (3, 0) {}; 
  \node[point] (H) at (1.5, 2.598) {}; 
   \foreach \label in {A, F, H} {
        \node[point] (\label) at (\label) {};
    }
    \node[above=3pt] at (H) {$C_2$};
    \node[left=3pt] at (A) {$C_1$};
    \node[right=3pt] at (F) {$C_3$};
    \draw[connection] (A) -- (F);
    \draw[connection] (F) -- (H);
    \draw[connection] (A) -- (H);
\end{tikzpicture}
}

\newsavebox{\QPanelConerve}
\savebox{\QPanelConerve}{
\begin{tikzpicture}[
  edge19/.style={black, opacity=0.8, thick},
  node/.style={circle, fill=black, inner sep=1.50pt},
  triangle/.style={fill=color_r230g57b71, opacity=0.3, draw=none},scale=0.6]
  \definecolor{color_r230g57b71}{rgb}{0.902,0.224,0.278}
  \fill[triangle] (1.000,3.000) -- (2.000,3.300) -- (1.500,2.200) -- cycle;
  \draw[edge19] (1.000,3.000) -- (2.000,3.300);
  \draw[edge19] (1.000,3.000) -- (1.500,2.200);
  \draw[edge19] (2.000,3.300) -- (1.500,2.200);
  \draw[edge19] (2.000,3.300) -- (3.000,3.700);
  \draw[edge19] (1.500,2.200) -- (2.500,1.000);
  \draw[edge19] (3.000,3.700) -- (3.800,3.000);
  \draw[edge19] (3.800,3.000) -- (3.200,1.800);
  \draw[edge19] (3.200,1.800) -- (2.500,1.000);
  \node[node] at (1.000,3.000) {};
  \node[node] at (2.000,3.300) {};
  \node[node] at (1.500,2.200) {};
  \node[node] at (3.000,3.700) {};
  \node[node] at (3.800,3.000) {};
  \node[node] at (3.200,1.800) {};
  \node[node] at (2.500,1.000) {};
\end{tikzpicture}
}



\begin{tikzpicture}
\node[anchor=center] {
\begin{tikzcd}[
    row sep=0.4cm,
    column sep=0.5cm,
    nodes={inner sep=2pt},
]
{\tikz[baseline=(current bounding box.center)] 
  \node {\usebox{\QPanelCov}};}
\arrow[r, maps to, "q"] \arrow[d, maps to, "\CoNerve"']
&
{\tikz[baseline=(current bounding box.center)] 
  \node {\usebox{\QPanelQ}};}
\arrow[d,maps to, "\CoNerve"]
\\
{\tikz[baseline=(current bounding box.center)] 
  \node {\usebox{\QPanelConerve}};}
\arrow[r,maps to,  "q"]
&
{\tikz[baseline=(current bounding box.center)] 
  \node {\usebox{\QPanelQConerve}};}
\end{tikzcd}
};
\end{tikzpicture}
    \caption{Quotienting a cover commutes with building its Co-Nerve. Given a cover and a partition $P = \{C_1, C_2, C_3\}$ of the space (top left), the Co-Nerve of the quotient cover (top left $\to$ top right $\to$ bottom right) equals the quotient of the Co-Nerve (top left $\to$ bottom left $\to$ bottom right).}
    \label{fig:quotient_complexes}
\end{figure}
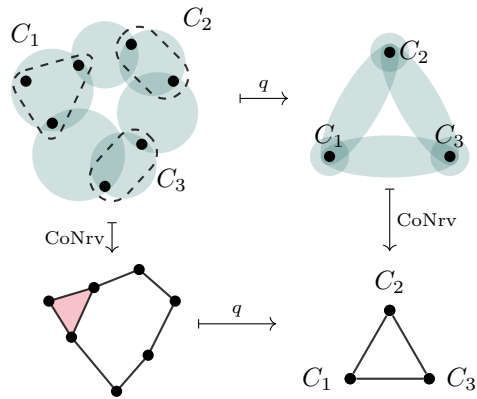

\item \textbf{New interleavings via persistent partitions.}
We introduce an opposing force to the growth of covers: a \textit{persistent partition}, a partition that progressively coarsens the space as the scale parameter increases.
By quotienting the ever-growing covers with this partition, we obtain a sequence of covers that remains small at every scale, while maintaining an interleaving (Fig.~\ref{fig:thick_simplicial_tower}).
Specifically, given a persistent partition $P$, we obtain the following approximations (multiplicative interleavings) between the ball cover ($\mathcal{B}$) and maximal-clique covers ($\mathcal{M}$):
$$
\mathcal{B}/P \xleftrightarrow{2} \mathcal{B}
\quad \text{and} \quad
\mathcal{M}/P \xleftrightarrow{3} \mathcal{M}.
$$
Since the underlying covers are interleaved any simplicial complex built from them will also have interleaved persistence modules.
Specifically (Fig.~\ref{fig:thick_simplicial_tower}) shows the Co-Nerve of $\mathcal{M}$ (Vietoris-Rips filtration) and  the Co-Nerve of $\mathcal{M}/P$ which are $\log 3$-interleaved because they are built from cover sequences $\mathcal{M}$ and $\mathcal{M}/P$ that are $\log 3$-interleaved.

\begin{figure}[htb]
    \centering
    \input{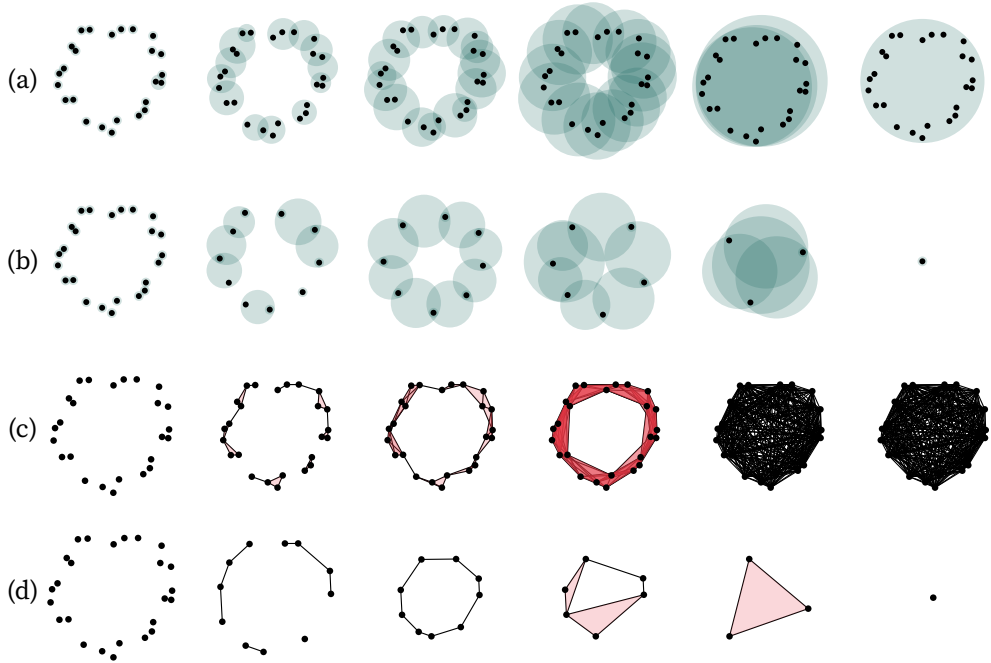}
    \caption{
    How approximations propagate from covers to simplicial complexes.
    The persistence modules of the Vietoris--Rips filtration \textbf{(c)} and our quotient filtration \textbf{(d)} are interleaved because they are the Co-Nerves of the cover sequences \textbf{(a)} and \textbf{(b)}, which are themselves interleaved.
    }
    \label{fig:thick_simplicial_tower}
\end{figure}

\item 
\textbf{Functorial hierarchical clustering.}
A valid persistent partition must satisfy two requirements: it must be \textit{functorial} and every block must have the diameter bounded by the current scale.
Hierarchical clustering is the natural source of sequences of partitions, and the question of which clustering methods are functorial was initiated by \citeauthor{carlsson10acharacterization}~\cite{carlsson10acharacterization, carlsson2010classifyingclusteringschemes}.
We show that complete linkage is the minimal linkage function that guarantees the diameter bound (Proposition~\ref{prop:complete_linkage_minimal}).
However, standard complete linkage fails functoriality: in the presence of ties, the output depends on the order in which pairs are
processed.
The functorial alternative of \citeauthor{carlsson10acharacterization}~\cite{carlsson10acharacterization} resolves this by merging transitively, but violates the diameter bound. We introduce \textit{conservative complete linkage} to restore functoriality while respecting the
diameter bound at every scale.

\item \textbf{Practical impact.}
Our construction produces a simplicial tower, which we convert to a filtration via the coning procedure of~\citeauthor{kerber2019barcodes}~\cite{kerber2019barcodes} and pass to Ripser~\cite{Bauer2021Ripser} for persistence computation; details are in Section~\ref{sec:computation}.
The resulting filtrations are orders of magnitude smaller than the standard Vietoris-Rips (Table~\ref{tab:filtration_comparison}), restoring near-linear scaling in the number of data points (Figure~\ref{fig:results_scalability}).
Our method reduces redundancy while efficiently preserving topology (Fig.~\ref{fig:results_complexity}): $H_2$ computations scale to $10^5$ points (Fig.~\ref{fig:benchmarks_torus}), and we detect the $H_5$ of a $5$-sphere (Figure~\ref{fig:benchmarks_h5}) from $2000$ points on a personal computer.

\end{enumerate}

\begin{table}[ht]
\centering
\caption{
Filtration size and persistence computation runtime across datasets of varying size $n$ and ambient dimension $d$.
Simplex counts up to dimension $2$ (M: millions, k: thousands) and runtimes compare the standard Vietoris-Rips filtration (VR) with our quotient construction.
All filtrations are computed up to the minimum enclosing radius $\min_{x \in X} \max_{y \in X} d(x,y)$, beyond which the Vietoris-Rips complex has trivial topology, and persistence is computed up to $H_1$.
}
\begin{tabular}{lccccccc}
\toprule
\multirow{2}{*}{Dataset} & \multirow{2}{*}{$n$} & \multirow{2}{*}{$d$} & \multicolumn{2}{c}{Filtration Size} & \multicolumn{2}{c}{Runtime (s)} \\
\cmidrule(lr){4-5} \cmidrule(lr){6-7}
 & & & \multicolumn{1}{c}{\small VR} & \multicolumn{1}{c}{\small Ours} & \multicolumn{1}{c}{\small VR} & \multicolumn{1}{c}{\small Ours} \\
\midrule
celegans & $297$ & $202$ & $4.1\,\mathrm{M}$ & $8.9\,\mathrm{k}$ & $0.019$ & $0.005$ \\
vicsek & $300$ & $3$ & $4.1\,\mathrm{M}$ & $2.2\,\mathrm{k}$ & $0.021$ & $0.004$ \\
klein\_400 & $400$ & $3$ & $6.7\,\mathrm{M}$ & $5.0\,\mathrm{k}$ & $0.092$ & $0.009$ \\
klein\_900 & $900$ & $3$ & $76.1\,\mathrm{M}$ & $15.4\,\mathrm{k}$ & $1.805$ & $0.050$ \\
dragon\_1k & $1000$ & $3$ & $56.4\,\mathrm{M}$ & $14.3\,\mathrm{k}$ & $0.239$ & $0.043$ \\
dragon\_2k & $2000$ & $3$ & $546.8\,\mathrm{M}$ & $30.4\,\mathrm{k}$ & $1.621$ & $0.182$ \\
HIV1 & $1088$ & $673$ & $157.1\,\mathrm{M}$ & $117.5\,\mathrm{k}$ & $0.563$ & $0.106$ \\
o3\_1024 & $1024$ & $9$ & $175.7\,\mathrm{M}$ & $118.6\,\mathrm{k}$ & $1.393$ & $0.097$ \\
o3\_2048 & $2048$ & $9$ & $1411.3\,\mathrm{M}$ & $305.7\,\mathrm{k}$ & $10.457$ & $0.373$ \\
pbmc3k & $2700$ & $50$ & $3261.3\,\mathrm{M}$ & $6.2\,\mathrm{M}$ & $3.660$ & $0.713$ \\
\bottomrule
\end{tabular}
\label{tab:filtration_comparison}
\end{table}

\begin{figure*}[!htb]
    \centering
    \begin{subfigure}[b]{0.32\textwidth}
        \centering
        \includegraphics[height=3.85cm]{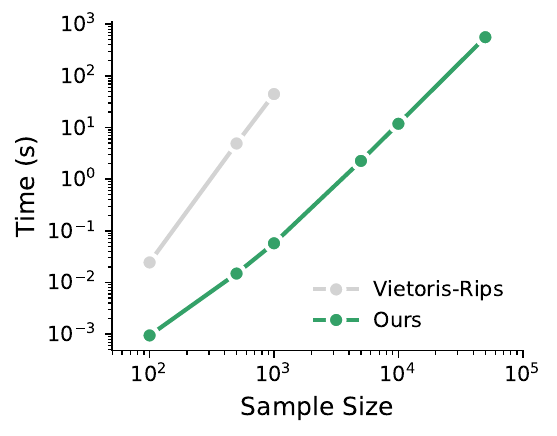}
        \caption{}
        \label{fig:benchmarks_torus}
    \end{subfigure}
    \hfill
    \begin{subfigure}[b]{0.32\textwidth}
        \centering
        \includegraphics[height=3.85cm]{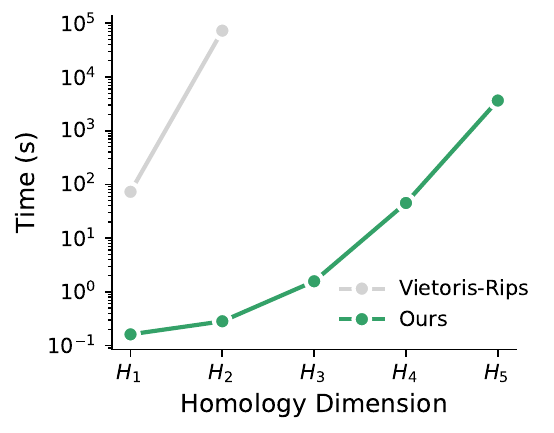}
        \caption{}
        \label{fig:benchmarks_sphere}
    \end{subfigure}
    \hfill
    \begin{subfigure}[b]{0.32\textwidth}
        \centering
        \resizebox{0.95\textwidth}{!}{%
           \input{content/figures/h5_persistence_diag.tex}
        }
        \caption{}
        \label{fig:benchmarks_h5}
    \end{subfigure}
    \caption{
    Runtime comparison between Vietoris-Rips filtration and our method.
    \textbf{(a)}~$H_2$ computation time for increasing sample sizes of a torus in $\mathbb{R}^3$.
    \textbf{(b)}~$H_n$ computation time for $2000$-point samples of $n$-spheres. 
    \textbf{(c)}~Persistence diagram from the $5$-sphere run, capturing the $H_5$ non-trivial class.
    }
    \label{fig:benchmarks}
\end{figure*}

\subsection{Related Work}
This work builds on several lines of research in topological data analysis.
While these directions are often presented independently, they are closely related through the structure of the underlying covers.

\paragraph{Covers and Dowker complexes.}
A foundational perspective in algebraic topology is the construction of simplicial complexes from covers, the Nerve being the classical example.
Dowker~\cite{dowker1952homology} generalized this by constructing two simplicial complexes from an arbitrary binary relation and its transpose, proving that they have isomorphic homology.
This result was later strengthened to a homotopy equivalence~\cite{bjorner1995topological}, shown to be natural with respect to inclusion of relations~\cite{chowdhury2018functorial, brun2019sparse} and cover refinements~\cite{virk2019rips}; for concise modern proofs of this duality, see \cite{yoon2026shortnewproofsdowker}.
The nerve construction is recovered as a special case when the binary relation is the inclusion relation of a cover (see \cite{desilva2025dowkerstheoremhigherorderrelations} for recent generalizations of Dowker's theorem to higher-order multiway relations).
Virk~\cite{virk2019rips} highlighted the Vietoris-Rips complex as one of the Dowker complexes associated with the maximal-clique cover, linking it to the Nerve of the same cover.

\paragraph{Sparse and reduced filtrations.}
The growth of simplicial filtrations is a fundamental bottleneck in persistent homology, and many strategies have been developed to mitigate it.
One direction focuses on selecting a sparser subset of the input: witness complexes~\cite{Silva2004TopologicalEU} select landmark points, while sparse Rips filtrations~\cite{sheehy2012linear,buchet2016efficient,buchet2024sparse} and generalized Dowker nerves~\cite{brun2019sparse,blaser2019sparsefilterednerves, blaser2019sparsenervespractice} construct topologically guaranteed approximations. 
Though theoretical linear-size bounds for these methods require a bounded doubling dimension, they still achieve significant practical compression on high-dimensional and non-metric data.
A complementary direction exploits Euclidean geometry to either constrain filtration growth, as in $\alpha$-complexes~\cite{edelsbrunner2003shape,graf2026floodcomplexlargescalepersistent}, or to accelerate the computation of Rips filtrations~\cite{koyama2024distilledvietorisripsfiltration,AGGARWAL2024102290}.
A third line of work reduces simplicial complexes via strong collapses~\cite{barmak2012strong}, which remove free faces without changing homotopy type.
In a filtered setting, collapses can preserve persistent homology~\cite{dlotko2013simplification,boissonnat2019computing,glisse2022swap}, or be used more aggressively to obtain efficient approximations of Vietoris-Rips filtrations~\cite{dey2019simba,dey2014computing}. 
Together, these approaches achieve significant sparsification, but typically under assumptions on geometry, ambient dimension, or homological degree.

\paragraph{Simplicial Towers}.
There is ongoing work on computing persistent homology for simplicial towers, which generalize filtrations by allowing vertex contractions besides inclusions.
\citeauthor{dey2014computing}~\cite{dey2014computing} showed that a single vertex contraction can be replaced by a small number of simplex inclusions without altering persistence, and \citeauthor{kerber2019barcodes}~\cite{kerber2019barcodes} extended this to full towers, converting any simplicial tower into a filtration of only marginally larger size.
We rely on their construction to compute the persistence of our quotient filtrations.

\paragraph{Relaxed Interleavings.}
The standard notion of interleaving requires strict commutativity of diagrams.
\citeauthor{blumberg2022universalityhomotopyinterleavingdistance}~\cite{blumberg2022universalityhomotopyinterleavingdistance} introduced homotopy interleavings, where diagrams commute only up to homotopy, and proved that a homotopy interleaving between filtrations induces a strict interleaving between persistence modules.
\citeauthor{desilva2018theoryinterleavingscategoriesflow}~\cite{desilva2018theoryinterleavingscategoriesflow} formalized this in a categorical setting: if a functor from a source category to a target category preserves the relevant structure, interleavings in the source induce interleavings in the target.
Our notion of \textit{approximation up to contiguity} (Definition~\ref{def:c-approximation-contiguity}) is the cover-level analogue of this idea: contiguity of refinement maps plays the role of homotopy, and the passage to homology converts approximate commutativity into strict equality.

\section{Background}

\subsection{Category Theory}

We begin by establishing the categorical framework that allows us to propagate approximations through functorial constructions.
A \textit{functor} $F : \Cat{C} \to \Cat{D}$ between two categories assigns to each object $A \in \Cat{C}$ an object $F(A) \in \Cat{D}$ and to each morphism $f : A \to B$ a morphism $F(f) : F(A) \to F(B)$, preserving identity and composition: $F(\mathrm{id}_A) = \mathrm{id}_{F(A)}$ and $F(g \circ f) = F(g) \circ F(f)$.

A \textit{natural transformation} $\eta : F \Rightarrow G$ between two functors $F, G : \Cat{C} \to \Cat{D}$ assigns to each object $A \in \Cat{C}$ a morphism $\eta_A : F(A) \to G(A)$ in $\Cat{D}$ such that for every morphism $f : A \to B$ the following diagram commutes:
\[
\begin{tikzcd}
F(A) \ar[r, "\eta_A"] \ar[d, "F(f)"'] & G(A) \ar[d, "G(f)"] \\
F(B) \ar[r, "\eta_B"'] & G(B)
\end{tikzcd}
\]
We say that the diagram commutes if 
$$
 G(f) \circ \eta_A = \eta_B\circ F(f)
$$
A natural transformation whose components are all isomorphisms is a \textit{natural isomorphism}.

A \textit{persistent object} is a functor $F: (\mathbb{R}_{+}, \leq) \to \Cat{C}$, for some category $\Cat{C}$ where $(\mathbb{R}_{+}, \leq)$ is viewed as a poset category.
A persistent object is a rigorous description of how an object of a certain category changes across some parameter.
There is a specific type of persistent object called a \textit{persistence module} which is central to persistent homology:

\begin{definition}[Persistence Module]
A \textbf{persistence module} is a functor $M: (\mathbb{R}_+,\leq)\to \Cat{Vect}$, assigning to each $r$ a vector space $M(r)$ and to each $r\leq s$ a linear map $M(r\leq s):M(r) \to M(s)$.
\end{definition}

The key notion of comparison between persistent objects is the interleaving, which quantifies how far two functors are from being isomorphic.

\begin{definition}[Interleaving  \cite{chazal2009proximity,lesnick2015theory,oudot2015persistence}]
Let $F,G:\poset\to \Cat{C}$ be persistent objects.
An $\varepsilon$-interleaving between $F$ and $G$ is a pair of natural transformations
\[
\phi: F\Longrightarrow G\circ T_\varepsilon,\qquad \psi: G\Longrightarrow F\circ T_\varepsilon,
\]
where $T_\varepsilon(t) = t+\varepsilon$, such that for every $t\in\mathbb R_+$ the following coherence relations hold:
\begin{align}
\label{eq:add-coh-1}
\psi_{t+\varepsilon}\circ \phi_{t} &\;=\; F(t\le t+2\varepsilon),\\
\label{eq:add-coh-2}
\phi_{t+\varepsilon}\circ \psi_{t} &\;=\; G(t\le t+2\varepsilon).
\end{align} represented by the coherence diagrams:
    
    \[
    \begin{tikzcd}[column sep=tiny]
        F(t) \arrow[rr] \arrow[dr, "\phi_t"'] & & F(t+2\varepsilon) \\
        & G(t+\varepsilon) \arrow[ur, "\psi_{t+\varepsilon}"'] &
    \end{tikzcd}
    \qquad \qquad
    \begin{tikzcd}[column sep=tiny]
        G(t) \arrow[rr] \arrow[dr, "\psi_t"'] & & G(t+2\varepsilon) \\
        & F(t+\varepsilon) \arrow[ur, "\phi_{t+\varepsilon}"'] &
    \end{tikzcd}
    \]
\end{definition}

Sometimes instead of a translation we have a scale, where the shift factor between the functors is not additive but multiplicative, which is most common in TDA literature \cite{lesnick2015theory,desilva2018theoryinterleavingscategoriesflow,kerber2013approximate}.
The multiplicative analogue of the interleaving is an \textit{approximation}.
\begin{definition}[$c$-Approximation]
\label{def:c-approximation}
Let $F, G: \poset \to \Cat{C}$ be two persistence objects.
We say that $F$ and $G$ are \textbf{$c$-approximated} if there exist natural transformations $\phi: F \Rightarrow G \circ S_c$ and $\psi: G \Rightarrow F \circ S_c$ 
(where $S_c(r) = cr$) such that:
\begin{align}
\psi_{cr} \circ \phi_r  &\;=\; F(r\leq c^2r),\\
\phi_{cr} \circ \psi_r &\;=\; G(r \leq c^2r).
\end{align}
equivalently we say that the following diagrams commute:

\begin{equation}
\begin{tikzcd}[column sep=tiny]
F(r) \arrow[rr] \arrow[dr, "\phi_r"'] & & F(c^2r) \\
& G(cr) \arrow[ur, "\psi_{cr}"'] &
\end{tikzcd}
\quad \text{and} \quad
\begin{tikzcd}[column sep=tiny]
G(r) \arrow[rr] \arrow[dr, "\psi_r"'] & & G(c^2r) \\
& F(cr) \arrow[ur, "\phi_{cr}"'] &
\end{tikzcd}
\end{equation}
We say that $F$ and $G$ are $c-$approximated and denote as:
$$\phi:F \xleftrightarrow{c} G:\psi$$
\end{definition}

The two notions are related by a logarithmic reparameterization.
An approximation can be converted into an interleaving by a reparameterization to the logarithmic scale.
That is if two functors are $c$-approximated then they are $\log(c)$-interleaved \cite{kerber2013approximate}.
\begin{proposition}
\label{prop:mult-to-add-categorical}
Let $F,G:\poset\to \Cat{C}$ be persistent objects and suppose there exists a $c$-approximation
$(\varphi,\psi)$ between them. Define the reindexed functors
\[
\widetilde F,\widetilde G : \poset \longrightarrow\Cat{C},\qquad
\widetilde F(s):=F(e^s),\quad \widetilde G(s):=G(e^s),
\]
and for $s\le t$ set $\widetilde F(s\le t):=F(e^s\le e^{t})$ (and similarly for $\widetilde G$). Then
\[
\widetilde\varphi_s:=\varphi_{e^s},\qquad \widetilde\psi_s:=\psi_{e^s}
\]
define natural transformations
\[
\widetilde\varphi:\widetilde F\Longrightarrow \widetilde G\circ T_{\ln c},\qquad
\widetilde\psi:\widetilde G\Longrightarrow \widetilde F\circ T_{\ln c},
\]
and the coherence relations \eqref{eq:add-coh-1}--\eqref{eq:add-coh-2} hold with $\varepsilon=\log c$.
In other words, $(\widetilde\varphi,\widetilde\psi)$ is a $\log c$-interleaving between $\widetilde F$ and $\widetilde G$.
\end{proposition}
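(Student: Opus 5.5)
The argument is a direct verification: transport every structure of the $c$-approximation through the reparameterization $s \mapsto e^s$ and check the interleaving axioms one at a time.

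First, $\widetilde F$ and $\widetilde G$ are functors. Since $s \mapsto e^s$ is order-preserving, $s \le t$ implies $e^s \le e^t$, so $\widetilde F(s\le t)=F(e^s\le e^t)$ is well defined. Identities and composition are inherited from $F$, and likewise for $\widetilde G$.

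The key identity is
\[
S_c(e^s)=c\,e^s=e^{s+\ln c}=e^{T_{\ln c}(s)}.
\]
Precomposing with the exponential therefore converts $S_c$ into $T_{\ln c}$, so $\widetilde G\circ T_{\ln c}(s)=G(S_c(e^s))$.

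With this in hand, naturality of $\widetilde\varphi$ follows. The component $\widetilde\varphi_s=\varphi_{e^s}$ is a morphism $F(e^s)\to G(ce^s)=\widetilde G(s+\ln c)$, as required. For $s\le t$, the naturality square of $\widetilde\varphi$ is exactly the naturality square of $\varphi$ for the pair $e^s\le e^t$:
\[
G(ce^s\le ce^t)\circ\varphi_{e^s}=\varphi_{e^t}\circ F(e^s\le e^t).
\]
Its left side equals $\widetilde G(s+\ln c\le t+\ln c)\circ\widetilde\varphi_s$. The same argument applies to $\widetilde\psi$.

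Finally, the coherence relations translate directly. We have $\widetilde\psi_{s+\ln c}\circ\widetilde\varphi_s=\psi_{ce^s}\circ\varphi_{e^s}$. Applying the approximation identity at $r=e^s$, this equals $F(e^s\le c^2e^s)=F(e^s\le e^{s+2\ln c})=\widetilde F(s\le s+2\ln c)$, which is \eqref{eq:add-coh-1} with $\varepsilon=\ln c$. The symmetric computation gives \eqref{eq:add-coh-2}.

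There is no serious obstacle here, only two bookkeeping points. First, the exponential maps $\mathbb R_+$ onto $[1,\infty)$ rather than all of $\mathbb R_+$. This is harmless, because every identity is only ever evaluated at points $e^s$ in the image, so it suffices that the hypotheses hold for all $r>0$; alternatively, one may index over $\mathbb R$. Second, $\log$ here means the natural logarithm, and $c\ge 1$ is needed so that $\ln c\ge 0$ and $T_{\ln c}$ is a valid shift.
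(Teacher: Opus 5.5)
Your proposal is correct. It is the logarithmic-reparameterization argument the paper relies on (the paper states this proposition without a proof of its own and cites the standard conversion), and your componentwise checks amount to whiskering $\varphi$ and $\psi$ by $\exp$ using the identity $\exp\circ T_{\ln c}=S_c\circ\exp$. One small refinement: your caveat $c\ge 1$ is not an extra assumption, because the hypothesis already forces it — $S_c$ must be a monotone self-map of $\mathbb{R}_+$ and the morphism $F(r\le c^2r)$ must exist.
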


The existence of an $\varepsilon$-interleaving provides a quantitative measure of similarity between two persistent objects, which leads to the definition of the interleaving distance.

\begin{definition}[Interleaving Distance]
The \textbf{interleaving distance} $d_I(F, G)$ between two persistent objects $F$ and $G$ is the infimum of all $\varepsilon \ge 0$ for which they are $\varepsilon$-interleaved:
\[
d_I(F,G) := \inf\{\varepsilon \ge 0 \mid F \text{ and } G \text{ are } \varepsilon\text{-interleaved}\}
\]
If no such $\varepsilon$ exists, the distance is defined to be $\infty$.
\end{definition}

Finally, we record that approximations are stable under natural isomorphisms.
This is implicit in the standard definitions, where the interleaving distance is defined on isomorphism classes of persistence modules~\cite{chazal2009proximity,lesnick2015theory,bubenik2015metrics} (i.e. naturally isomorphic functors have interleaving distance $0$).
We include a short proof since we need the explicit approximation maps when passing between nerves, co-nerves, and their pullbacks via Dowker duality.

\begin{lemma}
\label{lem:iso-transport}
Let $F, F', G: \poset \to \Cat{C}$ be persistent objects with 
$\phi: F \xleftrightarrow{c} G : \psi$.
If $\theta: F \Rightarrow F'$ is a natural isomorphism, then
$$F' \xleftrightarrow{c} G$$
via the composition maps $\phi' := \phi \circ \theta^{-1}$ and $\psi' := \theta \circ \psi$.
\end{lemma}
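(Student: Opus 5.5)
The plan is to check directly that the proposed maps $\phi' := \phi \circ \theta^{-1}$ and $\psi' := \theta \circ \psi$ satisfy Definition~\ref{def:c-approximation}. First the components must be read correctly, since $\theta^{-1}$ and $\theta$ act at different indices. The component of $\phi'$ at $r$ is
\[
\phi'_r = \phi_r \circ \theta_r^{-1} : F'(r) \to F(r) \to G(cr).
\]
The component of $\psi'$ at $r$ is
\[
\psi'_r = \theta_{cr}\circ \psi_r : G(r) \to F(cr) \to F'(cr).
\]
So $\psi'$ is the whiskered composite $(\theta S_c)\circ\psi$.

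\textbf{Naturality.} Since $\theta$ is a natural isomorphism, the inverse family $\theta^{-1}$ is also natural. One checks this by conjugating the naturality square $F'(r\le s)\circ\theta_r = \theta_s\circ F(r\le s)$ with $\theta_r^{-1}$ and $\theta_s^{-1}$. Whiskering $\theta$ by $S_c$ preserves naturality, because $S_c$ is monotone. Vertical composites of natural transformations are natural. Hence $\phi' : F' \Rightarrow G\circ S_c$ and $\psi' : G \Rightarrow F'\circ S_c$ are natural.

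\textbf{Coherence.} This is a short computation at each $r$. For the first triangle,
\[
\psi'_{cr}\circ\phi'_r = \theta_{c^2r}\circ\psi_{cr}\circ\phi_r\circ\theta_r^{-1} = \theta_{c^2r}\circ F(r\le c^2r)\circ\theta_r^{-1} = F'(r\le c^2r),
\]
where the last equality is naturality of $\theta$. For the second triangle,
\[
\phi'_{cr}\circ\psi'_r = \phi_{cr}\circ\theta_{cr}^{-1}\circ\theta_{cr}\circ\psi_r = \phi_{cr}\circ\psi_r = G(r\le c^2r).
\]

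\textbf{Main obstacle.} There is no real difficulty; the only care needed is the index bookkeeping above. In particular, one must use $\theta_{cr}$ rather than $\theta_r$ in $\psi'$, so that $\theta$ cancels in the second triangle and can be absorbed via naturality in the first.
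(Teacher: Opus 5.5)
Your proof is correct and follows the same route as the paper: you use the same components $\phi'_r=\phi_r\circ\theta_r^{-1}$ and $\psi'_r=\theta_{cr}\circ\psi_r$, get naturality from the naturality of $\theta^{-1}$ and $\phi$, and prove the first coherence triangle with the same computation, absorbing $\theta$ by naturality. You also write out the second triangle, where $\theta_{cr}^{-1}\circ\theta_{cr}$ cancels; the paper only says that case is ``verified similarly.''
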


\begin{proof}
Define maps $\phi'$ and $\psi'$ by composing with the isomorphism $\theta$.
\begin{equation*}
    \begin{tikzcd}[ampersand replacement=\&]
        F(r) \arrow[r, "\phi_r"] \arrow[d, "\theta_r"'] \& G(cr) \\
        F'(r) \arrow[ur, dashed, "\phi'_r"']
    \end{tikzcd}
    \qquad
    \phi'_r := \phi_r \circ \theta_r^{-1}
    \qquad \text{and} \qquad
    \psi'_r := \theta_{cr} \circ \psi_r
\end{equation*}

\textbf{Naturality of $\phi'$:} For $r \leq s$, the commutativity of the outer rectangle follows from the commutativity of the inner squares (naturality of $\theta^{-1}$ and $\phi$):
\[
\begin{tikzcd}[row sep=large, column sep=huge, ampersand replacement=\&]
    F'(r) \arrow[r, "\theta_r^{-1}"] \arrow[d, "F'(r \leq s)"'] 
    \& F(r) \arrow[r, "\phi_r"] \arrow[d, "F(r \leq s)"] 
    \& G(cr) \arrow[d, "G(cr \leq cs)"] \\
    F'(s) \arrow[r, "\theta_s^{-1}"'] 
    \& F(s) \arrow[r, "\phi_s"'] 
    \& G(cs)
\end{tikzcd}
\]
Algebraically:
\begin{align*}
\phi'_s \circ F'(r \leq s) 
&= \phi_s \circ \theta_s^{-1} \circ F'(r \leq s) \\
&= \phi_s \circ F(r \leq s) \circ \theta_r^{-1} 
\quad \text{($\theta^{-1}$ natural)} \\
&= G(cr \leq cs) \circ \phi_r \circ \theta_r^{-1} 
\quad \text{($\phi$ natural)} \\
&= G(cr \leq cs) \circ \phi'_r
\end{align*}
Naturality of $\psi'$ is similar.

\textbf{Coherence:} We show $\psi'_{cr} \circ \phi'_r = F'(r \leq c^2r)$. The diagram below traces the path from $F'(r)$ to $F'(c^2r)$. The central step reduces via the interleaving of $F$ and $G$, and the outer shell holds by the naturality of $\theta$.

\[
\begin{tikzcd}[column sep=large, row sep=large, ampersand replacement=\&]
    F'(r) \arrow[r, "\theta_r^{-1}"] \arrow[d, dashed, "F'(r \leq c^2r)"']
    \& F(r) \arrow[r, "\phi_r"] \arrow[d, "F(r \leq c^2r)" description]
    \& G(cr) \arrow[d, "\psi_{cr}"] \\
    F'(c^2r) 
    \& F(c^2r) \arrow[l, "\theta_{c^2r}"] 
    \& F(c^2r) \arrow[l, equal]
\end{tikzcd}
\]
\begin{align*}
\psi'_{cr} \circ \phi'_r 
&= (\theta_{c^2r} \circ \psi_{cr}) \circ (\phi_r \circ \theta_r^{-1}) \\
&= \theta_{c^2r} \circ (\psi_{cr} \circ \phi_r) \circ \theta_r^{-1} \\
&= \theta_{c^2r} \circ F(r \leq c^2r) \circ \theta_r^{-1} \quad \text{($\phi, \psi$ approximate)} \\
&= F'(r \leq c^2r) \quad \text{($\theta$ natural)}
\end{align*}
The second condition $\phi'_{cr} \circ \psi'_r = G(r \leq c^2r)$ is verified similarly.
\end{proof}

\subsection{Simplicial complexes and simplicial maps}
Given a vertex set $V$, a $k$-simplex is a subset of $k+1$ elements of $V$.
An (abstract) simplicial complex $K$ built from the vertex set $V$ is a set containing simplices of $V$ and closed under taking subsets.
More precisely if a $k$-simplex $\sigma$ is in $K$ then so must all its subsets.
A map $\phi: K\to L$ is called a \textit{simplicial map} if the image of every simplex $\sigma=\{v_1\dots,v_{k+1} \} \in K$, given by $\phi(\sigma)=\{\phi(v_1), \dots ,\phi(v_{k+1})\}$, is a simplex in $L$.
By definition, a simplicial map is completely determined by its effect on the vertices.

In the simplicial setting contiguity is the discrete analogue of homotopy.
Contiguous simplicial maps encode a notion of closeness, where $f(\sigma)$ and $g(\sigma)$ are both faces of a larger simplex $\tau$.
\begin{definition}[Contiguous Simplicial Maps~\mbox{\parencite[p. 62]{munkres2018elements}}]
    Given two simplicial maps $f,g:K\to L$, these maps are said to be \textbf{contiguous} if for each simplex $\sigma\in K$ the simplex $f(\sigma)\cup g(\sigma)$ forms a simplex in $L$
\end{definition}
If two simplicial maps are contiguous, they are chain homotopic~\cite[Theorem 12.5]{munkres2018elements} and induce the same homomorphism on homology~\cite[Theorem 12.4]{munkres2018elements}. 
We will establish (Section~\ref{sec:approximation_of_covers}) an analogous notion at the cover level, contiguity of refinement maps, and show that it implies contiguity of the induced simplicial maps.

\subsection{Dowker Complexes}

We now introduce the central objects of this work.
Dowker complexes provide a general mechanism for constructing simplicial complexes from binary relations.



\begin{definition}[Dowker Complexes]
Given a binary relation $R \subseteq X \times Y$, we define a simplicial complex $C(R)$ whose vertex set is $X$. A finite subset $\sigma \subseteq X$ forms a simplex in $C(R)$ if its elements share a common witness in $Y$:
$$C(R) = \{\sigma \subseteq X \mid \exists y \in Y \text{ such that } \sigma \times \{y\} \subseteq R\}.$$

From the same relation, we can create a second simplicial complex by considering the transpose relation $R^{op} \subseteq Y \times X$, where $(y,x) \in R^{op}$ if and only if $(x,y) \in R$. Applying the same construction to the transpose relation yields a complex $C(R^{op})$ whose vertex set is $Y$:
$$C(R^{op}) = \{\tau \subseteq Y \mid \exists x \in X \text{ such that } \tau \times \{x\} \subseteq R^{op}\}.$$

The two complexes built from $R$ and $R^{op}$ are fundamentally linked and are together called the Dowker Complexes. 
\end{definition}
Dowker's theorem states that $C(R)$ and $C(R^{op})$ have isomorphic homology groups.

\begin{theorem}[Dowker Duality~\cite{dowker1952homology}]
\label{thm:dowker}
Let $C(R)$ and $C(R^{op})$ be two simplicial complexes built as above, for each $k \in \mathbb{Z}_+$ we have isomorphisms:
$$ H_k C(R) \cong H_k C(R^{op})$$
\end{theorem}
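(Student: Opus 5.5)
We prove Dowker duality by passing through a third, symmetric complex and showing that both Dowker complexes are homotopy equivalent to it. Its homotopy type will be controlled by the classical nerve lemma for closed covers by subcomplexes, or equivalently by Quillen's fiber lemma. Since homotopy equivalences induce isomorphisms on homology in every degree $k$, this gives the statement.

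Let $K$ be the simplicial complex on the vertex set $R \subseteq X\times Y$. A finite set of pairs $\{(x_0,y_0),\dots,(x_n,y_n)\}$ is a simplex of $K$ if the rectangle $\{x_0,\dots,x_n\}\times\{y_0,\dots,y_n\}$ is contained in $R$. This is closed under subsets, so $K$ is a simplicial complex. The projection $p:K\to C(R)$, $(x,y)\mapsto x$, is simplicial, because any fixed $y_j$ witnesses the image simplex $\{x_0,\dots,x_n\}$. By symmetry, $q:K\to C(R^{op})$, $(x,y)\mapsto y$, is also simplicial.

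The core step is to show that $p$ induces a homotopy equivalence on geometric realizations. We use Quillen's fiber lemma on face posets. For a simplex $\sigma\in C(R)$, let $W_\sigma=\{y\in Y : \sigma\times\{y\}\subseteq R\}$, which is nonempty. The fiber $p^{-1}(\text{closed star of }\sigma)$, or more precisely the subcomplex of $K$ of simplices whose projection lies inside $\sigma$, is the complex of rectangles in $\sigma'\times W'$ with $\sigma'\subseteq\sigma$ and $W'\subseteq W_{\sigma'}$. We show it is contractible by a cone argument. Fix $y^*\in W_\sigma$. Then for every simplex $\tau$ of the fiber, $\tau\cup(\pi_X(\tau)\times\{y^*\})$ is again a rectangle in $R$, since $y^*$ witnesses all of $\sigma$. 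This gives a chain of poset maps $\tau\le \tau\cup(\pi_X\tau\times\{y^*\})\ge \pi_X\tau\times\{y^*\}$, and the last term lies in the full simplex on $\sigma\times\{y^*\}$, which is contractible. The Quillen homotopy property then contracts the fiber. Quillen's Theorem A, applied to the face poset map $p:\mathcal{F}(K)\to\mathcal{F}(C(R))$, yields that $|K|\simeq|C(R)|$. The same argument with the roles of $X$ and $Y$ exchanged gives $|K|\simeq|C(R^{op})|$. Composing the two equivalences gives $H_kC(R)\cong H_kC(R^{op})$ for all $k$.

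The main obstacle is handling the fiber correctly. The fiber over $\sigma$ in Quillen's sense is the subposet $\{\tau : \pi_X\tau\subseteq\sigma\}$. The rectangle condition involves all $x$-coordinates of $\tau$, so we must check that adding the column $\pi_X\tau\times\{y^*\}$ keeps us inside $R$ and inside the fiber. Both hold precisely because $y^*$ witnesses every $x\in\sigma\supseteq\pi_X\tau$.

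If we wanted to avoid Quillen's lemma, the alternative is purely homological. Cover $C(R)$ by the full simplices $\Delta(R^{op}(y))$ for $y\in Y$. Every finite intersection of these is either empty or a full simplex, hence acyclic. Its nerve is exactly $C(R^{op})$, and the acyclic-carrier/nerve theorem for simplicial covers gives the homology isomorphism directly.
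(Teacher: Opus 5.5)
Your proof is correct, and it is an independent route: the paper gives no argument for this statement. It imports the homology isomorphism from Dowker's 1952 paper, and points to Bj\"orner for the homotopy-equivalence strengthening and to Chowdhury--M\'emoli and Virk for the functorial versions.

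The key steps check out:
\begin{itemize}
\item The Quillen fiber over $\sigma$ is the preimage of the closed simplex on $\sigma$, not of the closed star as you first wrote; you correct this immediately.
\item The map $\tau\mapsto\tau\cup(\pi_X\tau\times\{y^*\})$ stays inside $K$ and inside the fiber precisely because $y^*$ witnesses every point of $\sigma\supseteq\pi_X\tau$.
\item The zigzag $\mathrm{id}\le f_1\ge f_2$, with $f_2$ bounded above by the constant map to $\sigma\times\{y^*\}$, contracts the fiber.
\item Quillen's fiber lemma holds for arbitrary posets, so infinite $X$ and infinite index sets (as for covers of a metric space) cause no trouble.
\end{itemize}

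Your route gives two things beyond the cited result. First, it yields a homotopy equivalence $|C(R)|\simeq|K|\simeq|C(R^{op})|$, not only isomorphisms on $H_k$.

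Second, and more relevant to this paper, the equivalence is a zigzag of explicit simplicial maps that is strictly natural. For a refinement $f:\mathcal V\to\mathcal U$, the vertex map $(x,j)\mapsto(x,f(j))$ is simplicial $K(R_{\mathcal V})\to K(R_{\mathcal U})$, since $A\times B\subseteq R_{\mathcal V}$ forces $A\times f(B)\subseteq R_{\mathcal U}$. It commutes on the nose with $p$, where it induces $\CoNerve(f)$, and with $q$, where it induces $\Nerve(f)$. Hence the isomorphisms $H_*(q)\circ H_*(p)^{-1}$ are natural with respect to refinement maps. This is exactly the naturality the paper borrows from Virk in the mixed case of Theorem~\ref{thm:main_propagation}, and wherever Dowker duality is chained with Lemma~\ref{lem:pullback_natural}.

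Your fallback paragraph is essentially Bj\"orner's nerve-theorem proof. It is shorter once the nerve theorem is granted, but the equivalence it yields is not induced by an explicit simplicial map, so naturality would take extra work.
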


A stronger form of the Dowker Theorem was proved by \citeauthor{bjorner1995topological}~\cite{bjorner1995topological} showing that the geometric realizations of the Dowker complexes are homotopy equivalent:  $\lvert C(R) \rvert \simeq \lvert C(R^{op})\rvert$.
This was then generalized as a specific case of the \textit{Functorial Dowker Theorem}~\cite{chowdhury2018functorial}.
Given two nested binary relations $S\subseteq R\subseteq X \times Y$ the Functorial Dowker Theorem states the there exist homotopy equivalences $\Gamma_S:\vert C(S)\rvert \to \lvert C(S^{op})\rvert$ and $\Gamma_R:\vert C(R)\rvert \to \lvert C(R^{op})\rvert$ such that following diagrams commutes up to homotopy:
\[
\begin{tikzcd}
\lvert C(S) \rvert \ar[r,hookrightarrow] \ar[d,"\Gamma_S"']
&
\lvert C(R)\rvert \ar[d,"\Gamma_{R}"]
\\
\lvert C(S^{op})\rvert \ar[r,hookrightarrow]
&
\lvert C(R^{op})\rvert
\end{tikzcd}
\]
\citeauthor{brun2019sparse}~\cite{brun2019sparse} organized this inclusion-based naturality into a 2-category of Dowker dissimilarities and show that these relation inclusions induce contiguous simplicial maps under the nerve~\cite[Lemma 6.4]{brun2019sparse}.

\subsection{Covers and Refinements}

\begin{definition}[Cover]
Given a set $X$, a \textbf{cover} of $X$ is a family of sets $\mathcal{U}=\{U_i\}_{i\in I}$ indexed over some set $I$ such that $X = \bigcup_{i\in I}U_i$.
\end{definition}

Throughout the work we consider $X$ as a metric space, with no extra structure.

\begin{definition}[Refinement]
Let $\mathcal{U} = \{U_i\}_{i \in I}$ and $\mathcal{V} = \{V_j\}_{j \in J}$ be two covers of a space $X$.  
A \textbf{refinement map} is a function $f : J \to I$ such that 
\[
V_j \subseteq U_{f(j)} \quad \text{for all } j \in J.
\]
If such a map exists, we say that $\mathcal{V}$ \emph{refines} $\mathcal{U}$ (or is a \emph{refinement} of $\mathcal{U}$).
\end{definition}

For notational convenience, we will often write $f : \mathcal{V} \to \mathcal{U}$ and interpret this as the refinement map induced by $f : J \to I$, so that $f(V_j) := U_{f(j)}$. 
In this way, the notation emphasizes that the map acts between covers, even though it formally acts on their index sets.

The set of covers of $X$ and the set of refinement maps between them define a category.

\begin{definition}[Category of Covers]
Let $\Cat{Cov}(X)$ be the category whose objects are covers of $X$ and the maps are refinement maps.
\end{definition}

\section{Approximation of Covers}
\label{sec:approximation_of_covers}
We now present the path to propagate approximations from covers to persistence modules.
The strategy rests on the fact that all refinement maps between two covers are contiguous, meaning that all diagrams of covers commute up to contiguity (Proposition~\ref{prop:all_refinements_contiguous}).
We then show that contiguity at the cover level becomes strict equality at the homology level (Theorem~\ref{thm:main_propagation}).

\subsection{The Nerve and Co-Nerve}
We start with a cover $\mathcal{U} = \{U_i\}_{i\in I} \in \Cat{Cov}(X)$ we associate to it a binary relation encoding set membership.
\begin{definition}[Inclusion Relation]
Given a cover $\mathcal{U} = \{U_i\}_{i\in I}$ of a space $X$, indexed by a set $I$ we consider the binary relation $R_\mathcal U:X\times I$ where: $$(x,i) \in R_\mathcal U\iff x \in U_i$$
\end{definition}
This relation is the bridge between covers and simplicial complexes: the two Dowker complexes of $R_{\mathcal{U}}$ yield two different simplicial complexes from the same cover.
The $X$-side complex consists of subsets of the space that are contained in a common cover element; the $I$-side complex consists of collections of cover elements with non-empty common intersection.
We will call these complexes the Co-Nerve and the Nerve

\begin{definition}[Co-Nerve]
The \textbf{co-nerve} of a cover $\mathcal{U}=\{U_i\}_{i\in I}$ is the simplicial complex:
\begin{align*}
    \CoNerve(\mathcal{U}) &= \{\sigma \subseteq X \mid \exists i\in I \quad \sigma \times \{i\} \subset R\}\\
        &=\{\sigma \subseteq X \mid \exists U_i\in\mathcal{U} \quad \sigma\subseteq U_i\}\\
        &=C(R_\mathcal U)\\
\end{align*}
\end{definition}

\begin{definition}[Nerve]
The \textbf{nerve} of a cover $\mathcal{U}=\{U_i\}_{i\in I}$ is the simplicial complex:
\begin{align*}
    \Nerve(\mathcal{U})&= \{\sigma \subseteq I \mid \exists x\in X \quad \{x\} \times \sigma \subset R\}\\
               &= \{\sigma \subseteq I  \mid \bigcap_{i\in \sigma} U_i \neq \emptyset \}\\
               &= C(R^{op}_\mathcal U)
\end{align*}
\end{definition}

The Dowker Duality~\ref{thm:dowker} guarantees an isomorphism at the homology level:
$$
H_*\Nerve(\mathcal{U})  \cong H_* \CoNerve(\mathcal{U})
$$ 
\begin{remark}[Co-nerve naming and notation]
While some authors refer to the first construction (co-nerve) as the \textit{Vietoris complex of a cover}~\cite{begle1950bicompact,hatcher2002algebraic,virk2019rips} or the \textit{Vietoris Nerve}~\cite{edwards1980cech}
we prefer the term \textit{Co-nerve}. 
The Co-Nerve, like the Nerve, is best viewed as a functor from covers to simplicial complexes.
The Vietoris-Rips complex is the Co-Nerve of a \textit{specific cover} just as the \v Cech complex is the Nerve of the ball cover~\cite{edelsbrunner2010computational,chazal2009gromov,chazal2021introduction,cavanna2015geometric}.
Not every Nerve is a \v Cech Complex and not every Co-Nerve results in the Vietoris-Rips complex. In fact, as we will show, the properties of both simplicial complexes come from their underlying covers.

The Dowker Theorem provides an initial argument to this, the fact that two different simplicial complexes built on the same underlying cover are homotopy equivalent is an early indication that the topological information is determined with their common element: the cover.
Therefore the naming helps to distinguish the role of the simplicial constructions from the role of the covers.
\end{remark}

\subsection{Persistent Covers}

As is common in persistent homology we are interested in sequences of covers of $X$ indexed by some parameter, with the constraint that each cover refines the next one.

\begin{definition}[Persistent Cover]
A \textbf{persistent cover} is a functor ${\mathcal{U}: (\mathbb{R}_{+}, \leq) \to \Cat{Cov}(X)}$. For each $r \in \mathbb{R}_{+}$, we have a cover $\mathcal{U}(r)$ of $X$, and for $r \leq s$, we have a refinement map $\mathcal{U}(r \leq s): \mathcal{U}(r) \to \mathcal{U}(s)$.
\end{definition}

A persistent cover is a functor $\mathcal{U} :\poset \to \Cat{Cov}(X)$: at each scale we have a cover, and as the scale increases the covers are related by refinement maps.
Since the objects $\mathcal{U}\in \Cat{Cov}(X)$ are subsets of $X$, this functor is generally known as a persistent set~\cite{carlsson10acharacterization,cardona2022universalellpmetricmergetrees}.
However we want to emphasize that they have extra constraints (the union must be equal to $X$).

\citeauthor{virk2019rips}~\cite{virk2019rips} further expands the \textit{Functorial Dowker Theorem}~\cite{chowdhury2018functorial} in a different direction, showing that the Dowker isomorphisms are natural with respect to cover refinements.
A refinement map is not in general an inclusion of relations, so naturality at this level extends beyond inclusion-based results~\cite{chowdhury2018functorial,brun2019sparse}.
Given a refinement $f:\mathcal{V}\to \mathcal U$, there exists homotopy equivalences $\gamma_U$ and $\gamma_V$ for which the following diagram commutes up to homotopy:
$$
\begin{tikzcd}
\CoNerve(\mathcal{V}) \arrow[r,hook, "\CoNerve(f)"'] \arrow[d, "\simeq","\gamma_V"'] & 
\CoNerve(\mathcal{U}) \arrow[d, "\simeq"', "\gamma_U"] \\
\Nerve(\mathcal{V})\arrow[r, "\Nerve(f)"] & 
\Nerve(\mathcal{U})
\end{tikzcd}
$$
Which means that given a persistent cover $\mathcal{U}$, the Dowker isomorphisms
$$\theta_r: H_*\Nerve\;\mathcal{U}(r) \xrightarrow{\cong} H_*\CoNerve\;\mathcal{U}(r)$$
are natural to the persistence structure.
That is, for $r\leq s$, the following diagram commutes:
$$
\begin{tikzcd}
H_*\CoNerve\;\mathcal{U}(r) \arrow[r] \arrow[d, "\theta_r"'] & 
H_*\CoNerve\;\mathcal{U}(s) \arrow[d, "\theta_s"'] \\
H_*\Nerve\;\mathcal{U}(r)\arrow[r] & 
H_*\Nerve\;\mathcal{U}(s) 
\end{tikzcd}
$$

\subsection{Contiguity of cover refinements}
Having defined the Nerve and Co-Nerve as functors out of $\Cat{Cov}(X)$, we now examine how they act on refinement maps.

\begin{lemma}
\label{lem:refinement-inclusion}
Let $f:\mathcal{V} \to \mathcal{U}$ be a refinement map.
\begin{enumerate}
  \item The map on nerves induced by $f$,
  \[
    \Nerve(f)\colon \Nerve(\mathcal V)\longrightarrow \Nerve(\mathcal U),
  \] is an inclusion of simplicial complexes \emph{if and only if} $f$ is injective.
  \item  The map on co-nerves induced by $f$,
  \[
    \CoNerve(f)\colon \CoNerve(\mathcal V)\longrightarrow \CoNerve(\mathcal U)
  \]
  is always an inclusion $\CoNerve(\mathcal V)\subseteq\CoNerve(\mathcal U)$.
\end{enumerate}
\end{lemma}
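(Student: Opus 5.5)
First I will pin down what the induced maps are, because the lemma is really a statement about their vertex maps. For the Nerve, $\Nerve(f)$ has vertex map $f\colon J\to I$ on index sets. It is simplicial: if $\sigma\subseteq J$ satisfies $\bigcap_{j\in\sigma}V_j\neq\emptyset$, take $x$ in that intersection. Then $x\in V_j\subseteq U_{f(j)}$ for every $j\in\sigma$, so $x\in\bigcap_{i\in f(\sigma)}U_i$ and $f(\sigma)\in\Nerve(\mathcal U)$.

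For the Co-Nerve, the vertex set of both complexes is $X$, and $\CoNerve(f)$ acts on vertices by the identity. It is simplicial: if $\sigma\subseteq V_j$ for some $j$, then $\sigma\subseteq U_{f(j)}$, so $\sigma\in\CoNerve(\mathcal U)$. This gives part 2 directly. The identity on vertices sends every simplex to itself, so $\CoNerve(\mathcal V)\subseteq\CoNerve(\mathcal U)$ as sets of simplices and the map is the inclusion. It does not matter whether $f$ is injective.

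For part 1, I read "inclusion of simplicial complexes" as: after identifying vertices along the vertex map, the map is an injective simplicial map, i.e. an isomorphism onto a subcomplex. I will prove the two directions separately.

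\emph{Injective $\Rightarrow$ inclusion.} Identify $J$ with $f(J)\subseteq I$. Each simplex $\sigma$ is carried bijectively onto $f(\sigma)$, which is a simplex of $\Nerve(\mathcal U)$ by the argument above. Distinct simplices have distinct images, so $\Nerve(f)$ embeds $\Nerve(\mathcal V)$ as a subcomplex of $\Nerve(\mathcal U)$.

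\emph{Not injective $\Rightarrow$ not an inclusion.} Suppose $f(j)=f(j')$ with $j\neq j'$. Then the vertex map already identifies two vertices, so $\Nerve(f)$ is not injective on vertices and cannot be an inclusion. If the edge $\{j,j'\}$ lies in $\Nerve(\mathcal V)$, it even collapses to a single vertex. Either way injectivity on vertices fails.

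The only real obstacle is this matter of convention. Because the index sets $J$ and $I$ are distinct, "inclusion" in part 1 only makes sense up to the identification given by $f$, and I would state that reading explicitly. Once that is fixed, every step above is a one-line check.
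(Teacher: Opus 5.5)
Your proof is correct and follows the paper's argument. Part 2 is the same chain $\sigma\subseteq V_j\subseteq U_{f(j)}$, and part 1 likewise reduces to the fact that a simplicial map is injective exactly when its vertex map, here $f$ itself, is injective; your checks that $\Nerve(f)$ is simplicial and that an injective simplicial map is an isomorphism onto its image just spell out what the paper leaves implicit. One caveat applies to both proofs: treating all of $J$ as the vertex set of $\Nerve(\mathcal V)$ tacitly assumes every $V_j$ is nonempty, because an empty $V_j$ is not a vertex, so $f$ can fail to be injective on such an index while $\Nerve(f)$ is still an embedding.
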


\begin{proof}
(1)
A simplicial map is injective if and only if it is injective on its set of vertices. 
By the construction of the Nerve functor, the vertex set of $\Nerve(\mathcal{V})$ is the index set $I$, and the action of $\Nerve(f)$ on this vertex set is given precisely by the function $f: \mathcal{V} \to \mathcal{U}$. 
Thus, $\Nerve(f)$ is an inclusion map if and only if the function $f$ is injective.

(2)
Consider a simplex $\sigma \in \CoNerve(\mathcal{V})$.
By definition there exists some $V \in \mathcal{V}$ such that $\sigma \subseteq V$.
Since $\mathcal{V}$ refines $\mathcal{U}$, there is some set $f(V)\in \mathcal{U}$ such that $V\subseteq f(V)$.
Which implies that $\sigma \subseteq V \subseteq f(V)$.
The existence of a set in $\mathcal{U}$ containing $\sigma$ means that $\sigma \in \CoNerve(\mathcal{U})$.
\end{proof}
The asymmetry in this lemma has a direct consequence: the Co-Nerve will always produce a nested filtration while the Nerve may not.

Given a refinement map $f:\mathcal{V}\to \mathcal{U}$, often a set $V_j$ is contained in \textit{multiple} sets in $\mathcal U$. 
Which makes the refinement map not unique and one has to make a choice (Fig. \ref{fig:contiguous_maps}).
However these different possible refinement maps are ``close'' in some sense and become identical linear maps at the homology level: they are contiguous.

\begin{figure}[htb]
    \centering
    \begin{tikzpicture}[
  blob/.style={
    circle, 
    fill=teal, 
    opacity=0.2, 
    inner sep=0pt, 
    minimum width=#1*2cm
  },
  set_label/.style={
    font=\sffamily\large,
    text=black,
    inner sep=2pt
  },
  map_arrow/.style={
    ->, 
    >={LaTeX[width=2mm,length=2mm]}, 
    draw=gray!100, 
    thick, 
    shorten >= 3pt, 
    shorten <= 3pt
  },
  split_arrow/.style={
    map_arrow,
    draw=redd, 
    dashed
  }
]

  \definecolor{teal}{rgb}{0.078,0.396,0.365}
  \definecolor{redd}{rgb}{0.902,0.224,0.278}

  \begin{scope}[xshift=2cm, local bounding box=LeftSet]
    \node[blob=0.589, label={[set_label]160:$V_1$}] (s1) at (2.500,3.500) {};
    
    \node[blob=0.582, label={[set_label]15:$V_2$}] (s2) at (3.200,3.200) {};
    
    \node[blob=0.721, label={[set_label]190:$V_3$}] (s3) at (3.500,2.400) {};
  \end{scope}

  \begin{scope}[xshift=7cm, local bounding box=RightSet]
    \node[blob=0.984, label={[set_label]35:$U_1$}] (b1) at (2.933,3.333) {};
    
    \node[blob=1.092, label={[set_label]345:$U_2$}] (b2) at (3.333,2.833) {};
  \end{scope}

  
  \draw[map_arrow] (s1.north) to[out=0, in=170] (b1.165);

  \draw[map_arrow] (s3.east) to[out=350, in=180] (b2.210);

  \draw[split_arrow] (s2.east) to (b1.165);
  \draw[split_arrow] (s2.east) to (b2.210);

\end{tikzpicture}
    \caption{Example of two contiguous refinement maps $f,g:\mathcal V \to \mathcal U$ defined as $f(V_1)=f(V_2)=U_1$, $f(V_3)=U_2$ and $g(V_1)=U_1$, $g(V_2)=g(V_3)=U_2$}
    \label{fig:contiguous_maps}
\end{figure}

\begin{definition}[Contiguous Refinement Maps]
Let $\mathcal{U} = \{U_i\}_{i \in I}$ and $\mathcal{V} = \{V_j\}_{j \in J}$ be two covers of a space $X$ with $\mathcal{V}<\mathcal{U}$.
Two refinement maps $f, g: J \to I$ are said to be \textbf{contiguous} if for every $j \in J$, the images intersect:
\[
U_{f(j)} \cap U_{g(j)} \neq \emptyset
\]
\end{definition}
Intuitively this gives a notion of closeness to two refinements maps $f$ and $g$, which may assign different indices but correspond to cover elements that intersect.
Simplicial maps induced by contiguous refinement maps are also going to be contiguous.

\begin{lemma}[Homology Invariance]
\label{lem:homology_invariance_contiguity}
    Let $f,g:\mathcal V\to\mathcal{U}$ be contiguous refinement maps.
    Then they induce the same linear maps on homology for both the Nerve and the Co-Nerve:
    \begin{align*}
        H(\Nerve(f))&=H(\Nerve(g)) \tag{1}\\
         H(\CoNerve(f))&=H(\CoNerve(g)) \tag{2}\\
    \end{align*}
\end{lemma}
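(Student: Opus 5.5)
The plan is to reduce both statements to the simplicial notion of contiguity (Munkres' definition recalled above) and then invoke the fact that contiguous simplicial maps induce the same homomorphism on homology~\cite[Theorem 12.4]{munkres2018elements}. So for each of the two functors I will show that $F(f)$ and $F(g)$ are contiguous simplicial maps, or even equal, for $F\in\{\Nerve,\CoNerve\}$.

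\textbf{Co-Nerve (2).} This case is immediate from Lemma~\ref{lem:refinement-inclusion}(2). The vertex set of both $\CoNerve(\mathcal V)$ and $\CoNerve(\mathcal U)$ is $X$ itself. The induced map $\CoNerve(f)$ is the inclusion $\CoNerve(\mathcal V)\subseteq\CoNerve(\mathcal U)$, which does not depend on the choice of $f$, and the same holds for $g$. Hence $\CoNerve(f)=\CoNerve(g)$ as simplicial maps, and in particular their maps on homology agree. No contiguity hypothesis is used here.

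\textbf{Nerve (1).} Take a simplex $\sigma\subseteq J$ of $\Nerve(\mathcal V)$. By definition there is a point $x\in\bigcap_{j\in\sigma}V_j$. I must show that $f(\sigma)\cup g(\sigma)$ is a simplex of $\Nerve(\mathcal U)$, i.e.\ that the sets $U_{f(j)},U_{g(j)}$ for $j\in\sigma$ have a common point. The witness $x$ works: for every $j\in\sigma$ we have $x\in V_j\subseteq U_{f(j)}$ and $x\in V_j\subseteq U_{g(j)}$, by the refinement property of $f$ and $g$. Thus
\[
x\in\bigcap_{j\in\sigma}\bigl(U_{f(j)}\cap U_{g(j)}\bigr),
\]
so $f(\sigma)\cup g(\sigma)\in\Nerve(\mathcal U)$. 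Therefore $\Nerve(f)$ and $\Nerve(g)$ are contiguous simplicial maps, and Munkres' Theorem~12.4 (via the chain homotopy of Theorem~12.5) gives $H(\Nerve(f))=H(\Nerve(g))$.

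I do not expect a genuine obstacle. The one point to flag is that the argument uses only the common witness $x$ and the refinement inclusions, so the pairwise intersection hypothesis $U_{f(j)}\cap U_{g(j)}\neq\emptyset$ is automatically satisfied and not really needed. This is exactly what will yield Proposition~\ref{prop:all_refinements_contiguous}: any two refinement maps between the same covers are contiguous. The remaining care is bookkeeping: the Nerve acts on index sets, so simplices are subsets of $J$ mapped to subsets of $I$, and possible non-injectivity of $f$ or $g$ only shrinks $f(\sigma)\cup g(\sigma)$ and causes no issue.
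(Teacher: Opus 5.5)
Your proposal is correct and follows the paper's proof. For the Nerve, a common witness $x\in\bigcap_{j\in\sigma}V_j$ lies in every $U_{f(j)}$ and $U_{g(j)}$, so $\Nerve(f)$ and $\Nerve(g)$ are contiguous. For the Co-Nerve, both induced maps are the same inclusion on $X$. Your remark that the hypothesis $U_{f(j)}\cap U_{g(j)}\neq\emptyset$ is never actually used also matches the paper, whose argument likewise relies only on the refinement inclusions.
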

\begin{proof}
(1). \textit{Nerve:} We will show that contiguous refinement maps induce contiguous simplicial maps\footnote{A proof of this step can also be found in \cite[Lemma 2.1]{dey2016multiscale}. In the context of the Mapper algorithm the authors prove that any two refinement maps between covers induce contiguous simplicial maps between their nerves.}.
Let $\sigma=\{V_1, \dots, V_k\} \in \Nerve(\mathcal{V})$.
By definition, this means that $\cap_{j=1}^k V_j\neq \emptyset$.
Let $x \in \cap_{j=1}^k V_j$  be an element in the intersection.

Since $f$ and $g$ are refinement maps, we have the inclusions $V_j \subseteq U_{f(j)}$ and $V_j \subseteq U_{g(j)}$ for all $j=1,\dots,k$.
Consequently, the point $x$ is contained in every set in the image of the simplex under both maps:
\[
x \in \left( \bigcap_{j=1}^k U_{f(j)} \right) \cap \left( \bigcap_{j=1}^k U_{g(j)} \right).
\]
Thus, the set of vertices $f(\sigma) \cup g(\sigma)$  has a non-empty common intersection (containing at least $x$).
By the definition of the Nerve, this means $f(\sigma) \cup g(\sigma)$ forms a simplex in $\Nerve(\mathcal{U})$.
The induced simplicial maps $\Nerve(f)$ and $\Nerve(g)$ are contiguous and therefore they induce the same homomorphism on homology.

(2). \textit{Co-Nerve:} A refinement map $f: \mathcal{V} \to \mathcal{U}$ acts on the index sets of the covers, but induces the identity map (or inclusion) on the underlying vertex set $X$.
Thus both induced maps $\CoNerve(f)$ and $\CoNerve(g)$ act as the identity on the vertices.
Therefore, as simplicial maps, $\CoNerve(f) = \CoNerve(g)$, which trivially implies they induce the same linear maps on homology.
\end{proof}

Contiguous maps help us relax the notion of commutativity at the cover level without compromising it at the homology level.
Given three refinement maps $f,g,h$ it is often easier to check contiguity ($f\circ g\sim h$) than equality ($f\circ g=h$).
While this allows us to side-step a strict equality check, we \emph{still} need to confirm contiguity.
However one can show that all refinement maps between two given covers are contiguous.
Which means that we don't even need to check for contiguity, \emph{existence alone guarantees contiguity}, which in turn guarantees equality at homology level.

\begin{proposition}
\label{prop:all_refinements_contiguous}
Let $\mathcal{U} = \{U_i\}_{i \in I}$ and $\mathcal{V} = \{V_j\}_{j \in J}$ be two covers of a space $X$.
Any two refinement maps $f,g:\mathcal{V}\to \mathcal U$ are contiguous.
\end{proposition}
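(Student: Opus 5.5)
The plan is to unwind the definition of contiguous refinement maps and check it pointwise for each index $j \in J$. By the definition, I must show that for every $j \in J$ the cover elements $U_{f(j)}$ and $U_{g(j)}$ have non-empty intersection. The natural witness is any point of $V_j$ itself: since both $f$ and $g$ are refinement maps, we have $V_j \subseteq U_{f(j)}$ and $V_j \subseteq U_{g(j)}$, hence
\[
V_j \subseteq U_{f(j)} \cap U_{g(j)}.
\]
So as soon as $V_j \neq \emptyset$, any $x \in V_j$ lies in the intersection, and contiguity at $j$ follows immediately.

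The only step that needs care, and the one I expect to be the main obstacle, is the degenerate case $V_j = \emptyset$. The definition of a cover only requires $\bigcup_{j} V_j = X$; it does not forbid empty members. If $V_j$ is empty, then $V_j \subseteq U_i$ for every $i$, so $f(j)$ and $g(j)$ can be arbitrary, and two disjoint elements of $\mathcal{U}$ would violate the literal contiguity condition. I would handle this in one of two ways. First, I would adopt the standing convention, implicit in the Nerve and Co-Nerve constructions, that covers consist of non-empty sets, and state it explicitly at this point. Alternatively, I would note that an empty $V_j$ contributes no vertex to $\Nerve(\mathcal V)$, since every simplex $\sigma \subseteq J$ there satisfies $\bigcap_{j\in\sigma}V_j\neq\emptyset$. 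It also contributes no simplex to $\CoNerve(\mathcal V)$ beyond the empty one. So the induced simplicial maps are unaffected and Lemma~\ref{lem:homology_invariance_contiguity} still applies. With either convention, contiguity holds for all relevant $j$.

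Finally, I would remark on the consequence. Combined with Lemma~\ref{lem:homology_invariance_contiguity}, any two refinement maps $\mathcal V\to\mathcal U$ induce the same map on $H_*\Nerve$ and on $H_*\CoNerve$. This is precisely the statement that diagrams of covers commute up to contiguity, which the subsequent propagation theorem uses.
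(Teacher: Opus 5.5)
Your proof is correct and is the paper's argument: both rest on $V_j \subseteq U_{f(j)} \cap U_{g(j)}$ for every $j \in J$. Your treatment of an empty $V_j$ is a worthwhile addition, since the paper's one-line proof tacitly assumes $V_j \neq \emptyset$ (with empty members allowed, the literal statement can fail), and either of your fixes covers every use in the paper: requiring non-empty cover members, or noting that an empty $V_j$ contributes nothing to $\Nerve(\mathcal V)$ or $\CoNerve(\mathcal V)$.
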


\begin{proof}
    Let $V_j\in \mathcal V$, since $f$ and $g$ are refinement maps there exist $U_{f(j)}\in \mathcal{U}$ and $U_{g(j)}\in \mathcal{U}$ such that $V_j \subseteq U_{f(j)}$ and $V_j \subseteq U_{g(j)}$ and therefore $V_j\subseteq U_{f(j)} \cap U_{g(j)}$. Therefore $f,g$ are contiguous.
\end{proof}

Consequently any diagram of persistent covers consisting exclusively of refinement maps commutes up to contiguity.

\subsection{Approximation up to Contiguity}
We have shown that any diagram of persistent covers consisting exclusively of refinement maps commutes up to contiguity.
This in turn allows us to relax the strict commutativity of interleavings and approximations to commutativity up to contiguity, which the cover level reduces to a simple existence check.
We now redefine the notion of approximation to accommodate this flexibility.

\begin{definition}[$c$-Approximation up to Contiguity]\label{def:c-approximation-contiguity}Let $\mathcal{U}, \mathcal{V}: (\mathbb{R}_+, \leq) \to \mathbf{Cov}(X)$ be two persistent covers.
We say that $\mathcal{U}$ and $\mathcal{V}$ are \textbf{$c$-approximated up to contiguity} if there exist families of refinement maps:
$$\phi_r: \mathcal{U}(r) \to \mathcal{V}(cr) \quad \text{and} \quad \psi_r: \mathcal{V}(r) \to \mathcal{U}(cr)$$

\end{definition}

This definition plays along well with the Proposition \ref{prop:all_refinements_contiguous}.
Given two persistent covers $\mathcal{U}$ and $\mathcal{V}$ it is enough to show the \textit{existence} of the families of refinements as described in Definition \ref{def:c-approximation-contiguity} in order to have the contiguity conditions automatically satisfied.


\begin{theorem}[Propagation of Approximations]
\label{thm:main_propagation}
Let $\mathcal{U}$ and $\mathcal{V}$ be persistent covers that are c-approximated up to contiguity.
Then the following persistence modules are all c-approximated:
\begin{align}
H_*\Nerve\;\mathcal{U} &\overset{c}{\longleftrightarrow} H_*\Nerve\;\mathcal{V} \label{eq:nrv_nrv}\tag{1}\\
H_*\CoNerve\;\mathcal{U} &\overset{c}{\longleftrightarrow} H_*\CoNerve\;\mathcal{V} \label{eq:conrv_conrv}\tag{2}\\
H_*\Nerve\;\mathcal{U} &\overset{c}{\longleftrightarrow} H_*\CoNerve\;\mathcal{V} \label{eq:mixed}\tag{3}
\end{align}
\end{theorem}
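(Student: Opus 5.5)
The plan is to apply the functor $H_*\circ F$, for $F\in\{\Nerve,\CoNerve\}$, to the families of refinement maps $\phi_r:\mathcal{U}(r)\to\mathcal{V}(cr)$ and $\psi_r:\mathcal{V}(r)\to\mathcal{U}(cr)$ given by Definition~\ref{def:c-approximation-contiguity}. We then verify naturality and the two coherence identities of Definition~\ref{def:c-approximation} at the homology level. Items (1) and (2) are proved in exactly the same way. Item (3) then follows from (1) together with Dowker duality and Lemma~\ref{lem:iso-transport}.

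\textbf{Naturality.} Fix $r\le s$. There are two composite refinement maps $\mathcal{U}(r)\to\mathcal{V}(cs)$, namely $\mathcal{V}(cr\le cs)\circ\phi_r$ and $\phi_s\circ\mathcal{U}(r\le s)$. Compositions of refinement maps are refinement maps, so both are refinement maps between the same pair of covers. By Proposition~\ref{prop:all_refinements_contiguous} they are contiguous. Lemma~\ref{lem:homology_invariance_contiguity} then gives
\[
H_*F(\mathcal{V}(cr\le cs))\circ H_*F(\phi_r)=H_*F(\phi_s)\circ H_*F(\mathcal{U}(r\le s)),
\]
using functoriality of $F$ and $H_*$ to split the composites. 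So $H_*F(\phi)$ is a natural transformation $H_*F\mathcal{U}\Rightarrow H_*F\mathcal{V}\circ S_c$, and the same argument applies to $\psi$.

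\textbf{Coherence.} The maps $\psi_{cr}\circ\phi_r$ and $\mathcal{U}(r\le c^2r)$ are both refinement maps $\mathcal{U}(r)\to\mathcal{U}(c^2r)$. The same two results therefore give $H_*F(\psi_{cr})\circ H_*F(\phi_r)=H_*F\mathcal{U}(r\le c^2r)$, and symmetrically for $\mathcal{V}$. This proves (1) and (2).

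\textbf{Mixed case.} For (3), start from the approximation $H_*\Nerve\,\mathcal{U}\overset{c}{\longleftrightarrow}H_*\Nerve\,\mathcal{V}$ given by (1). Apply Lemma~\ref{lem:iso-transport} on the $\mathcal{V}$ side, using the Dowker isomorphisms $\theta_r:H_*\Nerve\,\mathcal{V}(r)\to H_*\CoNerve\,\mathcal{V}(r)$. The lemma is stated for transport on the first argument, but approximation is symmetric in its two arguments, so this is harmless.

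\textbf{Main obstacle.} The nontrivial input is that $\theta$ is a \emph{natural} isomorphism of persistence modules, i.e.\ it commutes with the maps induced by the structure refinements $\mathcal{V}(r\le s)$. This is the naturality of the Dowker isomorphism with respect to cover refinements~\cite{virk2019rips} recalled above, so I would cite it rather than reprove it. One point to check is that this naturality holds for an arbitrary refinement map, not only for a specific choice. That follows again from Proposition~\ref{prop:all_refinements_contiguous} and Lemma~\ref{lem:homology_invariance_contiguity}, since every choice induces the same map on homology. With that in hand, the transported maps $\theta\circ H_*\Nerve(\psi)$ and $H_*\Nerve(\phi)\circ\theta^{-1}$ satisfy the coherence identities by Lemma~\ref{lem:iso-transport}.
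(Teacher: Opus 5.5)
Your proposal is correct and follows the paper's proof essentially step for step. For items (1) and (2), any two parallel composites of refinement maps are contiguous (Proposition~\ref{prop:all_refinements_contiguous}), and Lemma~\ref{lem:homology_invariance_contiguity} turns this into strict equalities on homology; for item (3), item (1) is transported along the natural Dowker isomorphism on the $\mathcal{V}$ side via Lemma~\ref{lem:iso-transport}. Your remarks on applying that lemma to the second argument by symmetry, and on why the naturality of $\theta$ does not depend on the choice of refinement maps, make explicit two points the paper leaves implicit.
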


\begin{proof}
Since $\mathcal{U}$ and $\mathcal V$ are persistent covers then for $r\leq s$ there exist refinement maps $\mathcal U(r)\to \mathcal U(s)$ and $\mathcal V(r)\to \mathcal V(s)$.
By assumption $\mathcal U$ and $\mathcal V$ are approximated up to contiguity so there exists two families of refinement maps:
$$\phi_r: \mathcal{U}(r) \to \mathcal{V}(cr) \quad \text{and} \quad \psi_r: \mathcal{V}(r) \to \mathcal{U}(cr)\;.$$
By Proposition \ref{prop:all_refinements_contiguous} the following diagrams commute up to contiguity:
    \[
    \begin{tikzcd}
        \mathcal{U}(s) \arrow[r] \arrow[d, "\phi_s"'] & \mathcal{U}(t) \arrow[d, "\phi_t"] \\
        \mathcal V(cs) \arrow[r] & \mathcal V(ct)
    \end{tikzcd}
    \qquad \qquad
    \begin{tikzcd}
        \mathcal V(s) \arrow[r] \arrow[d, "\psi_s"'] & \mathcal V(t) \arrow[d, "\psi_t"] \\
        \mathcal{U}(cs) \arrow[r] & \mathcal{U}(ct)
    \end{tikzcd}
    \]
    
    \[
    \begin{tikzcd}[column sep=tiny]
        \mathcal{U}(t) \arrow[rr] \arrow[dr, "\phi_t"'] & & \mathcal{U}(c^2t) \\
        & \mathcal V(ct) \arrow[ur, "\psi_{ct}"'] &
    \end{tikzcd}
    \qquad \qquad
    \begin{tikzcd}[column sep=tiny]
        \mathcal V(t) \arrow[rr] \arrow[dr, "\psi_t"'] & & \mathcal V(c^2t) \\
        & \mathcal{U}(ct) \arrow[ur, "\phi_{ct}"'] &
    \end{tikzcd}
    \]
\textbf{1. Nerve and Co-Nerve (\ref{eq:nrv_nrv} and \ref{eq:conrv_conrv}):}
Let $F$ denote either the $H_*\Nerve(-)$ or $H_*\CoNerve(-)$ functor.
We construct the algebraic approximation by applying $F$ to the families of refinement maps $\phi$ and $\psi$.

The previous diagrams for $\phi$ and $\psi$ commute up to contiguity.
Lemma \ref{lem:homology_invariance_contiguity} establishes that contiguous refinement maps induce identical linear maps on homology.
Therefore, applying $F$ converts the contiguity relations directly into strict equalities:
$$
     F(\phi_s \circ \mathcal{U}(r \le s))= F(\mathcal{V}(cr \le cs) \circ \phi_r)
$$
and likewise:
\begin{align*}
F(\psi_{cr} \circ \phi_r)&=F(\mathcal{U}(r\leq c^2r))\;,\\
 F(\phi_{cr} \circ \psi_r )&=F( \mathcal{V}(r \leq c^2r))\;.
\end{align*}
Thus, the persistence modules are $c$-approximated.

\textbf{2. Mixed Case (\ref{eq:mixed}):}
We rely on the Functorial Dowker Theorem \cite{virk2019rips}, which provides a natural isomorphism of persistence modules $\theta: H_*\Nerve(\mathcal{V}) \xrightarrow{\cong} H_*\CoNerve(\mathcal{V})$.
From step 1, we established the approximation $H_*\Nerve(\mathcal{U}) \xleftrightarrow{c}H_*\Nerve(\mathcal{V})$.

Since approximation is transitive with respect to natural isomorphism (Lemma \ref{lem:iso-transport}), composing the approximation maps with the isomorphism $\theta$ yields the approximation $H_*\Nerve(\mathcal{U}) \xleftrightarrow{c} H_*\CoNerve(\mathcal{V})$.
\end{proof}

This theorem establishes that approximations between covers automatically induce approximations between all associated persistence modules, regardless of whether we use nerve or co-nerve constructions.

\begin{remark}
Our setting combines two existing relaxations of the standard interleaving.
\citeauthor{blumberg2022universalityhomotopyinterleavingdistance}~\cite{blumberg2022universalityhomotopyinterleavingdistance} introduced \emph{homotopy interleavings}, in which the coherence diagrams
commute up to homotopy and descend to strict algebraic interleavings under $H_*$.
\citeauthor{desilva2018theoryinterleavingscategoriesflow}~\cite{desilva2018theoryinterleavingscategoriesflow} generalized strict interleavings to arbitrary categories equipped with a \emph{flow}.
Our framework sits at the intersection: $\Cat{Cov}(X)$ carries a refinement flow, and Definition~\ref{def:c-approximation-contiguity} is a relaxed interleaving in this flow, with contiguity playing the role of homotopy.

A more abstract perspective would equip $\Cat{Cov}(X)$ with a 2-categorical structure in which contiguity is a 2-isomorphism and $\Nerve$, $\CoNerve$ are 2-functors sending these 2-cells to chain homotopies.
We have chosen to work directly with refinement maps and pairwise contiguity because the arguments at this level are sufficient for our approach while being intuitive and accessible.
\end{remark}

\subsection{Application: \v Cech and Vietoris-Rips Filtrations}

We now demonstrate how the classical approximation between Čech and Vietoris-Rips filtrations emerges naturally from our framework.

\subsubsection{Ball Covers and Čech Filtration}

\begin{definition}[Ball Cover]
For a metric space $(X,d)$ and $r>0$, the \textbf{ball cover} is:
$$\mathcal{B}(r)=\{B(x,r)\mid x\in X\}$$
where $B(x,r) = \{y \in X : d(x,y) \leq r\}$.
\end{definition}

Let $\mathcal{B}(s)=\{B(x_i,s)\}_{i\in I}$ and $\mathcal{B}(t)=\{B(x_i,t)\}_{i\in I}$ be ball covers of $X$ given two values $s,t\geq0$.
If $s\leq t$ we have that $B(x_i,s)\subseteq B(x_i,t)$ for all $i\in I$.
Let $f_{st}:I \to I$ be the identity map $f_{st}(i)=i$. 
Then $f_{st}$ is a refinement map and $\mathcal{B}(s)$ is a refinement of $\mathcal{B}(t)$ whenever $s\leq t$.

\begin{proposition}
The ball cover defines a persistent cover $\mathcal{B}: \poset \to \Cat{Cov}(X)$, with refinement maps induced by the inclusions $B(x_i,r) \subseteq B(x_i,s)$ for $r \leq s$.
\end{proposition}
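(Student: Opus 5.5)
The plan is to check directly that $\mathcal{B}$ satisfies the definition of a functor $\poset \to \Cat{Cov}(X)$. There are three things to verify: that objects go to objects, that morphisms go to morphisms, and that identities and composition are respected.

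\emph{Objects.} For each $r$, $\mathcal{B}(r)$ must be a cover of $X$. This holds because $d(x,x)=0\le r$, so $x\in B(x,r)$ and $\bigcup_{x\in X}B(x,r)=X$. If $r=0$ is allowed, $B(x,0)=\{x\}$ and the same argument applies.

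\emph{Morphisms.} For $r\le s$, set $\mathcal{B}(r\le s)=f_{rs}$, the identity on the common index set $I=X$. This is a refinement map because $d(x_i,y)\le r$ implies $d(x_i,y)\le s$, which gives $B(x_i,r)\subseteq B(x_i,s)$, as in the paragraph preceding the statement.

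\emph{Functoriality.} The maps $f_{rr}$ are identities, so identities are preserved. For $r\le s\le t$, we have $f_{st}\circ f_{rs}=\mathrm{id}_I=f_{rt}$, so composition is preserved.

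One point needs care, since the statement may be read more loosely. A refinement map is determined only up to a choice, so one could worry about well-definedness. Choosing the identity on indices makes the composition law hold strictly, so no appeal to Proposition~\ref{prop:all_refinements_contiguous} is needed. That proposition would only become relevant for other choices of refinement maps, where functoriality holds up to contiguity.

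I expect no real obstacle. The step most easily overlooked is the covering property, which rests entirely on $x\in B(x,r)$ and hence on the nonnegativity of $r$ and $d(x,x)=0$.
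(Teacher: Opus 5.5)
Your proof is correct and follows the same route as the paper, which takes the identity on the index set $X$ as the structure map and calls the result immediate from the inclusions $B(x,r)\subseteq B(x,s)$. You spell out what the paper leaves implicit: each $\mathcal{B}(r)$ actually covers $X$ because $x\in B(x,r)$, and identity-on-indices refinement maps satisfy the identity and composition laws strictly, with no appeal to contiguity.
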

\begin{proof}
    This is immediate from the inclusions $B(x,r) \subseteq B(x,s)$.
\end{proof}

The \textbf{Čech filtration} is precisely the functor $\Nerve \;\mathcal{B}$.
The nested structure of the \v Cech filtration is a direct consequence of the underlying cover.
The inclusion  $\Nerve \; \mathcal{B}(s) \subseteq \Nerve \;\mathcal{B}(t)$ for $s\leq t$ comes from the injectivity of the refinement map $\mathcal{B}(s) \to \mathcal{B}(t)$ by Lemma \ref{lem:refinement-inclusion}.
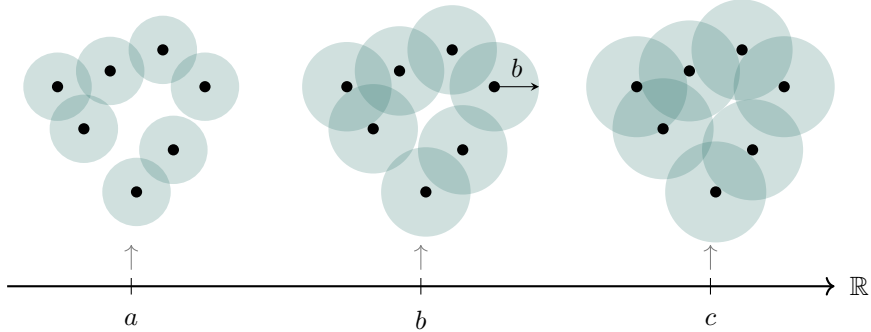
\begin{figure}[htb]
    \centering
    \newsavebox{\BallCoverOne}
\savebox{\BallCoverOne}{
\begin{tikzpicture}[
scale=0.7,
  ball_cover/.style={fill=color_r20g101b93, opacity=0.2},
  node/.style={circle, fill=black, inner sep=1.50pt}]
  \definecolor{color_r20g101b93}{rgb}{0.078,0.396,0.365}
  \begin{scope}
    \fill[ball_cover]
      (1.000,3.000) circle (0.650);
  \end{scope}
  \begin{scope}
    \fill[ball_cover]
      (2.000,3.300) circle (0.650);
  \end{scope}
  \begin{scope}
    \fill[ball_cover]
      (1.500,2.200) circle (0.650);
  \end{scope}
  \begin{scope}
    \fill[ball_cover]
      (3.000,3.700) circle (0.650);
  \end{scope}
  \begin{scope}
    \fill[ball_cover]
      (3.800,3.000) circle (0.650);
  \end{scope}
  \begin{scope}
    \fill[ball_cover]
      (3.200,1.800) circle (0.650);
  \end{scope}
  \begin{scope}
    \fill[ball_cover]
      (2.500,1.000) circle (0.650);
  \end{scope}
  \node[node] at (1.000,3.000) {};
  \node[node] at (2.000,3.300) {};
  \node[node] at (1.500,2.200) {};
  \node[node] at (3.000,3.700) {};
  \node[node] at (3.800,3.000) {};
  \node[node] at (3.200,1.800) {};
  \node[node] at (2.500,1.000) {};
\end{tikzpicture}

}
\newsavebox{\BallCoverTwo}
\savebox{\BallCoverTwo}{
\begin{tikzpicture}[
scale=0.7,
  ball_cover/.style={fill=color_r20g101b93, opacity=0.2},
  node/.style={circle, fill=black, inner sep=1.50pt}]
  \definecolor{color_r20g101b93}{rgb}{0.078,0.396,0.365}
  
  
  \begin{scope}
    \fill[ball_cover] (1.000,3.000) circle (0.850);
  \end{scope}
  \begin{scope}
    \fill[ball_cover] (2.000,3.300) circle (0.850);
  \end{scope}
  \begin{scope}
    \fill[ball_cover] (1.500,2.200) circle (0.850);
  \end{scope}
  \begin{scope}
    \fill[ball_cover] (3.000,3.700) circle (0.850);
  \end{scope}
  
  \begin{scope}
    \fill[ball_cover] (3.800,3.000) circle (0.850);
    \draw[->, >=stealth, black] (3.800,3.000) -- ++(0:0.850) node[midway, above] {$b$};
  \end{scope}
  
  \begin{scope}
    \fill[ball_cover] (3.200,1.800) circle (0.850);
  \end{scope}
  \begin{scope}
    \fill[ball_cover] (2.500,1.000) circle (0.850);
  \end{scope}
  
  \node[node] at (1.000,3.000) {};
  \node[node] at (2.000,3.300) {};
  \node[node] at (1.500,2.200) {};
  \node[node] at (3.000,3.700) {};
  \node[node] at (3.800,3.000) {};
  \node[node] at (3.200,1.800) {};
  \node[node] at (2.500,1.000) {};
\end{tikzpicture}

}
\newsavebox{\BallCoverThree}
\savebox{\BallCoverThree}{
\begin{tikzpicture}[
scale=0.7,
  ball_cover/.style={fill=color_r20g101b93, opacity=0.2},
  node/.style={circle, fill=black, inner sep=1.50pt}]
  \definecolor{color_r20g101b93}{rgb}{0.078,0.396,0.365}
  \begin{scope}
    \fill[ball_cover]
      (1.000,3.000) circle (0.960);
  \end{scope}
  \begin{scope}
    \fill[ball_cover]
      (2.000,3.300) circle (0.960);
  \end{scope}
  \begin{scope}
    \fill[ball_cover]
      (1.500,2.200) circle (0.960);
  \end{scope}
  \begin{scope}
    \fill[ball_cover]
      (3.000,3.700) circle (0.960);
  \end{scope}
  \begin{scope}
    \fill[ball_cover]
      (3.800,3.000) circle (0.960);
  \end{scope}
  \begin{scope}
    \fill[ball_cover]
      (3.200,1.800) circle (0.960);
  \end{scope}
  \begin{scope}
    \fill[ball_cover]
      (2.500,1.000) circle (0.960);
  \end{scope}
  \node[node] at (1.000,3.000) {};
  \node[node] at (2.000,3.300) {};
  \node[node] at (1.500,2.200) {};
  \node[node] at (3.000,3.700) {};
  \node[node] at (3.800,3.000) {};
  \node[node] at (3.200,1.800) {};
  \node[node] at (2.500,1.000) {};
\end{tikzpicture}

}
  
\begin{tikzpicture}[scale=1.1]
  \draw[thick, ->] (0,0) -- (10,0);
  \node at (10.3,0) {$\mathbb{R}$};
  
  \foreach \x/\label in {1.5/a, 5/b, 8.5/c} {
    \draw (\x,-0.1) -- (\x,0.1);
    \node at (\x,-0.4) {$\label$};
  }
  
  \foreach \x in {1.5, 5, 8.5} {
    \draw[->, gray] (\x,0.2) -- (\x,0.5);
  }

  \node at (1.5, 2){\usebox{\BallCoverOne}};

  \node at (5, 2) {\usebox{\BallCoverTwo}};
 
  \node at (8.5, 2) {\usebox{\BallCoverThree}};

\end{tikzpicture}
    \caption{The ball cover}
    \label{fig:ball_cover}
\end{figure}

Furthermore the Nerve and the Co-Nerve of the ball cover are the same simplicial complex.
This self-duality is a consequence of two properties: the index set coincides with the space ($I = X$), and the distance is symmetric.
\begin{proposition}\label{prop:ball_self_dual}
$$\CoNerve(\mathcal{B}(r)) = \Nerve(\mathcal{B}(r)) \quad \forall r \geq 0$$
\end{proposition}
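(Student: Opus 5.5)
The plan is to identify the index set of $\mathcal{B}(r)$ with $X$ via $x \mapsto B(x,r)$. Then both complexes have vertex set $X$, and I will show they contain exactly the same finite subsets $\sigma \subseteq X$. I will unfold both definitions in terms of distances and compare them.

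First, by the definition of the Co-Nerve, $\sigma \in \CoNerve(\mathcal{B}(r))$ if and only if there is some $y \in X$ with $\sigma \subseteq B(y,r)$. This means $d(y,x) \le r$ for all $x \in \sigma$.

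Second, by the definition of the Nerve (with indices identified with centers), $\sigma \in \Nerve(\mathcal{B}(r))$ if and only if $\bigcap_{x \in \sigma} B(x,r) \neq \emptyset$. This means there exists $y \in X$ with $y \in B(x,r)$, i.e. $d(x,y) \le r$, for all $x \in \sigma$.

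The two conditions coincide once we invoke symmetry $d(x,y)=d(y,x)$. The same witness $y$ works in both directions, so the two families of simplices are equal.

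The only subtle point, and the step I would treat carefully, is the identification of vertex sets. Formally the Nerve lives on the index set $I$, while the Co-Nerve lives on $X$. If two distinct points $x \neq x'$ had the same ball, the map from indices to sets would not be injective. The fix is to state explicitly that $I = X$ with $i \leftrightarrow x_i$, as in the definition $\mathcal{B}(r)=\{B(x,r)\mid x\in X\}$. With this convention the equality is a literal equality of simplicial complexes on the vertex set $X$. No appeal to Dowker duality is needed; the equality is strictly stronger than the homology isomorphism of Theorem~\ref{thm:dowker}.
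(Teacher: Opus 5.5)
Your proposal is correct and matches the paper's proof: both take the index set of $\mathcal{B}(r)$ to be $X$, unfold the Co-Nerve and Nerve conditions into ``$\exists\, y \in X$ with $d(x,y) \le r$ for all $x \in \sigma$,'' and use the symmetry of $d$ to identify the two with the same witness $y$. Your explicit remark that the indexing is by points rather than by distinct sets is a useful clarification that the paper states only in passing.
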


\begin{proof}
Since the index set of $\mathcal{B}(r)$ is $X$ itself, both
complexes have vertex set $X$.  For a finite subset
$\sigma \subseteq X$:
\begin{align*}
    \sigma \in \CoNerve(\mathcal{B}(r))
    &\iff \exists\, y \in X \text{ such that }
          \sigma \subseteq B(y,r) \\
    &\iff \exists\, y \in X \text{ such that }
          \forall x \in \sigma,\; d(x,y) \le r \\
    &\iff \exists\, y \in X \text{ such that }
          \forall x \in \sigma,\; y \in B(x,r)
          \quad (\text{symmetry of } d) \\
    &\iff \textstyle\bigcap_{x \in \sigma} B(x,r) \neq \emptyset \\
    &\iff \sigma \in \Nerve(\mathcal{B}(r)). \qedhere
\end{align*}
\end{proof}

\subsubsection{Maximal-Clique Covers and Vietoris-Rips Filtration}

\begin{definition}
\label{def:maximal_clique_cover}
Let $(X,d)$ be a metric space and $r>0$. A subset $M \subseteq X$ is an \textbf{$r$-clique} if $\diam(M)\le r$. An $r$-clique is \textbf{maximal} if it is not properly contained in any other $r$-clique. The \textbf{maximal-clique cover} is the collection
\[
\mathcal{M}(r) = \{\,M : M \text{ is a maximal } r\text{-clique}\,\}.
\]
\end{definition}

This is the definition of \emph{maximal linked sets} in \cite[Sec.~8.1]{jardine1971mathematical}, but phrased in terms of diameters and covers as above.

\begin{lemma}
\label{lem:maximal_clique_refinement}
If $s \leq t$, then $\mathcal{M}(s)$ refines $\mathcal{M}(t)$.
\end{lemma}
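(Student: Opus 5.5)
To show that $\mathcal{M}(s)$ refines $\mathcal{M}(t)$, it suffices to exhibit, for every maximal $s$-clique $M$, some maximal $t$-clique $M'$ with $M \subseteq M'$; any choice function then defines a refinement map $\mathcal{M}(s)\to\mathcal{M}(t)$. By Proposition~\ref{prop:all_refinements_contiguous}, the particular choice is irrelevant.

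Fix $M \in \mathcal{M}(s)$. Since $\diam(M) \le s \le t$, the set $M$ is already a $t$-clique, though it need not be maximal at scale $t$. The plan is to enlarge $M$ to a maximal $t$-clique by a maximality argument. Consider the family
\[
\mathcal{F} = \{\, N \subseteq X : M \subseteq N,\ \diam(N) \le t \,\},
\]
partially ordered by inclusion. It is nonempty, since $M \in \mathcal{F}$.

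In the finite setting used throughout the paper, where $X$ is a finite point cloud, $\mathcal{F}$ is finite and therefore has a maximal element; we would be done immediately. For general $X$ I would invoke Zorn's lemma, and this is the main step to verify. Let $\{N_\alpha\}$ be a chain in $\mathcal{F}$ and set $N=\bigcup_\alpha N_\alpha$. Any two points $x,y\in N$ lie in some $N_\alpha$ and $N_\beta$, and since the family is a chain, both lie in the larger of the two. Hence $d(x,y)\le t$, and so $\diam(N) = \sup_{x,y\in N} d(x,y) \le t$. Since $M \subseteq N$, we get $N\in\mathcal{F}$, so $N$ is an upper bound for the chain.

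Zorn's lemma therefore gives a maximal element $M'$ of $\mathcal{F}$. It remains to check that $M'$ is a maximal $t$-clique in the sense of Definition~\ref{def:maximal_clique_cover}. Suppose $M' \subsetneq N'$ with $\diam(N')\le t$. Then $N'$ contains $M$, so $N' \in \mathcal{F}$, which contradicts the maximality of $M'$ in $\mathcal{F}$. Hence $M' \in \mathcal{M}(t)$ and $M\subseteq M'$.

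The same Zorn argument applied to a single point $\{x\}$ shows that $\mathcal{M}(t)$ is indeed a cover of $X$, so the refinement takes place inside $\Cat{Cov}(X)$ as required.
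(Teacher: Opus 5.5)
Your proof is correct and follows the paper's argument: $\diam(M)\le s\le t$ makes $M$ a $t$-clique, you extend it to a maximal $t$-clique, and choosing one such extension for each $M$ gives the refinement map. The paper simply asserts that a non-maximal $t$-clique lies in some maximal one. You justify that step (trivially for finite $X$, via Zorn's lemma in general), and the same argument also confirms that $\mathcal{M}(t)$ covers $X$.
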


\begin{proof}
Let $M \in \mathcal{M}(s)$. By definition, $\diam(M) \leq s \leq t$, so $M$ is a valid $t$-clique.

We consider two cases regarding the maximality of $M$ at scale $t$:
\begin{itemize}
    \item If $M$ is already maximal at scale $t$, then $M \in \mathcal{M}(t)$, and it is trivially contained in itself.
    \item If $M$ is not maximal at scale $t$, it must be properly contained in some larger maximal $t$-clique $N \in \mathcal{M}(t)$.
\end{itemize}
In either case, there exists a set $N \in \mathcal{M}(t)$ such that $M \subseteq N$. We define a refinement map $f: \mathcal{M}(s) \to \mathcal{M}(t)$ by selecting one such $N$ for each $M$.
\end{proof}
While Lemma \ref{lem:maximal_clique_refinement} guarantees the existence of a refinement map, the map is not uniquely determined because a maximal $s$-clique may be contained in the intersection of multiple distinct maximal $t$-cliques.
This non-uniqueness prevents $\mathcal{M}$ from being a functor from the poset category $\poset \to \Cat{Cov}(X)$, as independent choices of refinement maps satisfy composition only up to contiguity.
We can introduce a total order on $X$ that allows us to make a canonical choice without any issues knowing that any other choice results in a contiguous refinement.

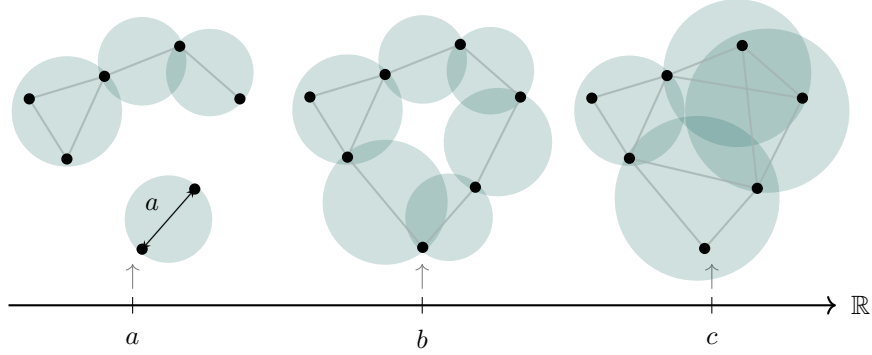
\begin{figure}[htb]
    \centering
    \newsavebox{\MaxCliqueOne}
\savebox{\MaxCliqueOne}{
\begin{tikzpicture}[
  clique_cover/.style={fill=color_r20g101b93, opacity=0.2},
  edge41/.style={color_r164g179b178, opacity=0.8, thick},
  node/.style={circle, fill=black, inner sep=1.50pt}]
  \definecolor{color_r164g179b178}{rgb}{0.643,0.702,0.698}
  \definecolor{color_r20g101b93}{rgb}{0.078,0.396,0.365}
  \fill[clique_cover] (2.850,1.400) circle (0.582);
  \fill[clique_cover] (1.500,2.833) circle (0.734);
  \fill[clique_cover] (2.500,3.500) circle (0.589);
  \fill[clique_cover] (3.400,3.350) circle (0.582);
  \draw[edge41] (1.000,3.000) -- (2.000,3.300);
  \draw[edge41] (1.000,3.000) -- (1.500,2.200);
  \draw[edge41] (2.000,3.300) -- (1.500,2.200);
  \draw[edge41] (2.000,3.300) -- (3.000,3.700);
  \draw[edge41] (3.000,3.700) -- (3.800,3.000);
  \draw[edge41] (3.200,1.800) -- (2.500,1.000);
  \draw[<->, >=stealth, black] (3.200,1.800) -- (2.500,1.000) node[midway,above left] {$a$};
  \node[node] at (1.000,3.000) {};
  \node[node] at (2.000,3.300) {};
  \node[node] at (1.500,2.200) {};
  \node[node] at (3.000,3.700) {};
  \node[node] at (3.800,3.000) {};
  \node[node] at (3.200,1.800) {};
  \node[node] at (2.500,1.000) {};
\end{tikzpicture}
}
\newsavebox{\MaxCliqueTwo}
\savebox{\MaxCliqueTwo}{

\begin{tikzpicture}[
  clique_cover/.style={fill=color_r20g101b93, opacity=0.2},
  edge42/.style={color_r164g179b178, opacity=0.8, thick},
  node/.style={circle, fill=black, inner sep=1.50pt}]
  \definecolor{color_r164g179b178}{rgb}{0.643,0.702,0.698}
  \definecolor{color_r20g101b93}{rgb}{0.078,0.396,0.365}
  \fill[clique_cover] (2.000,1.600) circle (0.831);
  \fill[clique_cover] (2.850,1.400) circle (0.582);
  \fill[clique_cover] (1.500,2.833) circle (0.734);
  \fill[clique_cover] (2.500,3.500) circle (0.589);
  \fill[clique_cover] (3.400,3.350) circle (0.582);
  \fill[clique_cover] (3.500,2.400) circle (0.721);
  \draw[edge42] (1.000,3.000) -- (2.000,3.300);
  \draw[edge42] (1.000,3.000) -- (1.500,2.200);
  \draw[edge42] (2.000,3.300) -- (1.500,2.200);
  \draw[edge42] (2.000,3.300) -- (3.000,3.700);
  \draw[edge42] (1.500,2.200) -- (2.500,1.000);
  \draw[edge42] (3.000,3.700) -- (3.800,3.000);
  \draw[edge42] (3.800,3.000) -- (3.200,1.800);
  \draw[edge42] (3.200,1.800) -- (2.500,1.000);
  \node[node] at (1.000,3.000) {};
  \node[node] at (2.000,3.300) {};
  \node[node] at (1.500,2.200) {};
  \node[node] at (3.000,3.700) {};
  \node[node] at (3.800,3.000) {};
  \node[node] at (3.200,1.800) {};
  \node[node] at (2.500,1.000) {};
\end{tikzpicture}
}
\newsavebox{\MaxCliqueThree}
\savebox{\MaxCliqueThree}{
\begin{tikzpicture}[
  clique_cover/.style={fill=color_r20g101b93, opacity=0.2},
  edge43/.style={color_r164g179b178, opacity=0.8, thick},
  node/.style={circle, fill=black, inner sep=1.50pt}]
  \definecolor{color_r164g179b178}{rgb}{0.643,0.702,0.698}
  \definecolor{color_r20g101b93}{rgb}{0.078,0.396,0.365}
  \fill[clique_cover] (2.400,1.667) circle (1.096);
  \fill[clique_cover] (1.500,2.833) circle (0.734);
  \fill[clique_cover] (2.933,3.333) circle (0.984);
  \fill[clique_cover] (3.333,2.833) circle (1.092);
  \draw[edge43] (1.000,3.000) -- (2.000,3.300);
  \draw[edge43] (1.000,3.000) -- (1.500,2.200);
  \draw[edge43] (2.000,3.300) -- (1.500,2.200);
  \draw[edge43] (2.000,3.300) -- (3.000,3.700);
  \draw[edge43] (2.000,3.300) -- (3.800,3.000);
  \draw[edge43] (1.500,2.200) -- (3.200,1.800);
  \draw[edge43] (1.500,2.200) -- (2.500,1.000);
  \draw[edge43] (3.000,3.700) -- (3.800,3.000);
  \draw[edge43] (3.000,3.700) -- (3.200,1.800);
  \draw[edge43] (3.800,3.000) -- (3.200,1.800);
  \draw[edge43] (3.200,1.800) -- (2.500,1.000);
  \node[node] at (1.000,3.000) {};
  \node[node] at (2.000,3.300) {};
  \node[node] at (1.500,2.200) {};
  \node[node] at (3.000,3.700) {};
  \node[node] at (3.800,3.000) {};
  \node[node] at (3.200,1.800) {};
  \node[node] at (2.500,1.000) {};
\end{tikzpicture}

}
\begin{tikzpicture}[scale=1.1]
  \draw[thick, ->] (0,0) -- (10,0);
  \node at (10.3,0) {$\mathbb{R}$};
  
  \foreach \x/\label in {1.5/a, 5/b, 8.5/c} {
    \draw (\x,-0.1) -- (\x,0.1);
    \node at (\x,-0.4) {$\label$};
  }
  
  \foreach \x in {1.5, 5, 8.5} {
    \draw[->, gray] (\x,0.2) -- (\x,0.5);
  }

  \node at (1.5, 2){\usebox{\MaxCliqueOne}};

  \node at (5, 2) {\usebox{\MaxCliqueTwo}};
 
  \node at (8.5, 2) {\usebox{\MaxCliqueThree}};

\end{tikzpicture}
    \caption{The maximal clique cover. To help visualize the clique structure we added grey lines matching points $x,y$ whenever $d(x,y)\leq r$}
    \label{fig:max_clique_cover}
\end{figure}

\begin{proposition}
Let $(X,d)$ be a metric space equipped with a total order.
Then $\mathcal{M}$ defines a covariant functor from the poset category $\poset\to\Cat{Cov}(X)$.
\end{proposition}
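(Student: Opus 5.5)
The plan is to build the refinement maps $\mathcal{M}(s\le t)$ so that functoriality holds \emph{by construction}, not by checking a canonical choice against composition. The obvious attempt sets $\mathcal{M}(s\le t)(M)$ to the least maximal $t$-clique containing $M$, with respect to an order on cliques induced by the total order on $X$ (for example, lexicographic order on sorted tuples). Lemma~\ref{lem:maximal_clique_refinement} guarantees such a clique exists, and the identity axiom $\mathcal{M}(s\le s)=\mathrm{id}$ holds because a maximal $s$-clique is the only maximal $s$-clique containing itself. Composition, however, fails in general. The least $u$-clique containing $M$ need not contain the least $t$-clique $N\supseteq M$, so it can differ from the least $u$-clique containing $N$. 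I would therefore not rely on pointwise least choices.

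Instead I will assume $X$ is finite, which is the setting of the paper, and factor everything through the finitely many critical scales. Let $0=r_0<r_1<\dots<r_k$ be $0$ together with the distinct pairwise distances $d(x,y)$. Whether $\diam(M)\le r$ holds depends only on which critical interval contains $r$. Hence $\mathcal{M}(r)$ is constant, as a set of subsets, on each $[r_i,r_{i+1})$ and on $[r_k,\infty)$. On such an interval the refinement maps are identities. Between consecutive critical scales I define $g_i:\mathcal{M}(r_i)\to\mathcal{M}(r_{i+1})$ by the total-order choice above, so each $g_i$ is a refinement by Lemma~\ref{lem:maximal_clique_refinement}.

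For general $s\le t$ with $s\in[r_i,r_{i+1})$ and $t\in[r_j,r_{j+1})$, set
\[
\mathcal{M}(s\le t) := g_{j-1}\circ\cdots\circ g_i ,
\]
with the empty composite equal to the identity when $i=j$. A composite of refinement maps is a refinement map, since the inclusions $V\subseteq U_{f(V)}$ chain together. The identity axiom is then immediate. Composition $\mathcal{M}(t\le u)\circ\mathcal{M}(s\le t)=\mathcal{M}(s\le u)$ reduces to the associativity of composing the $g$'s, because the index ranges concatenate: $[i,j)$ followed by $[j,l)$ gives $[i,l)$. So $\mathcal{M}$ is a functor $\poset\to\Cat{Cov}(X)$. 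The total order is used only to make each $g_i$ canonical, so the construction involves no arbitrary choices.

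The main obstacle is that functoriality is a strict equality of maps between index sets, whereas the natural canonical choices agree only up to contiguity. The step-by-step factorization is how I intend to get around this. If the paper wants the pointwise choice $\mathcal{M}(s\le t)(M)$ instead, I expect only contiguity to be provable, via Proposition~\ref{prop:all_refinements_contiguous}. That weaker statement is still enough for Theorem~\ref{thm:main_propagation}, since Lemma~\ref{lem:homology_invariance_contiguity} makes all such choices agree after applying $H_*\Nerve$ or $H_*\CoNerve$.
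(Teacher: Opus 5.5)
Your proof is correct, and it departs from the paper at exactly the point you identified as the problem. The paper takes the pointwise choice $f_{s,t}(M)=\min_{\mathrm{lex}}\{N\in\mathcal{M}(t): M\subseteq N\}$. It then proves composition by asserting $M\subseteq N\subseteq P$ for $N=f_{r,s}(M)$ and $P=f_{r,t}(M)$. The inclusion $N\subseteq P$ is never justified, and it is false in general.

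For example, take $X=\{a,b,c,e,g\}$ with $d(a,b)=d(a,c)=1$, $d(a,e)=d(c,e)=d(a,g)=d(b,g)=3/2$, and the remaining four distances equal to $2$. This is automatically a metric, since all distances lie in $[1,2]$. Order the points $e<b<c<g<a$. Then:
\begin{itemize}
\item the least maximal $1$-clique containing $\{a\}$ is $\{a,b\}$;
\item the least maximal $3/2$-clique containing $\{a\}$ is $\{a,c,e\}$;
\item the only maximal $3/2$-clique containing $\{a,b\}$ is $\{a,b,g\}$.
\end{itemize}
The first two hold for both the increasing- and decreasing-tuple lexicographic orders. Hence $f_{1,3/2}\circ f_{1/2,1}\neq f_{1/2,3/2}$, and the paper's maps form a functor only up to contiguity, as you predicted.

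Your factorization through the consecutive critical scales needs no coherence check. Maps chosen between consecutive members of a finite chain, with identities on each constancy interval, always generate a functor. The cost is twofold: $\mathcal{M}(s\le t)$ is an iterated choice rather than a closed-form canonical one, and you need finitely many critical values, i.e.\ finite $X$. The paper's lexicographic argument makes that finiteness assumption implicitly anyway.

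Your closing remark is also right. For Theorem~\ref{thm:main_propagation} the pointwise choice would suffice, because after applying $H_*\Nerve$ or $H_*\CoNerve$ all refinement choices coincide by Proposition~\ref{prop:all_refinements_contiguous}.
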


\begin{proof}
The total order on $X$ induces a lexicographic order on finite subsets of $X$. For $s \leq t$ and $M \in \mathcal{M}(s)$, define $f_{s,t}(M)$ to be the lexicographically minimal element of 
$$f_{s,t}(M)=\min\{N \in \mathcal{M}(t) : M \subseteq N\}$$

This set is non-empty by the previous lemma, and the lexicographic order ensures a unique minimal element exists.

\textbf{Identity:} For $M \in \mathcal{M}(r)$, we have $f_{r,r}(M) = M$ since $M$ is the unique maximal $r$-clique containing itself.
\textbf{Composition:} For $r \leq s \leq t$ and $M \in \mathcal{M}(r)$, let $N = f_{r,s}(M)$ and $P = f_{r,t}(M)$. We must show $f_{s,t}(N) = P$.

Since $M \subseteq N \subseteq P$ (as $N$ is a maximal $s$-clique containing $M$ and $P$ is a maximal $t$-clique containing $M$), we have that $P$ is a maximal $t$-clique containing $N$.

Suppose for contradiction that $f_{s,t}(N) = Q \neq P$ for some $Q \in \mathcal{M}(t)$ with $N \subseteq Q$. Since $Q$ is chosen as the lexicographically minimal maximal $t$-clique containing $N$, we have $Q <_{\text{lex}} P$.

But $M \subseteq N \subseteq Q$ implies $Q$ is a maximal $t$-clique containing $M$. Since $P = f_{r,t}(M)$ was chosen as the lexicographically minimal such clique, we must have $P \leq_{\text{lex}} Q$, contradiction.

Therefore $f_{s,t}(N) = P = f_{r,t}(M)$, establishing functoriality.
\end{proof}

\begin{definition}[Vietoris-Rips Complex ]
Let $(X, d)$ be a metric space and $r> 0$.
The \textbf{Vietoris-Rips} complex at scale $r$, denoted by $\Rips(r)$, has $X$ as its vertex set and a finite simplex $$\sigma \in \Rips(r) \iff\diam(\sigma) \leq r$$
\end{definition}

As pointed out by \citeauthor{virk2019rips} \cite{virk2019rips}, the Vietoris-Rips complex is the co-nerve of the maximal clique cover.
We expand this by showing that the \textbf{Vietoris-Rips filtration} is precisely the functor $\CoNerve \;\mathcal{M}$ .
Contrary to the case of the \v Cech filtration, for $s\leq t$ the refinement maps $\mathcal{M}(s)\to\mathcal{M}(t)$ are \textit{not} injective.
If one were instead to take the Nerve $\Nerve(\mathcal{M}(s)\to\mathcal{M}(t))$ it would not result in a nested sequence of simplicial complexes.
The nested structure of the Vietoris-Rips filtration comes from the the properties of the co-nerve functor which maps all refinements to simplicial inclusions (lemma \ref{lem:refinement-inclusion}).

\subsubsection{The Classical Approximation}
In this subsection we will show how the classic interleaving/approximation result between the persistent modules of the \v Cech and the Vietoris-Rips filtration appears as a consequence of the approximation result of their underlying covers.

\begin{proposition}[\cite{virk2019rips}]
\label{prop:clique_properties}
The maximal-clique cover $\mathcal{M}(r)$ is the unique cover satisfying:
$$\sigma \subseteq X \text{ is contained in some } M \in \mathcal{M}(r) \iff \diam(\sigma)\leq r$$
\end{proposition}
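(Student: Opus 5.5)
The plan is to split the statement into an \emph{existence} part, that $\mathcal{M}(r)$ has the stated property, and a \emph{uniqueness} part. First I have to make precise in what sense uniqueness can hold. Adding a non-maximal $r$-clique to $\mathcal{M}(r)$ does not change the property, so uniqueness cannot be literal. I will prove it in the form: every cover $\mathcal{U}$ with the property contains $\mathcal{M}(r)$, and every element of $\mathcal{U}$ lies in some element of $\mathcal{M}(r)$. Consequently $\mathcal{M}(r)$ is the unique cover with the property in which no element is properly contained in another. Equivalently, it is the set of inclusion-maximal elements of any such cover. Fixing this interpretation is the step I expect to be the main obstacle. The computations themselves are short.

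For existence, take $\sigma\subseteq M\in\mathcal{M}(r)$. Then $\diam(\sigma)\le\diam(M)\le r$. Conversely, suppose $\diam(\sigma)\le r$. Then $\sigma$ is an $r$-clique, and I need a maximal $r$-clique containing it. For finite $X$ this follows by adding points until no more can be added. In general I would use Zorn's lemma on the $r$-cliques containing $\sigma$, ordered by inclusion. The key observation is that $\diam(A)\le r$ is a condition on pairs of points. Hence the union of a chain of $r$-cliques is again an $r$-clique, since any two of its points already lie in a common member of the chain. This is the same fact used implicitly in Lemma~\ref{lem:maximal_clique_refinement}.

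For uniqueness, let $\mathcal{U}$ be any cover of $X$ satisfying the property. First, each $U\in\mathcal{U}$ is an $r$-clique. If the property is applied to $\sigma=U$ this is immediate. If only finite $\sigma$ are allowed, every two-point subset $\{x,y\}\subseteq U$ has diameter at most $r$, so $d(x,y)\le r$ and therefore $\diam(U)\le r$. By the Zorn step, $U$ lies in some $M'\in\mathcal{M}(r)$.

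Second, take any $M\in\mathcal{M}(r)$. Since $\diam(M)\le r$, the property (applied to $M$, or to its finite subsets together with maximality) gives some $U\in\mathcal{U}$ with $M\subseteq U$. Then $M\subseteq U\subseteq M'$ for some maximal $M'$. Maximality of $M$ forces $M=U=M'$, so $M\in\mathcal{U}$.

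Hence $\mathcal{M}(r)\subseteq\mathcal{U}$, and every element of $\mathcal{U}$ is dominated by an element of $\mathcal{M}(r)$. If $\mathcal{U}$ has no properly nested elements, every $U\in\mathcal{U}$ equals the maximal clique containing it, so $\mathcal{U}=\mathcal{M}(r)$. In the finite-$\sigma$ reading, the step where $M$ itself must lie in some $U$ requires care when $M$ is infinite. For finite $X$ (the computational setting of the paper) this issue disappears, and I would state the result under that assumption if needed.
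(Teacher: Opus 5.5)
The paper gives no proof of this proposition. It is imported from \cite{virk2019rips}, and only the implication ``$\diam(\sigma)\le r \Rightarrow \sigma\subseteq M$ for some $M\in\mathcal{M}(r)$'' is ever used (e.g.\ in Theorem~\ref{thm:mult-approx} and Proposition~\ref{prop:flag-approx}); the uniqueness clause is never invoked. So there is no argument to compare against, and your proof stands on its own, which it does. You are right that uniqueness cannot be literal: adjoining a non-maximal $r$-clique to $\mathcal{M}(r)$ preserves the property. The version you prove is the correct repair: $\mathcal{M}(r)\subseteq\mathcal{U}$, and every $U\in\mathcal{U}$ lies in a member of $\mathcal{M}(r)$. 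Hence $\mathcal{M}(r)$ is exactly the set of inclusion-maximal members of any such $\mathcal{U}$. Your Zorn step is also the proper justification, for infinite $X$, of the existence of maximal cliques that the paper takes for granted in Lemma~\ref{lem:maximal_clique_refinement}.

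Two sharpenings. First, covers here are indexed families $\{U_i\}_{i\in I}$, so listing a maximal clique twice gives another family with the property and no proper nesting. Your uniqueness therefore holds for the underlying set of subsets, i.e.\ up to repetition and reindexing.

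Second, your worry about the reading in which $\sigma$ ranges only over finite sets is justified. Under that reading uniqueness is actually false for infinite $X$. Take $X=\mathbb{N}$ with $d(x,y)=1$ for $x\neq y$, and $r=1$. Then $\mathcal{M}(1)=\{X\}$. But $\mathcal{U}=\{X\setminus\{n\}\}_{n\in\mathbb{N}}$ covers $X$ by $1$-cliques, has no proper nesting, and every finite subset of $X$ lies in one of its members. The statement is written with arbitrary $\sigma\subseteq X$, so your main reading is the literal one, and under it your proof works for every metric space. The restriction to finite $X$ is needed only if one insists on finite simplices.
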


\begin{proposition}
\label{prop:ball_propeties}
    The Ball cover $\mathcal{B}(r)$ has the following properties:
\begin{enumerate}
    \item $\diam(\sigma)\leq r \implies \exists x_0\in X$ such that  $\sigma \subseteq B(x_0,r)$
    \item $\sigma \subseteq B(x_0,r)$ for some $x_0\in X \implies \diam(\sigma)\leq 2r$
\end{enumerate}
\end{proposition}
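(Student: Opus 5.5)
Both properties follow from the definition of the closed ball and the metric axioms; no earlier result of the paper is needed. I will treat the two implications separately, for an arbitrary finite (or arbitrary) subset $\sigma \subseteq X$, with the convention $\diam(\sigma) = \sup_{y,z\in\sigma} d(y,z)$.

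For the first property, the plan is to take the center of the ball \emph{inside} $\sigma$. Assume $\diam(\sigma)\le r$. If $\sigma=\emptyset$ the inclusion $\sigma\subseteq B(x_0,r)$ holds for any $x_0\in X$, provided $X\neq\emptyset$. Otherwise fix any $x_0\in\sigma$. For each $y\in\sigma$ we have $d(x_0,y)\le \sup_{y',z\in\sigma}d(y',z)=\diam(\sigma)\le r$. Hence $y\in B(x_0,r)=\{y : d(x_0,y)\le r\}$, which gives $\sigma\subseteq B(x_0,r)$.

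For the second property, suppose $\sigma\subseteq B(x_0,r)$ for some $x_0\in X$; here $x_0$ need not lie in $\sigma$. For any $y,z\in\sigma$, symmetry and the triangle inequality give
\[
d(y,z)\le d(y,x_0)+d(x_0,z)=d(x_0,y)+d(x_0,z)\le r+r=2r.
\]
Taking the supremum over all pairs $y,z\in\sigma$ yields $\diam(\sigma)\le 2r$.

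There is no real obstacle; the only points needing care are bookkeeping. One is the empty simplex in the first property. The other is that the diameter is a supremum, so the bound in the second property is obtained pairwise and then passed to the supremum. It is also worth recording, for later use in the \v Cech--Rips comparison, the role of the constant: the factor $2$ is exactly the loss incurred because the center $x_0$ of a covering ball need not belong to $\sigma$. Combined with Proposition~\ref{prop:clique_properties}, these properties will supply the refinement maps $\mathcal{M}(r)\to\mathcal{B}(r)$ and $\mathcal{B}(r)\to\mathcal{M}(2r)$.
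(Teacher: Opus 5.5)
Your proof is correct. For (1) you take the center inside $\sigma$, and for (2) you route through the center with symmetry and the triangle inequality; this is the standard argument, which the paper leaves implicit since it states the proposition without proof. Your bound $\diam(\sigma)\leq 2r$ in (2) is exactly what Theorem~\ref{thm:ball-clique-approx} needs; the paper's proof there writes $\diam(B(x_i,r))\leq r$ where it should say $\leq 2r$.
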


\begin{theorem}
\label{thm:ball-clique-approx}
There exist two families of refinement maps:
\begin{align*}
         f_r &: \mathcal{M}(r) \to\mathcal{B}(r) \tag{1}\\
        g_r &: \mathcal{B}(r) \to \mathcal{M}(2r) \tag{2}
\end{align*}that is,  $\mathcal{B}$ and $\mathcal{M}$ are $2$-approximated up to contiguity:
$$
\phi: \mathcal{B} \xleftrightarrow{2} \mathcal{M} : \psi
$$
\end{theorem}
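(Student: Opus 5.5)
The plan is to build the two families of refinement maps directly from the two characterizations already recorded: Proposition~\ref{prop:clique_properties} for $\mathcal{M}$ and Proposition~\ref{prop:ball_propeties} for $\mathcal{B}$. Then I will invoke Proposition~\ref{prop:all_refinements_contiguous}, which makes every required diagram commute up to contiguity automatically, so that Definition~\ref{def:c-approximation-contiguity} is satisfied with $c=2$.

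\textbf{Step 1: the maps $f_r$.} Let $M\in\mathcal{M}(r)$. By definition $\diam(M)\le r$. Part (1) of Proposition~\ref{prop:ball_propeties} gives a point $x_0\in X$ with $M\subseteq B(x_0,r)$. Set $f_r(M)=B(x_0,r)$, choosing $x_0$ by the total order on $X$ if several exist. This is a refinement map $\mathcal{M}(r)\to\mathcal{B}(r)$, and composing it with the identity refinement $\mathcal{B}(r)\to\mathcal{B}(2r)$ yields $\psi_r:\mathcal{M}(r)\to\mathcal{B}(2r)$.

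The claim in part (1) is trivial only when $\sigma\neq\emptyset$. In that case any $x_0\in\sigma$ works, because every $y\in\sigma$ satisfies $d(x_0,y)\le\diam(\sigma)\le r$. Maximal cliques are nonempty whenever $X\neq\emptyset$, so this case suffices.

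\textbf{Step 2: the maps $g_r$.} Let $B(x,r)\in\mathcal{B}(r)$. Any two points $y,z$ in this ball satisfy $d(y,z)\le d(y,x)+d(x,z)\le 2r$, which is part (2) of Proposition~\ref{prop:ball_propeties}. So $\diam(B(x,r))\le 2r$. By Proposition~\ref{prop:clique_properties}, or equivalently by extending the $2r$-clique to a maximal one as in the proof of Lemma~\ref{lem:maximal_clique_refinement}, there is some $M\in\mathcal{M}(2r)$ with $B(x,r)\subseteq M$. Set $g_r(B(x,r))=M$, taking the lexicographically minimal such $M$. Then $\phi_r:=g_r:\mathcal{B}(r)\to\mathcal{M}(2r)$ is a refinement map.

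\textbf{Step 3: conclude.} The families $\phi_r:\mathcal{B}(r)\to\mathcal{M}(2r)$ and $\psi_r:\mathcal{M}(r)\to\mathcal{B}(2r)$ exist, so by Definition~\ref{def:c-approximation-contiguity} the persistent covers are $2$-approximated up to contiguity. By Proposition~\ref{prop:all_refinements_contiguous}, the naturality squares and the triangles $\mathcal{B}(r)\to\mathcal{M}(2r)\to\mathcal{B}(4r)$ and $\mathcal{M}(r)\to\mathcal{B}(2r)\to\mathcal{M}(4r)$ all commute up to contiguity without further checking.

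\textbf{Main obstacle.} The only delicate point is the existence of a maximal $2r$-clique containing a given set when $X$ is infinite. If $X$ is finite, I would simply enlarge the set greedily. If $X$ is infinite, a Zorn's lemma argument is needed: the union of a chain of $2r$-cliques is again a $2r$-clique, because the diameter condition is pairwise and every pair of points lies in some member of the chain. For finite point clouds, which is the setting of the paper, this step is immediate.
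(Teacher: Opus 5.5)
Your proposal is correct and follows the same route as the paper. You place each maximal $r$-clique in a ball $B(x_0,r)$ centered at one of its own points, and you use $\diam B(x,r)\le 2r$ to place each ball in a maximal $2r$-clique; contiguity is then automatic from Proposition~\ref{prop:all_refinements_contiguous}. Your version is slightly more careful than the written proof: the paper writes $\diam(B(x_i,r))\le r$ where $2r$ is meant, and it leaves implicit both the composition with $\mathcal{B}(r)\to\mathcal{B}(2r)$ needed to match Definition~\ref{def:c-approximation-contiguity} and the extension to a maximal clique when $X$ is infinite.
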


\begin{proof}
The proof follows from the definitions and properties of the covers.
\begin{enumerate}
    \item  Let $\mathcal{M}(r)=\{M_i\}_{i\in I}$ for every $i\in I$ we have that $\diam(M_i)\leq r$ therefore there exists $x_j \in X$ such that $M_i \subseteq B(x_j,r)$. Let $f_r$ be the refinement that maps $f(i)=j$.
    \item Let $\mathcal{B}(r)=\{B(x_i,r)\}_{i\in I}$ then $\diam(B(x_i,r))\leq r$ which implies that there exists some $M_j \in \mathcal{M}(2r)$ such that $B(x_i,r) \subseteq M_j$. Let $g_r$ be the refinement that maps $g(i)=j$.
\end{enumerate}
\end{proof}

\begin{corollary}[\v Cech-Vietoris-Rips Approximation]
\label{cor:cech_rips}
There is a $2$-approximation between the persistence modules of the \v Cech and the Vietoris-Rips filtrations (among others):
$$H_*\Nerve\; \mathcal{B} \xleftrightarrow{2} H_*\CoNerve\;  \mathcal{M}$$
\end{corollary}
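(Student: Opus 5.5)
This corollary follows directly from Theorem~\ref{thm:ball-clique-approx} fed into Theorem~\ref{thm:main_propagation}. Theorem~\ref{thm:ball-clique-approx} provides refinement maps $f_r:\mathcal{M}(r)\to\mathcal{B}(r)$ and $g_r:\mathcal{B}(r)\to\mathcal{M}(2r)$. Definition~\ref{def:c-approximation-contiguity} requires refinements $\mathcal{B}(r)\to\mathcal{M}(2r)$ and $\mathcal{M}(r)\to\mathcal{B}(2r)$. The first is $g_r$. For the second, compose $f_r$ with the persistent refinement $\mathcal{B}(r\le 2r)$. A composite of refinement maps is again a refinement, since $M\subseteq B(x,r)\subseteq B(x,2r)$. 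So $\mathcal{B}$ and $\mathcal{M}$ are $2$-approximated up to contiguity in the sense of Definition~\ref{def:c-approximation-contiguity}.

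Next I apply Theorem~\ref{thm:main_propagation}, case~(3), with $\mathcal{U}=\mathcal{B}$ and $\mathcal{V}=\mathcal{M}$. It gives $H_*\Nerve\,\mathcal{B}\xleftrightarrow{2} H_*\CoNerve\,\mathcal{M}$ directly. All required commutativities are automatic: by Proposition~\ref{prop:all_refinements_contiguous} the naturality squares and triangles hold up to contiguity, and by Lemma~\ref{lem:homology_invariance_contiguity} they become strict equalities in homology.

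It remains to identify the two modules. $\Nerve\,\mathcal{B}$ is the \v Cech filtration by definition. $\CoNerve\,\mathcal{M}(r)=\Rips(r)$ by Proposition~\ref{prop:clique_properties}: a finite $\sigma$ lies in some maximal $r$-clique iff $\diam\sigma\le r$. By Lemma~\ref{lem:refinement-inclusion}(2), the structure maps of $\CoNerve\,\mathcal{M}$ are inclusions, so they coincide with the Rips inclusions whatever refinement choices are made. Functoriality of $\mathcal{M}$ can be fixed by the total-order proposition.

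The phrase ``among others'' is covered by the remaining cases of Theorem~\ref{thm:main_propagation} together with Proposition~\ref{prop:ball_self_dual}.

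The step I expect to need care is the one glossed in Theorem~\ref{thm:ball-clique-approx}(2). There one needs $\diam B(x,r)\le 2r$, by Proposition~\ref{prop:ball_propeties}(2), not $\le r$. Only then does the ball extend to a maximal $2r$-clique. That extension step should be justified by finiteness of $X$, or by Zorn's lemma otherwise.
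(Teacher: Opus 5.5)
Your proof is correct and follows the paper's route exactly. The paper's proof is just the one-line combination of Theorem~\ref{thm:ball-clique-approx} with Theorem~\ref{thm:main_propagation}; you add the bookkeeping of composing $f_r$ with $\mathcal{B}(r\le 2r)$ to match Definition~\ref{def:c-approximation-contiguity}, and your correction that step (2) of Theorem~\ref{thm:ball-clique-approx} needs $\diam B(x,r)\le 2r$ rather than the stated $\le r$, with finiteness or Zorn's lemma for the extension to a maximal $2r$-clique, is right.
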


\begin{proof}
This follows immediately from Theorem \ref{thm:ball-clique-approx} and Theorem \ref{thm:main_propagation}.
\end{proof}

\section{Covers Quotiented by a Partition}
\label{sec:quotient}

Regardless of which simplicial complex is used, Nerve or Co-Nerve, the number of simplices grows exponentially with scale.
This exponential dependence is an artifact of unrestricted growth of the the underlying covers.
This still remains the dominant bottleneck in any state of the art computational setting.

Our aim is to tame the growth of the underlying covers in a controlled manner.
We do this by creating an opposing force.
Intuitively as the covers grow in size we want to quotient the space proportionately.
As we will see, what we obtain is exactly the original filtration quotient at each scale by a partition that gets coarser with the same scale.

\subsection{Clique Partition}

We will consider partitions that satisfy a single specific property: that the diameter of each block is bounded by the scale.
\begin{definition}[Partition]
A \textbf{partition} of a set $X$ is a family $P = \{C_i\}_{i \in I}$ of non-empty, pairwise disjoint subsets, called blocks, whose union is $X$.
\end{definition}

\begin{definition}[Category of Partitions]
Let $\Cat{Part}(X)$ be the category whose objects are partitions of $X$ and whose morphisms are refinement maps. A morphism $f: P \to Q$ (where $P = \{C_i\}_{i\in I}$ and $Q = \{D_j\}_{j\in J}$) is a function $f: I \to J$ such that $C_i \subseteq D_{f(i)}$ for all $i \in I$. We say $P$ refines $Q$.
\end{definition}

\begin{definition}[Persistent Clique Partition]
\label{def:persistent_clique_partition}
A \textbf{persistent clique partition} is a functor $P: \poset \to \Cat{Part}(X)$ such that for every $r > 0$:
\begin{enumerate}
    \item The partition $P(r)$, is a clique-partition, i.e. it consists of blocks with diameter at most $r$:
    $$ \forall C \in P(r), \quad \diam(C) \leq r $$
    \item If $s \leq t$, then $P(s)$ refines $P(t)$. That is, every block of $P(s)$ is contained in a block of $P(t)$.
\end{enumerate}
\end{definition}
This object has been called \textit{persistent set}~\cite{carlsson10acharacterization} but just like the case with covers we want to make sure to emphasize the extra conditions on the objects.
In the Section \ref{section:partitions} we will go in depth into these partitions and how to construct them but for now we will just assume that we have one persistent clique partition $P$.
\begin{figure}[htb]
    \centering
    \newsavebox{\PartitionOne}
\savebox{\PartitionOne}{
\begin{tikzpicture}[
  edge45/.style={color_r194g202b201, opacity=0.8, thick},
  partition/.style={black, opacity=0.8, thick, dashed, rounded corners=8pt},
  node/.style={circle, fill=black, inner sep=1.50pt}]
  \definecolor{color_r194g202b201}{rgb}{0.761,0.792,0.788}
  
  \draw[edge45] (1.000,3.000) -- (2.000,3.300);
  \draw[edge45] (1.000,3.000) -- (1.500,2.200);
  \draw[edge45] (2.000,3.300) -- (3.000,3.700);
  \draw[edge45] (3.000,3.700) -- (3.800,3.000);
  \draw[edge45] (3.200,1.800) -- (2.500,1.000);

  \draw[partition] (0.500, 3.100) -- (1.200, 3.500) -- (2.000, 2.200) -- (1.200, 1.800) -- cycle;
  \draw[partition] (1.600, 2.900) rectangle (2.400, 3.700);
  \draw[partition] (2.550, 3.600) -- (3.150, 4.200) -- (4.250, 3.100) -- (3.650, 2.500) -- cycle;
  \draw[partition] (3.050, 2.250) -- (3.650, 1.650) -- (2.650, 0.550) -- (2.050, 1.150) -- cycle;

  \node[node] at (1.000,3.000) {};
  \node[node] at (2.000,3.300) {};
  \node[node] at (1.500,2.200) {};
  \node[node] at (3.000,3.700) {};
  \node[node] at (3.800,3.000) {};
  \node[node] at (3.200,1.800) {};
  \node[node] at (2.500,1.000) {};
  
  \node[font=\small] at (0.4, 2.6) {$C_1$};
  \node[font=\small] at (2.0, 4.0) {$C_2$};
  \node[font=\small] at (3.9, 3.8) {$C_3$};
  \node[font=\small] at (3.3, 0.8) {$C_4$};
\end{tikzpicture}
}

\newsavebox{\PartitionTwo}
\savebox{\PartitionTwo}{
\begin{tikzpicture}[
  edge49/.style={color_r194g202b201, opacity=0.8, thick},
  partition/.style={black, opacity=0.8, thick, dashed, rounded corners=8pt},
  node/.style={circle, fill=black, inner sep=1.50pt}]
  \definecolor{color_r194g202b201}{rgb}{0.761,0.792,0.788}
  
  \draw[edge49] (1.000,3.000) -- (2.000,3.300);
  \draw[edge49] (1.000,3.000) -- (1.500,2.200);
  \draw[edge49] (2.000,3.300) -- (1.500,2.200);
  \draw[edge49] (2.000,3.300) -- (3.000,3.700);
  \draw[edge49] (2.000,3.300) -- (3.800,3.000);
  \draw[edge49] (1.500,2.200) -- (3.200,1.800);
  \draw[edge49] (1.500,2.200) -- (2.500,1.000);
  \draw[edge49] (3.000,3.700) -- (3.800,3.000);
  \draw[edge49] (3.000,3.700) -- (3.200,1.800);
  \draw[edge49] (3.800,3.000) -- (3.200,1.800);
  \draw[edge49] (3.200,1.800) -- (2.500,1.000);

  \draw[partition] (0.600, 3.100) -- (2.400, 3.600) -- (1.500, 1.700) -- cycle;
  \draw[partition] (2.550, 3.600) -- (3.150, 4.200) -- (4.250, 3.100) -- (3.650, 2.500) -- cycle;
  \draw[partition] (3.050, 2.250) -- (3.650, 1.650) -- (2.650, 0.550) -- (2.050, 1.150) -- cycle;

  \node[node] at (1.000,3.000) {};
  \node[node] at (2.000,3.300) {};
  \node[node] at (1.500,2.200) {};
  \node[node] at (3.000,3.700) {};
  \node[node] at (3.800,3.000) {};
  \node[node] at (3.200,1.800) {};
  \node[node] at (2.500,1.000) {};

  \node[font=\small] at (1.0, 3.6) {$C_1$};
  \node[font=\small] at (3.9, 3.8) {$C_2$};
  \node[font=\small] at (3.3, 0.8) {$C_3$};
\end{tikzpicture}
}

\newsavebox{\PartitionThree}
\savebox{\PartitionThree}{
\begin{tikzpicture}[
  edge51/.style={color_r194g202b201, opacity=0.8, thick},
  partition/.style={black, opacity=0.8, thick, dashed, rounded corners=8pt},
  node/.style={circle, fill=black, inner sep=1.50pt}]
  \definecolor{color_r194g202b201}{rgb}{0.761,0.792,0.788}
  
  \draw[edge51] (1.000,3.000) -- (2.000,3.300);
  \draw[edge51] (1.000,3.000) -- (1.500,2.200);
  \draw[edge51] (1.000,3.000) -- (3.000,3.700);
  \draw[edge51] (1.000,3.000) -- (3.200,1.800);
  \draw[edge51] (1.000,3.000) -- (2.500,1.000);
  \draw[edge51] (2.000,3.300) -- (1.500,2.200);
  \draw[edge51] (2.000,3.300) -- (3.000,3.700);
  \draw[edge51] (2.000,3.300) -- (3.800,3.000);
  \draw[edge51] (2.000,3.300) -- (3.200,1.800);
  \draw[edge51] (2.000,3.300) -- (2.500,1.000);
  \draw[edge51] (1.500,2.200) -- (3.000,3.700);
  \draw[edge51] (1.500,2.200) -- (3.800,3.000);
  \draw[edge51] (1.500,2.200) -- (3.200,1.800);
  \draw[edge51] (1.500,2.200) -- (2.500,1.000);
  \draw[edge51] (3.000,3.700) -- (3.800,3.000);
  \draw[edge51] (3.000,3.700) -- (3.200,1.800);
  \draw[edge51] (3.800,3.000) -- (3.200,1.800);
  \draw[edge51] (3.800,3.000) -- (2.500,1.000);
  \draw[edge51] (3.200,1.800) -- (2.500,1.000);

  \draw[partition] (0.600, 3.200) -- (2.100, 3.700) -- (3.600, 1.900) -- (2.500, 0.500) -- (1.100, 1.900) -- cycle;
  \draw[partition] (2.550, 3.600) -- (3.150, 4.200) -- (4.250, 3.100) -- (3.650, 2.500) -- cycle;

  \node[node] at (1.000,3.000) {};
  \node[node] at (2.000,3.300) {};
  \node[node] at (1.500,2.200) {};
  \node[node] at (3.000,3.700) {};
  \node[node] at (3.800,3.000) {};
  \node[node] at (3.200,1.800) {};
  \node[node] at (2.500,1.000) {};
  
  \node[font=\small] at (1.3, 1.1) {$C_1$};
  \node[font=\small] at (3.9, 3.8) {$C_2$};
\end{tikzpicture}
}

\begin{tikzpicture}[scale=1.1]
  \draw[thick, ->] (0,0) -- (10,0);
  \node at (10.3,0) {$\mathbb{R}$};
  
  \foreach \x/\label in {1.5/a, 5/b, 8.5/c} {
    \draw (\x,-0.1) -- (\x,0.1);
    \node at (\x,-0.4) {$\label$};
  }
  
  \foreach \x in {1.5, 5, 8.5} {
    \draw[->, gray] (\x,0.2) -- (\x,0.5);
  }

  \node at (1.5, 2) {\usebox{\PartitionOne}};
  \node at (5,   2) {\usebox{\PartitionTwo}};
  \node at (8.5, 2) {\usebox{\PartitionThree}};

\end{tikzpicture}
    \caption{A persistent clique-partition $P$. In this case it is a maximal clique partition (each partition is a maximal clique). We added all the edges whenever $d(x,y)\leq r$.  }
    \label{fig:clique_partition}
\end{figure}
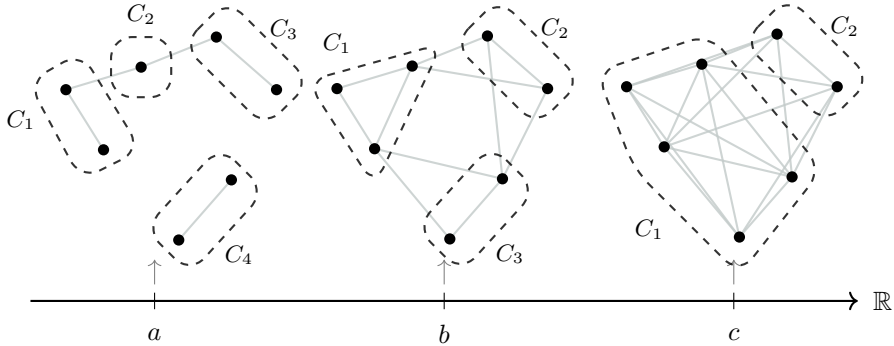

\subsection{Quotients of covers via pushforwards of relations}
The objective is to reduce the size of simplicial complexes in a controlled manner such that we guarantee an interleaving at the homology level.
We do this by considering the usual simplicial complexes quotient a partition.

Our first result shows that we can \textit{first} quotient the underlying cover and only after create a simplicial complex from it (via the nerve or co-nerve).
We show that this is equal to the simplicial complex of the original cover quotiented by the same partition.
This is fundamental for two reasons, first it allows us to focus only on the covers, second is computational since it tells us that we can compute the simplicial complex from a reduced space instead of first creating a full complex and then partitioning it.
Something that is very relevant when the computation of the simplicial complex itself is expensive.

\begin{definition}[Quotient Simplicial Complex]
Let $K$ be a simplicial complex on a vertex set $V$, and let $P$ be a partition of $V$ with projection map $q: V \to V/P$. The \textbf{quotient complex} $K/P$ is the simplicial complex on the vertex set $V/P$ defined as the image of the simplices of $K$ under $q$:
\[
K/P := \{ q(\sigma) \mid \sigma \in K \}.
\]
\end{definition}

\begin{proposition}
    Let $K$ be a simplicial complex on a vertex set $V$, and let $P$ be a partition of $V$ then the quotient simplicial complex $K/P$ a simplicial complex.
\end{proposition}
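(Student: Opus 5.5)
The plan is to verify directly that $K/P$ meets the definition of an abstract simplicial complex: it is a set of finite subsets of $V/P$ that is closed under taking subsets. The vertex set is clear, since for every $v\in V$ the singleton $\{v\}$ is a simplex of $K$ (we take $V$ to be the vertex set of $K$). Hence $\{q(v)\}=q(\{v\})\in K/P$, so every block of $P$ appears as a vertex. Each element $q(\sigma)$ is a finite subset of $V/P$, because it is the image of a finite set under a function.

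The main step is downward closure. Let $\tau\in K/P$ and let $\tau'\subseteq\tau$ be a nonempty subset. By definition $\tau=q(\sigma)$ for some $\sigma\in K$. For each block $[v]\in\tau'$, choose a preimage $v\in\sigma$ with $q(v)=[v]$; this is possible because $\tau'\subseteq q(\sigma)$. Let $\sigma'\subseteq\sigma$ be the set of chosen preimages.

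Since $K$ is closed under taking subsets, $\sigma'\in K$. By construction $q(\sigma')=\tau'$: the inclusion $\supseteq$ holds because every element of $\tau'$ was hit, and $\subseteq$ holds because we only chose preimages of elements of $\tau'$. Therefore $\tau'\in K/P$.

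The only point that needs a little care, and the closest thing to an obstacle, is the choice of preimage. A simplex $\sigma$ may contain several vertices from the same block, so $q$ restricted to $\sigma$ need not be injective. This is handled by taking the full subset $\sigma\cap q^{-1}(\tau')$ instead of a chosen set of representatives, which avoids choice altogether. That set is a subset of $\sigma$, so it lies in $K$, and its image is exactly $\tau'$. Nothing further, such as properties of covers or of the metric, is needed.
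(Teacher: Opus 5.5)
Your proof is correct and follows essentially the paper's argument: write $\tau = q(\sigma)$, take $\sigma' = \sigma \cap q^{-1}(\tau')$, note $\sigma' \in K$ by downward closure, and conclude $q(\sigma') = \tau'$. Your extra remarks on finiteness and on vertices go slightly beyond what the paper checks, and the vertex remark assumes every $v \in V$ spans a $0$-simplex of $K$. Neither is needed for the closure argument.
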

\begin{proof}
We must show that $K/P$ is closed under taking subsets.
Let $\tau \in K/P$ and let $\rho \subseteq \tau$ be a non-empty subset.
By definition, there exists a simplex $\sigma \in K$ such that $q(\sigma) = \tau$.
Consider the subset of $\sigma$ consisting of the preimages of $\rho$ restricted to $\sigma$:
\[
\sigma' = \sigma \cap q^{-1}(\rho) = \{ v \in \sigma \mid q(v) \in \rho \}.
\]
Since $K$ is a simplicial complex and $\sigma' \subseteq \sigma$, we have $\sigma' \in K$.
Because $q$ maps $\sigma$ surjectively onto $\tau$, it maps the restricted set $\sigma'$ surjectively onto $\rho$.
Thus, $q(\sigma') = \rho$, which implies $\rho \in K/P$.
\end{proof}

Recall the Dowker complexes from a relation $R \subseteq X \times Y$ be a relation:
\begin{align*}
C(R) &= \{\sigma \subseteq X : \exists\, y \in Y \text{ such that } (x, y) \in R \text{ for all } x \in \sigma\}, \\
C(R^{op}) &= \{\tau \subseteq Y : \exists\, x \in X \text{ such that } (y, x) \in R^{op} \text{ for all } y \in \tau\}.
\end{align*}

Let $P_X$ be a partition of $X$ with quotient map $q_X : X \to X/P_X$. We define the \textbf{pushforward relation} $R/P_X \subseteq (X/P_X) \times Y$ as:
\begin{definition}[Pushforward relation]
\[
R/P_X := (q_X \times \mathrm{id}_Y)(R) = \{([x], y) : \exists\, x' \in [x] \text{ such that } (x', y) \in R\}.
\]
\end{definition}

\begin{lemma}
\label{lem:quotient_dowker_relation}

Let $R \subseteq X \times Y$ be a binary relation and let $P_X$ be a partition of $X$. Then:
\begin{enumerate}
    \item $C(R)/P_X = C(R/P_X)$
    \item $C(R^{op}) \subseteq C((R/P_X)^{op})$
\end{enumerate}
\end{lemma}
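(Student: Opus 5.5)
Both parts follow by unwinding the definitions of the Dowker complexes, the quotient complex, and the pushforward relation $R/P_X$. The only tool needed is the quotient map $q_X : X \to X/P_X$ and the fact that $q_X$ is surjective on each block. I treat (1) as a double inclusion of simplicial complexes on the vertex set $X/P_X$. Part (2) is a single inclusion on the vertex set $Y$.

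For (1), first show $C(R)/P_X \subseteq C(R/P_X)$. Take $\tau = q_X(\sigma)$ with $\sigma \in C(R)$, and let $y\in Y$ be a witness, so $(x,y)\in R$ for all $x\in\sigma$. Every $[x]\in\tau$ has the form $q_X(x)$ with $x\in\sigma$. Hence $([x],y)\in R/P_X$ by the definition of the pushforward, so $y$ witnesses $\tau\in C(R/P_X)$.

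For the reverse inclusion, take a finite $\tau=\{[x_1],\dots,[x_k]\}\in C(R/P_X)$ with witness $y$. For each $i$ we have $([x_i],y)\in R/P_X$, so by definition there is a representative $x_i'\in[x_i]$ with $(x_i',y)\in R$. Set $\sigma=\{x_1',\dots,x_k'\}$. Then $y$ witnesses $\sigma\in C(R)$, and $q_X(\sigma)=\tau$ because the $[x_i]$ are the distinct blocks. So $\tau\in C(R)/P_X$. This step is the only one requiring any care: a single witness $y$ must serve for all chosen representatives simultaneously. It does, because the pushforward keeps the $Y$-coordinate fixed, so each representative can be chosen independently for the same $y$.

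For (2), take $\tau\subseteq Y$ in $C(R^{op})$ with witness $x\in X$, so $(x,y)\in R$ for all $y\in\tau$. Applying $q_X\times \mathrm{id}_Y$ gives $([x],y)\in R/P_X$ for every $y\in\tau$. Thus $[x]$ is a witness for $\tau$ in $C((R/P_X)^{op})$. Both complexes live on the same vertex set $Y$, so this is an inclusion.

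I would close by remarking why (2) is generally not an equality, since this explains the asymmetry in the statement. A witness $[x]$ for $\tau$ in the quotient may be realized by different representatives $x'\in[x]$ for different $y\in\tau$, with no single representative related to all of $\tau$. So $C((R/P_X)^{op})$ can contain strictly more simplices than $C(R^{op})$. A small example with two points in one block, each related to a different $y$, exhibits this.
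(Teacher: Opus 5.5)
Your proof is correct and follows essentially the same route as the paper. The paper proves (1) by double inclusion, pushing the common witness $y$ forward in one direction and, in the other, choosing for each class a representative related to that same $y$. It proves (2) by sending the witness $x$ to $[x]$. Your remark on why (2) can be strict, with a two-point example, is correct and matches the paper's informal explanation of the asymmetry.
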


\begin{proof}
\textbf{Part 1.} We prove both inclusions.

($\subseteq$) We observe that the projection preserves adjacency: if a set of vertices $\sigma$ share a common witness $y$, their equivalence classes share that same witness. Let $\tau \in C(R)/P_X$, so $\tau = q_X(\sigma)$ for some $\sigma \in C(R)$. There exists $y \in Y$ with $(x,y) \in R$ for all $x \in \sigma$. For each $[x] \in \tau$, pick $x' \in \sigma$ with $q_X(x') = [x]$. Then $(x', y) \in R$ and $x' \in [x]$, so $([x], y) \in R/P_X$. Therefore, $\tau \in C(R/P_X)$.

($\supseteq$) To lift a simplex from the quotient back to $X$, we select representatives from each class $[x] \in \tau$ that are guaranteed to connect to the witness $y$. Let $\tau \in C(R/P_X)$. There exists $y \in Y$ such that $([x], y) \in R/P_X$ for all $[x] \in \tau$. For each $[x] \in \tau$, choose $x' \in [x]$ with $(x', y) \in R$. Let $\sigma = \{x' \mid [x] \in \tau\}$. Then $\sigma \in C(R)$ and $q_X(\sigma) = \tau$, so $\tau \in C(R)/P_X$.

\textbf{Part 2.} This inclusion holds because quotienting the domain relaxes the witness condition for the codomain: elements of a simplex $\tau \subseteq Y$ no longer need to connect to the exact same vertex in $X$, but merely to the same equivalence class in $X/P_X$.
Let $\tau \in C(R^{op})$. By definition, $\exists x \in X$ such that $(y, x) \in R^{op}$ for all $y \in \tau$, which means $(x,y) \in R$. Let $[x]$ be the equivalence class of $x$ in $X/P_X$. Because $x \in [x]$ and $(x,y) \in R$, we have $([x], y) \in R/P_X$ by the definition of the pushforward. This implies $(y, [x]) \in (R/P_X)^{op}$ for all $y \in \tau$. Hence $\tau \in C((R/P_X)^{op})$, proving the inclusion.
\end{proof}


We now apply this to a cover $\mathcal{U} = \{U_i\}_{i \in I}$ of $X$ with inclusion relation $R_{\mathcal{U}} \subseteq X \times I$. We may partition either the space $X$ or the index set $I$.

\begin{definition}[Quotient Covers]
\label{def:quotient_covers}
Let $P_X$ and $P_I$ be partitions of $X$ and $I$ respectively.
\begin{enumerate}
    \item The \textbf{quotient cover on the space} is the cover of $X/P_X$ defined by the images under $q_X$:
    \[ \mathcal{U}/ P_X := \{ q_X(U_i) \mid i\in I \}. \quad \text{where} \quad q_X(U_i) = \{[x] : x \in U_i\}\]
    \item The \textbf{quotient cover on the index set} is the cover of $X$ defined by unions over classes in $I/P_I$:
    \[ \mathcal{U} / P_I := \left\{ \bigcup_{i'\in[i]} U_{i'} \;\middle|\; [i] \in I/P_I \right\}. \]
\end{enumerate}
\end{definition}

It is straightforward to verify that the inclusion relations of these covers are exactly the pushforwards of the original relation $R_{\mathcal{U}}$:
\[
R_{\mathcal{U}/P_X} = R_{\mathcal{U}} / P_X 
\quad \text{and} \quad
R_{\mathcal{U}/P_I} = R_{\mathcal{U}} / P_I.
\]
Combining this observation with Lemma \ref{lem:quotient_dowker_relation} yields the following corollary.

\begin{corollary}[Quotients of Nerves and CoNerves]
\label{cor:quotient_of_covers}
For a cover $\mathcal{U}$ and partitions $P_X, P_I$:
\begin{align*}
    \mathrm{CoNerve}(\mathcal{U})/P_X &= \mathrm{CoNerve}(\mathcal{U}/P_X), \\
      \mathrm{Nerve}(\mathcal{U})/P_I&=\mathrm{Nerve}(\mathcal{U}/P_I).
\end{align*}
\end{corollary}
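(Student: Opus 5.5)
The plan is to reduce both identities to Part~1 of Lemma~\ref{lem:quotient_dowker_relation}, applied to the appropriate side of the inclusion relation. Two facts must be established first. The first is the observation stated just before the corollary: $R_{\mathcal{U}/P_X} = R_{\mathcal{U}}/P_X$ and $R_{\mathcal{U}/P_I} = R_{\mathcal{U}}/P_I$. The second is the identification of the Co-Nerve as $C(R_{\mathcal{U}})$ and the Nerve as $C(R_{\mathcal{U}}^{op})$.

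For the Co-Nerve, I first verify the relation identity on the space side by unwinding definitions. $([x],i) \in R_{\mathcal{U}/P_X}$ iff $[x] \in q_X(U_i)$ iff some $x' \in [x]$ lies in $U_i$ iff $([x],i) \in R_{\mathcal{U}}/P_X$. Once this holds, the chain
\[
\CoNerve(\mathcal{U})/P_X = C(R_{\mathcal{U}})/P_X = C(R_{\mathcal{U}}/P_X) = C(R_{\mathcal{U}/P_X}) = \CoNerve(\mathcal{U}/P_X)
\]
follows, with the middle equality given by Lemma~\ref{lem:quotient_dowker_relation}(1). I also check that $\mathcal{U}/P_X$ is genuinely a cover of $X/P_X$: every class $[x]$ contains $x$, and $x$ lies in some $U_i$.

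For the Nerve, the partition now sits on the index set $I$, which is the first coordinate of the transposed relation $R^{op}_{\mathcal{U}} \subseteq I \times X$. So I apply Lemma~\ref{lem:quotient_dowker_relation}(1) to the relation $S = R^{op}_{\mathcal{U}}$ with the partition $P_I$ of its domain, which gives $C(S)/P_I = C(S/P_I)$.

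The remaining step is to identify $S/P_I$ with $R^{op}_{\mathcal{U}/P_I}$. The cover $\mathcal{U}/P_I$ is indexed by $I/P_I$, and its element at $[i]$ is $\bigcup_{i' \in [i]} U_{i'}$. Hence $(x,[i]) \in R_{\mathcal{U}/P_I}$ iff $x \in U_{i'}$ for some $i' \sim i$ iff $([i],x) \in S/P_I$. Therefore $C(S/P_I) = C(R^{op}_{\mathcal{U}/P_I}) = \Nerve(\mathcal{U}/P_I)$, and the left-hand side $C(S)/P_I$ is $\Nerve(\mathcal{U})/P_I$.

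The only point needing care is bookkeeping, not a real obstacle. The paper writes ``$R_{\mathcal{U}}/P_I$'', but the pushforward must be taken along the $I$-coordinate, which means applying the lemma to the transpose rather than to $R$ itself. I will state this transposition explicitly. I will also note that a family indexed by $I/P_I$ may repeat sets, and this causes no problem because the Nerve's vertices are indices, not sets.
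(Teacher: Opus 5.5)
Your proposal is correct and follows the paper's proof: apply Lemma~\ref{lem:quotient_dowker_relation}(1) to $R_{\mathcal{U}}$ with $P_X$ for the Co-Nerve, and to the transpose $R_{\mathcal{U}}^{op} \subseteq I \times X$ with $P_I$ for the Nerve, using $R_{\mathcal{U}}/P_X = R_{\mathcal{U}/P_X}$ and $R_{\mathcal{U}}^{op}/P_I = R_{\mathcal{U}/P_I}^{op}$. The paper calls these relation identities ``straightforward'' without checking them, so your explicit verification, together with your remark that the $P_I$ pushforward must be taken on the transposed relation, supplies the bookkeeping that proof relies on.
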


\begin{proof}
Recall that $CoNrv(\mathcal{U}) = C(R_{\mathcal{U}})$ and $Nrv(\mathcal{U}) = C(R_{\mathcal{U}}^{op})$.

For the first equality, we apply Lemma \ref{lem:quotient_dowker_relation}(1) to the relation $R_{\mathcal{U}}$ and partition $P_X$.
Since $R_{\mathcal{U}/P_X} = R_{\mathcal{U}}/P_X$, the complex of the quotient cover is exactly the quotient of the original complex: $C(R_{\mathcal{U}})/P_X = C(R_{\mathcal{U}}/P_X)$.

For the second equality, we apply Lemma \ref{lem:quotient_dowker_relation}(1) to the transpose relation $R_{\mathcal{U}}^{op} \subseteq I \times X$ and the index partition $P_I$.
This yields $C(R_{\mathcal{U}}^{op})/P_I = C(R_{\mathcal{U}}^{op}/P_I)$.
Recognizing that the pushforward of the transpose exactly defines the transpose relation of the index-quotient cover ($R_{\mathcal{U}}^{op}/P_I = R_{\mathcal{U}/P_I}^{op}$), we conclude $Nrv(\mathcal{U})/P_I = Nrv(\mathcal{U}/P_I)$.
\end{proof}

The impact of this result is a computational one.
If we want to consider a quotient complex we dont need to compute the entire complex and then quotient it.
This result tells us that we can simply compute the simplicial complex on a quotient cover.

\subsection{Pullback Covers}
Ideally we would now like to construct approximations between some persistent cover $\mathcal U$ and its quotient $\mathcal U/P_X$.
We cannot relate these two covers directly since $\mathcal{U}$ is a cover of $X$ while $\mathcal{U}/P_X$ is a cover of $ X/P_X$.
Furthermore the domain of the quotient cover, $X/P_X$, changes with parameter $s$.
Since the underlying space is not fixed, $\mathcal{U}/P_X$ does not form a persistent cover in the sense that the structure maps are not refinement maps.
To do this formally we must relate covers on $X$ to covers on $X/P_X$ using the pullback.
The pullback resolves the issue by lifting the evolving quotient geometry back to the stable space $X$, restoring the necessary functorial properties.

\begin{definition}[Pullback Cover]
\label{def:pullback_cover}
Let $q: X \to Y$ be a surjective map and $\mathcal{V} = \{V_i\}_{i \in I}$ be a cover of $Y$. The \textbf{pullback cover} $q^*(\mathcal{V})$ is a cover of $X$ defined by:
$$q^*(\mathcal{V}) = \{q^{-1}(V_i)\}_{i \in I}$$
\end{definition}

In a persistent setting, the pullback respects the refinement structure.

\begin{proposition}[Functoriality of the Pullback]
\label{prop:pullback-persistence}
Let $\mathcal{U}: \poset \to \Cat{Cov}(X)$ be a persistent cover with refinement maps $f_{s,t}: \mathcal{U}(s) \to \mathcal{U}(t)$ for $s \leq t$.
Let $P: \poset \to \Cat{Part}(X)$ be a persistent partition and $q_s: X \to X/P(s)$ denote the quotient maps.
The pullback cover $\mathcal{U}_P$ at scale $s$ as:
$$ \mathcal{U}_P(s) := q_s^* (\mathcal{U}(s)/P(s)) = \{ q_s^{-1}(q_s(U)) \mid U \in \mathcal{U}(s) \} $$
The collection $\mathcal{U}_P$ forms a persistent cover $\mathcal{U}_P: \poset \to \Cat{Cov}(X)$.
\end{proposition}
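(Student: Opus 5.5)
We must show three things: each $\mathcal{U}_P(s)$ is a cover of $X$; for $s\le t$ there is a refinement map $\mathcal{U}_P(s)\to\mathcal{U}_P(t)$; and these maps respect identities and composition. The plan is to index $\mathcal{U}_P(s)$ by the same index set as $\mathcal{U}(s)$ and reuse the structure maps $f_{s,t}$ of $\mathcal{U}$ unchanged.

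\textbf{Cover property.} For $U\in\mathcal{U}(s)$ the set $q_s^{-1}(q_s(U))$ is the saturation of $U$, the union of all blocks of $P(s)$ meeting $U$. In particular $U\subseteq q_s^{-1}(q_s(U))$. Since $\mathcal{U}(s)$ covers $X$, so does $\mathcal{U}_P(s)$. Equivalently, $\mathcal{U}(s)/P(s)$ covers $X/P(s)$ and the pullback along the surjection $q_s$ covers $X$.

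\textbf{Refinement maps.} Fix $s\le t$ and $U\in\mathcal{U}(s)$, and let $U'=U_{f_{s,t}(U)}\in\mathcal{U}(t)$, so $U\subseteq U'$. Take $x\in q_s^{-1}(q_s(U))$. Then $x$ lies in a block $C\in P(s)$ that meets $U$ at some point $u$. Because $P(s)$ refines $P(t)$, the block $C$ is contained in a single block $D\in P(t)$. Both $x$ and $u$ lie in $D$, and $u\in U\subseteq U'$. Hence $q_t(x)=q_t(u)\in q_t(U')$, that is, $x\in q_t^{-1}(q_t(U'))$. Therefore
\[
q_s^{-1}(q_s(U))\subseteq q_t^{-1}(q_t(U_{f_{s,t}(U)})),
\]
so $f_{s,t}$, read on index sets, is a refinement map $\mathcal{U}_P(s)\to\mathcal{U}_P(t)$. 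This is the only step that uses the persistent structure of $P$. The main point to get right is that it needs only the refinement $P(s)\to P(t)$, namely that $q_t$ factors through $q_s$. It does not use the clique-diameter condition.

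\textbf{Functoriality.} The structure maps of $\mathcal{U}_P$ are, as functions on index sets, exactly those of $\mathcal{U}$. So $f_{s,s}=\mathrm{id}$ and $f_{t,u}\circ f_{s,t}=f_{s,u}$ are inherited from the functoriality of $\mathcal{U}$, and $\mathcal{U}_P$ is a functor $\poset\to\Cat{Cov}(X)$.

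\textbf{Fallback.} The paper's $\mathcal{M}$ is only functorial after fixing a total order. If one were instead given structure maps only up to choice, any choices would still be contiguous by Proposition~\ref{prop:all_refinements_contiguous}, which suffices for the later homological use.
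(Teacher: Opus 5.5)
Your proposal is correct and follows the paper's proof. The key step is identical: the saturation $q_s^{-1}(q_s(U))$ lies in $q_t^{-1}(q_t(f_{s,t}(U)))$ because $P(s)$ refines $P(t)$, so $q_t$ factors through $q_s$. Your explicit checks of the cover property and of the identity and composition axioms fill in what the paper only asserts with ``defines a valid persistent structure.'' These checks work because you index $\mathcal{U}_P(s)$ by the index set of $\mathcal{U}(s)$ and reuse $f_{s,t}$, and that indexing is also what makes the paper's closing map well defined.
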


\begin{proof}
Geometrically, the set $q_s^{-1}(q_s(U))$ is the \textbf{saturation} of $U$ with respect to the partition $P(s)$; it consists of the union of all partition blocks that intersect $U$.
We must construct a refinement map $F_{s,t}: \mathcal{U}_P(s) \to \mathcal{U}_P(t)$ for any $s \leq t$ such that $V \subseteq F_{s,t}(V)$ for all $V \in \mathcal{U}_P(s)$.

Let $V \in \mathcal{U}_P(s)$. By definition, $V = q_s^{-1}(q_s(U))$ for some $U \in \mathcal{U}(s)$.
Since $\mathcal{U}$ is persistent, there exists a set $U' \in \mathcal{U}(t)$ such that $U \subseteq U'$.
We define $V' = q_t^{-1}(q_t(U')) \in \mathcal{U}_P(t)$ as the target of $V$.

We now show that $V \subseteq V'$.
Let $x \in V$. By definition, $q_s(x) \in q_s(U)$, which implies there exists some $u \in U$ such that $q_s(x) = q_s(u)$ (i.e., $x$ and $u$ are in the same block of $P(s)$).
Since $P$ is a persistent partition, $P(s)$ refines $P(t)$. This implies that if $x$ and $u$ are in the same block at scale $s$, they are in the same block at scale $t$.
Therefore, $q_t(x) = q_t(u)$.
Since $u \in U$ and $U \subseteq U'$, we have $u \in U'$.
Thus, $q_t(x) = q_t(u) \in q_t(U')$.
This implies $x \in q_t^{-1}(q_t(U')) = V'$.

Since $x$ was arbitrary, $V \subseteq V'$. The map sending the saturation of $U$ to the saturation of $f_{s,t}(U)$ defines a valid persistent structure.
\end{proof}

An immediate but crucial property of the pullback is that it preserves the intersection patterns of the cover, thereby preserving the Nerve complex.

\begin{lemma}[\cite{dey2017topological}]
\label{lem:pullback_nerve}
Let $q: X \to Y$ be a surjective map and let $\mathcal{V} = \{V_j\}_{j \in J}$ be a cover of $Y$. Then the pullback cover $q^*\mathcal{V} = \{q^{-1}(V_j)\}_{j \in J}$ satisfies:
\[
\mathrm{Nerve}(q^*\mathcal{V}) = \mathrm{Nerve}(\mathcal{V}).
\]
\end{lemma}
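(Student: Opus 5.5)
Both nerves have the same vertex set, the index set $J$, so the plan is to show that they have exactly the same simplices. That is, for every finite $\sigma \subseteq J$ I will prove
\[
\bigcap_{j\in\sigma} q^{-1}(V_j) \neq \emptyset \iff \bigcap_{j\in\sigma} V_j \neq \emptyset .
\]
I will treat the simplices as subsets of $J$ throughout, not as collections of sets. This matters because two indices may carry equal preimage sets without being the same index, and working with $J$ avoids any confusion from that.

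The first step uses that preimages commute with intersections:
\[
\bigcap_{j\in\sigma} q^{-1}(V_j) = q^{-1}\Bigl(\bigcap_{j\in\sigma} V_j\Bigr).
\]
This holds for arbitrary maps and needs no hypothesis.

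The second step is to show that for any $A \subseteq Y$ we have $q^{-1}(A) \neq \emptyset$ if and only if $A \neq \emptyset$. One direction is trivial: a nonempty preimage contains some $x$, and then $q(x) \in A$. The other direction uses surjectivity: if $y \in A$, choose $x \in X$ with $q(x) = y$, and then $x \in q^{-1}(A)$.

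Applying the second step with $A = \bigcap_{j\in\sigma} V_j$ gives the equivalence above, so the two nerves agree simplex by simplex.

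I also want to confirm that $q^*\mathcal{V}$ is a cover of $X$. For $x \in X$ we have $q(x) \in V_j$ for some $j$, because $\mathcal{V}$ covers $Y$. Hence $x \in q^{-1}(V_j)$.

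No step is a real obstacle. The one place that needs care is the surjectivity hypothesis: without it, a nonempty intersection $\bigcap_{j\in\sigma} V_j$ lying outside the image of $q$ would have empty preimage, and the nerve of $q^*\mathcal{V}$ could lose simplices.
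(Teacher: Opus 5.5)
Your proof is correct and is essentially the paper's argument. The paper proves the same two directions directly: a point $x$ in the intersection of the preimages gives $q(x)\in\bigcap V_j$, and surjectivity lifts a point of $\bigcap V_j$ back to $X$. You package these through the identity $\bigcap_{j\in\sigma} q^{-1}(V_j) = q^{-1}\bigl(\bigcap_{j\in\sigma} V_j\bigr)$, and your extra check that $q^*\mathcal{V}$ covers $X$ is a harmless addition the paper leaves implicit.
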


\begin{proof}
A simplex $\{j_1, \ldots, j_k\}$ lies in $\mathrm{Nerve}(\mathcal{V})$ if and only if $\bigcap_{\ell=1}^k V_{j_\ell} \neq \emptyset$. Similarly, it lies in $\mathrm{Nerve}(q^*\mathcal{V})$ if and only if $\bigcap_{\ell=1}^k q^{-1}(V_{j_\ell}) \neq \emptyset$. We show these conditions are equivalent.

\medskip
$(\Rightarrow)$ Suppose $x \in \bigcap_{\ell=1}^k q^{-1}(V_{j_\ell})$. Then $q(x) \in V_{j_\ell}$ for all $\ell$, so $q(x) \in \bigcap_{\ell=1}^k V_{j_\ell}$.

\medskip
$(\Leftarrow)$ Suppose $y \in \bigcap_{\ell=1}^k V_{j_\ell}$. Since $q$ is surjective, there exists $x \in X$ with $q(x) = y$. Then $x \in q^{-1}(V_{j_\ell})$ for all $\ell$, so $x \in \bigcap_{\ell=1}^k q^{-1}(V_{j_\ell})$.
\end{proof}

The simplicial isomorphisms between both Nerves, being the identity, renders the isomorphism natural with respect to the persistence structure.

\begin{lemma}[Persistence Isomorphism]
    \label{lem:pullback_natural}
    Let $\mathcal{U} : (\mathbb{R}_+, \leq) \to \mathbf{Cov}(X)$ be a persistent cover, and let $P$ be a persistent partition of $X$.
    Let $\mathcal{U}_P$ be the persistent pullback cover defined by $\mathcal{U}_P(s) := q_s^*(\mathcal{U}(s)/P(s))$.
    There is a natural isomorphism of persistence modules:
    \[
        \theta_r: H_*\Nerve\;\mathcal{U}_P (r)\xrightarrow{\cong} H_*\Nerve \;\mathcal{U}/P(r)
    \]
\end{lemma}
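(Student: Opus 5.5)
The plan is to prove the statement at the simplicial level, before taking homology. I will show that for every $r$ the two nerves $\Nerve\,\mathcal{U}_P(r)$ and $\Nerve\,(\mathcal{U}/P)(r)$ are literally the same simplicial complex, and that the structure maps of the two persistent objects agree as simplicial maps. Then $\theta_r$ can be taken to be the identity on homology, and naturality is automatic.

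\textbf{Step 1: fix the indexing.} Write $\mathcal{U}(r)=\{U_i\}_{i\in I_r}$. I will index both covers by the same set $I_r$:
\[
(\mathcal{U}/P)(r)=\{q_r(U_i)\}_{i\in I_r},\qquad \mathcal{U}_P(r)=q_r^*\big((\mathcal{U}/P)(r)\big)=\{q_r^{-1}q_r(U_i)\}_{i\in I_r}.
\]
Distinct $i$ may give the same saturated set. Keeping the index set $I_r$ rather than the set of distinct subsets makes the vertex sets of both nerves equal to $I_r$. Since $q_r$ is surjective, Lemma~\ref{lem:pullback_nerve} then gives the equality of complexes $\Nerve\,\mathcal{U}_P(r)=\Nerve\,(\mathcal{U}/P)(r)$ at each scale.

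\textbf{Step 2: the structure maps.} For $r\le s$, the proof of Proposition~\ref{prop:pullback-persistence} defines the refinement map $F_{r,s}$ of $\mathcal{U}_P$ on indices by $i\mapsto f_{r,s}(i)$, where $f_{r,s}$ is the refinement map of $\mathcal{U}$. So $\Nerve(F_{r,s})$ is the vertex map $f_{r,s}\colon I_r\to I_s$.

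The quotient covers $(\mathcal{U}/P)(r)$ live on the varying spaces $X/P(r)$. I will therefore equip $(\mathcal{U}/P)$ with the nerve maps induced by the same $f_{r,s}$, and check that this is simplicial. Take a simplex $\sigma\subseteq I_r$ with $\bigcap_{i\in\sigma}q_r(U_i)\ni[x]_r$. Choose, for each $i\in\sigma$, a point $u_i\in U_i$ with $q_r(u_i)=[x]_r$. Because $P(r)$ refines $P(s)$, all the $u_i$ lie in a common block of $P(s)$. Hence $[x]_s\in\bigcap_{i\in\sigma}q_s(U_{f_{r,s}(i)})$, and $f_{r,s}(\sigma)$ is a simplex. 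Functoriality is inherited from that of $f$.

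With this, both persistent simplicial complexes have identical objects and identical morphisms. The identity is therefore a natural isomorphism, and applying $H_*$ yields $\theta$. If a different, contiguous choice of refinement maps were used on either side, Lemma~\ref{lem:homology_invariance_contiguity} together with Proposition~\ref{prop:all_refinements_contiguous} (for $\mathcal{U}_P$, a genuine persistent cover of $X$) would show that the induced maps on homology are unchanged. So naturality holds regardless of the choices.

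\textbf{Main obstacle.} The real content is not the pointwise equality, which is Lemma~\ref{lem:pullback_nerve}. It is making precise what the persistence structure on $H_*\Nerve\,(\mathcal{U}/P)$ is, given that its covers live on changing spaces. The point to nail down is that the transition maps on the quotient side are defined through the index refinements $f_{r,s}$, compatibly with the maps $X/P(r)\to X/P(s)$. The argument in Step 2 is exactly this compatibility check.
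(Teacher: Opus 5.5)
Your proposal is correct and follows the paper's proof. You index both covers by $I_r$, invoke Lemma~\ref{lem:pullback_nerve} to get equality of the nerves, and take $\theta_r$ to be the identity on homology. The only difference is naturality: the paper appeals to contiguity of refinement maps (Proposition~\ref{prop:all_refinements_contiguous}), whereas you define the quotient-side structure maps by the same $f_{r,s}$ and check simpliciality directly, so the square commutes already at the simplicial level --- a slightly more careful handling of the fact that $(\mathcal{U}/P)(r)$ and $(\mathcal{U}/P)(s)$ cover different spaces.
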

\begin{proof}
    Write $I_s$ for the index set of $\mathcal{U}(s)$. By construction (Definitions \ref{def:quotient_covers} and \ref{def:pullback_cover}), both $(\mathcal{U}/P)(s)$ and $\mathcal{U}_P(s)$ are also indexed by $I_s$, with elements $q_s(U_i)$ and $q_s^{-1}(q_s(U_i))$ respectively for $i \in I_s$.
    Applying Lemma \ref{lem:pullback_nerve} to the surjection $q_s : X \to X/P(s)$ and the cover $(\mathcal{U}/P)(s)$ yields the simplicial equality
    \[
        \Nerve(\mathcal{U}_P(s)) = \Nerve((\mathcal{U}/P)(s))
    \]
    as simplicial complexes on the common vertex set $I_s$.
    We take $\theta_s$ to be the identity induced on homology.
    
    For $r \leq s$, the structural maps on both sides of the diagram are induced by refinement maps between covers indexed over $I_r$ and $I_s$.
    Since any two refinement maps between two given covers are contiguous, they induce the same map on homology).
    The naturality square thus commutes, defining an isomorphism of persistence modules.
\end{proof}

This proposition bridges the theoretical gap between the fixed metric space $X$ and the varying quotient spaces.
It allows us to perform geometric analysis and prove interleavings using the pullback cover $\mathcal{U}_P$ (which resides on the fixed space $X$), while guaranteeing that these results apply to the computationally efficient quotient complex.

Furthermore, by applying Dowker Duality at every scale, this isomorphism extends to the Co-Nerve:
\[
    H_*\mathrm{CoNrv}(\mathcal{U}/P) \cong H_*\mathrm{Nrv}(\mathcal{U}/P) \cong H_*\mathrm{Nrv}(\mathcal{U}_P) \cong H_*\mathrm{CoNrv}(\mathcal{U}_P)
\]

So yet again, if we want to compute quotient simplicial complexes it is enough to look and compare the underlying covers.

\subsection{Quotient Maximal Clique Cover}
We wish to compare the standard maximal-clique filtration $\mathcal{M}$ with the quotient filtration $\mathcal{M}/P$, for a given persistent clique partition $P$.
We consider the pullback cover:
$$ \mathcal{M}_P(r) = q_r^*(\mathcal{M}/P)(r) = \{ q_r^{-1}(q_r(M))\mid M \in \mathcal{M}(r)\}$$

\begin{theorem}[$3$-Multiplicative Approximation]
\label{thm:mult-approx}
There exists a $3$-approximation up to contiguity between the maximal-clique cover $\mathcal{M}$ and $\mathcal{M}_P$,
That is, there exist refinement maps:
\begin{align*}
         f_r &: \mathcal{M}(r) \to\mathcal{M}_P(r) \tag{1}\\
        g_r &: \mathcal{M}_P(r) \to \mathcal{M}(3r) \tag{2}
\end{align*}
\end{theorem}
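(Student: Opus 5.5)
We build both families of refinement maps directly from the definitions. Existence is enough: by Definition~\ref{def:c-approximation-contiguity} and Proposition~\ref{prop:all_refinements_contiguous}, no compatibility between the maps needs checking. The index set of $\mathcal{M}_P(r)$ is that of $\mathcal{M}(r)$, since each element has the form $q_r^{-1}(q_r(M))$ for some $M\in\mathcal{M}(r)$.

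\textbf{Step 1: the map $f_r$.} Set $f_r(M)=q_r^{-1}(q_r(M))$, i.e.\ send each maximal clique to its own saturation. Saturation is extensive: every $x\in M$ satisfies $q_r(x)\in q_r(M)$, so $M\subseteq q_r^{-1}(q_r(M))$. Hence $f_r$ is a refinement map $\mathcal{M}(r)\to\mathcal{M}_P(r)$, and this step needs no metric input at all.

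\textbf{Step 2: the map $g_r$.} Take $V=q_r^{-1}(q_r(M))$ with $M\in\mathcal{M}(r)$. We bound $\diam(V)$ by $3r$. Let $x,y\in V$. By the saturation description in the proof of Proposition~\ref{prop:pullback-persistence}, there are $u,v\in M$ with $x$ in the block of $u$ and $y$ in the block of $v$ in $P(r)$. Since $P(r)$ is a clique partition (Definition~\ref{def:persistent_clique_partition}), $d(x,u)\le r$ and $d(v,y)\le r$. Since $\diam(M)\le r$, also $d(u,v)\le r$. The triangle inequality then gives
\[
d(x,y)\le d(x,u)+d(u,v)+d(v,y)\le 3r .
\]
So $V$ is a $3r$-clique. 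By Proposition~\ref{prop:clique_properties} (equivalently, by extending $V$ to a maximal one, as in Lemma~\ref{lem:maximal_clique_refinement}), $V$ is contained in some $N\in\mathcal{M}(3r)$. Choosing one such $N$ for each $V$, for instance the lexicographically minimal one, defines $g_r:\mathcal{M}_P(r)\to\mathcal{M}(3r)$.

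\textbf{Step 3: conclusion.} The definition of $c$-approximation up to contiguity asks for maps $\mathcal{M}(r)\to\mathcal{M}_P(3r)$ and $\mathcal{M}_P(r)\to\mathcal{M}(3r)$. Composing $f_r$ with the persistent structure map $\mathcal{M}_P(r)\to\mathcal{M}_P(3r)$ from Proposition~\ref{prop:pullback-persistence} gives the first; $g_r$ is the second. Every diagram then commutes up to contiguity automatically by Proposition~\ref{prop:all_refinements_contiguous}.

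The only substantive point is the diameter bound in Step 2. What needs care is that a saturated set can join points through two different blocks, so the bound uses both ends: one factor $r$ for each block and one for the clique itself. This is also why the constant is $3$ rather than $2$.
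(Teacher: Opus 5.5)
Your proposal is correct and follows the paper's proof: the identity on indices gives $f_r$ via $M\subseteq q_r^{-1}(q_r(M))$, and $g_r$ comes from the same three-term triangle inequality through representatives in $M$ ($r$ for each partition block, $r$ for the clique), giving $\diam\le 3r$, followed by extension to a maximal $3r$-clique. Your Step 3 is a small addition: it composes $f_r$ with the structure map $\mathcal{M}_P(r)\to\mathcal{M}_P(3r)$ to match the shifted form of Definition~\ref{def:c-approximation-contiguity}, which the paper leaves implicit.
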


\begin{proof}
$\mathcal{M}(r) \to \mathcal{M}_P(r)$:
Let $M \in \mathcal{M}(r)$. The corresponding element in the pullback cover is $U = q_r^{-1}(q_r(M))$.
Since $M \subseteq q_r^{-1}(q_r(M))$ trivially, the identity map on the index set is a refinement map.

$\mathcal{M}_P(r) \to \mathcal{M}(3r)$:
Let $U \in \mathcal{M}_P(r)$. By definition, $U = q_r^{-1}(q_r(M))$ for some maximal $r$-clique $M$.
We bound the diameter of $U$. Let $x, y \in U$.
By definition of the inverse image, $q_r(x) \in q_r(M)$ and $q_r(y) \in q_r(M)$.
This implies that $x$ belongs to some partition block $C_x \in P(r)$ such that $C_x \cap M \neq \emptyset$, and similarly for $y$ with some block $C_y$.

Let $x' \in C_x \cap M$ and $y' \in C_y \cap M$.
By the triangle inequality:
$$ d(x, y) \leq d(x, x') + d(x', y') + d(y', y) $$
Using the properties of the partition and the clique:
\begin{itemize}
    \item $d(x, x') \leq r$ because $x, x'$ are in the same block $C_x \in P(r)$ and $\diam(C_x) \leq r$.
    \item $d(y', y) \leq r$ because $y, y'$ are in the same block $C_y \in P(r)$ and $\diam(C_y) \leq r$.
    \item $d(x', y') \leq r$ because $x', y' \in M$ and $M$ is an $r$-clique.
\end{itemize}
Thus, $\diam(U) \leq 3r$.

By Proposition \ref{prop:clique_properties}, $U$ is contained in some maximal $3r$-clique.
\end{proof}

This result on covers lifts immediately to the homology of the associated complexes.

\begin{corollary}
Let $P$ be a persistent clique partition. There is a $3$-approximation between the persistence modules of the Vietoris-Rips filtration and the quotient Vietoris-Rips filtration:
$$ H_*\CoNerve\; \mathcal{M} \xleftrightarrow{3} H_*\CoNerve \;(\mathcal{M}/P) $$
\end{corollary}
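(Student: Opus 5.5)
The plan is to chain three results already established. First, Theorem~\ref{thm:mult-approx} says the persistent covers $\mathcal{M}$ and $\mathcal{M}_P$ are $3$-approximated up to contiguity. The refinement maps $f_r:\mathcal{M}(r)\to\mathcal{M}_P(r)\subseteq$ (composed with the structure refinement $\mathcal{M}_P(r)\to\mathcal{M}_P(3r)$ from Proposition~\ref{prop:pullback-persistence}) and $g_r:\mathcal{M}_P(r)\to\mathcal{M}(3r)$ supply the two families required by Definition~\ref{def:c-approximation-contiguity}. Theorem~\ref{thm:mult-approx} gives $f_r$ at the same scale, so I will shift it to scale $3r$ with the persistence refinement of $\mathcal{M}_P$; a composite of refinement maps is again a refinement map. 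Applying Theorem~\ref{thm:main_propagation}(2) then yields a $3$-approximation
\[
H_*\CoNerve\;\mathcal{M} \xleftrightarrow{3} H_*\CoNerve\;\mathcal{M}_P .
\]

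Second, I transport this to the quotient side through natural isomorphisms. The chain displayed after Lemma~\ref{lem:pullback_natural} gives
\[
H_*\CoNerve(\mathcal{M}_P)\cong H_*\Nerve(\mathcal{M}_P)\cong H_*\Nerve(\mathcal{M}/P)\cong H_*\CoNerve(\mathcal{M}/P).
\]
The outer two isomorphisms are the functorial Dowker isomorphisms of \cite{virk2019rips}, which are natural with respect to refinement maps. The middle one is Lemma~\ref{lem:pullback_natural}. Composing them gives a natural isomorphism $\theta: H_*\CoNerve\,\mathcal{M}_P \Rightarrow H_*\CoNerve\,(\mathcal{M}/P)$. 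Lemma~\ref{lem:iso-transport}, applied with $F=H_*\CoNerve\,\mathcal{M}_P$ and $G=H_*\CoNerve\,\mathcal{M}$, then transfers the $3$-approximation to the claimed one. Alternatively, one could use part (3) of Theorem~\ref{thm:main_propagation} for the $\Nerve(\mathcal{M}_P)$ side and transport only once or twice.

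The main obstacle is the naturality of the outer Dowker isomorphism for $\mathcal{M}/P$. Here $\mathcal{M}/P(s)$ is a cover of the varying space $X/P(s)$, so its structure maps are not refinement maps inside a single $\Cat{Cov}(X)$, and Virk's naturality does not apply verbatim. My plan is to define the persistence structure of $\CoNerve(\mathcal{M}/P)$ as the simplicial maps induced by the quotient projections $X/P(s)\to X/P(t)$; via Corollary~\ref{cor:quotient_of_covers} this is the quotient tower $\CoNerve(\mathcal{M}(s))/P(s)$. I will then check that the square relating these maps to the Nerve side commutes on homology. To do this, I will view the pair (index map, quotient projection) as a morphism of relations $R_{\mathcal{M}/P(s)}\to R_{\mathcal{M}/P(t)}$, and invoke the general functorial Dowker theorem for relation morphisms (maps on both factors preserving the relation), which again commutes up to homotopy. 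If that is unavailable, the fallback is to argue directly that both composites are contiguous, using the fact that any two choices of refinement are contiguous (Proposition~\ref{prop:all_refinements_contiguous}).
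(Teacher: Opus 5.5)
Your proposal is correct and follows the paper's proof: Theorem~\ref{thm:mult-approx} with Theorem~\ref{thm:main_propagation}(2) gives $H_*\CoNerve\,\mathcal{M}\xleftrightarrow{3}H_*\CoNerve\,\mathcal{M}_P$, and the Dowker--pullback--Dowker chain of natural isomorphisms, applied via Lemma~\ref{lem:iso-transport}, finishes it. Your two additions fill in points the paper passes over: shifting $f_r$ to scale $3r$ with the structure map of $\mathcal{M}_P$, and justifying naturality of the last Dowker isomorphism for the varying-space cover $\mathcal{M}/P$ by treating (index map, quotient projection) as a morphism of relations (your contiguity fallback can be made rigorous by choosing witnesses on the barycentric subdivision).
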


\begin{proof}
By Theorem \ref{thm:mult-approx}, we have the interleaving of covers $\mathcal{M}\xleftrightarrow{3} \mathcal{M}_P$.
Applying the homology functor $H_*\CoNerve(-)$ yields:
$$ H_*\CoNerve\mathcal{M} \xleftrightarrow{3} H_*\CoNerve \mathcal{M}_P$$
We then take the natural isomorphisms from the Dowker Duality and Lemma \ref{lem:pullback_natural}:
\begin{align*}
    H_*\CoNerve\mathcal{M}_P &\cong H_*\Nerve\mathcal{M}_P & (\text{Dowker}) \\
    &\cong H_*\Nerve(\mathcal{M}/P) & (\text{Lemma } \ref{lem:pullback_natural}) \\
    &\cong H_*\CoNerve(\mathcal{M}/P) & (\text{Dowker})
\end{align*}
from which we conclude:
$$
 H_*\CoNerve \mathcal M\xleftrightarrow{3} H_*\CoNerve(\mathcal{M}/P)
$$
\end{proof}

\begin{figure}[htb]
    \centering
    \newsavebox{\QuotientMaxOne}
\savebox{\QuotientMaxOne}{
\begin{tikzpicture}[
  clique_cover/.style={fill=color_r20g101b93, opacity=0.2},
  edge65/.style={color_r194g202b201, opacity=0.8, thick},
  edge66/.style={black, opacity=0.8, thick},
  partition/.style={black, opacity=0.8, thick, dashed, rounded corners=8pt},
  node/.style={circle, fill=black, inner sep=1.50pt},scale=0.7]
  
  \definecolor{color_r194g202b201}{rgb}{0.761,0.792,0.788}
  \definecolor{color_r20g101b93}{rgb}{0.078,0.396,0.365}
  
  \fill[clique_cover] (2.000,1.600) circle (0.881);
  \fill[clique_cover] (2.850,1.400) circle (0.632);
  \fill[clique_cover] (1.500,2.833) circle (0.784);
  \fill[clique_cover] (2.500,3.500) circle (0.639);
  \fill[clique_cover] (3.400,3.350) circle (0.632);
  \fill[clique_cover] (3.500,2.400) circle (0.771);

  \draw[partition] (0.600, 3.100) -- (2.400, 3.600) -- (1.500, 1.700) -- cycle;
  
  \draw[partition] (2.550, 3.600) -- (3.150, 4.200) -- (4.250, 3.100) -- (3.650, 2.500) -- cycle;
  
  \draw[partition] (3.050, 2.250) -- (3.650, 1.650) -- (2.650, 0.550) -- (2.050, 1.150) -- cycle;

  \node[node] at (1.000,3.000) {};
  \node[node] at (2.000,3.300) {};
  \node[node] at (1.500,2.200) {};
  \node[node] at (3.000,3.700) {};
  \node[node] at (3.800,3.000) {};
  \node[node] at (3.200,1.800) {};
  \node[node] at (2.500,1.000) {};
  
  \node[above] at (1.000, 3.500) {$C_1$};
  \node[above right] at (3.800, 3.800) {$C_2$};
  \node[below right] at (3.300, 0.800) {$C_3$};
\end{tikzpicture}
}

\newsavebox{\QuotientMaxTwo}
\savebox{\QuotientMaxTwo}{
\begin{tikzpicture}[
scale=0.5,
  ball_cover/.style={fill=color_r20g101b93, opacity=0.2},
  node/.style={circle, fill=black, inner sep=1.50pt}]
  \definecolor{color_r20g101b93}{rgb}{0.078,0.396,0.365}
  
  \coordinate (A) at (1.000, 1.000);
  \coordinate (B) at (5.000, 1.000);
  \coordinate (C) at (3.000, 4.464); 


  \begin{scope}
    \fill[ball_cover] 
      (3.000, 1.000) ellipse (2.3 and 0.7); 
  \end{scope}

  \begin{scope}
    \fill[ball_cover, rotate around={60:(2.000, 2.732)}] 
      (2.000, 2.732) ellipse (2.3 and 0.7);
  \end{scope}

  \begin{scope}
    \fill[ball_cover, rotate around={-60:(4.000, 2.732)}] 
      (4.000, 2.732) ellipse (2.3 and 0.7);
  \end{scope}

  \begin{scope}
    \fill[ball_cover] (A) circle (0.650);
  \end{scope}
  \begin{scope}
    \fill[ball_cover] (B) circle (0.650);
  \end{scope}
  \begin{scope}
    \fill[ball_cover] (C) circle (0.650);
  \end{scope}

  \node[node] at (A) {};
  \node[node] at (B) {};
  \node[node] at (C) {};

  \node[above] at (A) {$C_1$};
  \node[above] at (B) {$C_3$};
  \node[right] at (C) {$C_2$};
\end{tikzpicture}

}

\newsavebox{\QuotientMaxThree}
\savebox{\QuotientMaxThree}{
\begin{tikzpicture}[
 point/.style={circle, fill=black, inner sep=1.5pt},
 connection/.style={thick, opacity=0.8},
 scale=0.4
]
  \node[point] (A) at (0, 0) {}; 
  \node[point] (F) at (3, 0) {}; 
  \node[point] (H) at (1.5, 2.598) {}; 
   \foreach \label in {A, F, H} {
        \node[point] (\label) at (\label) {};
    }
    \node[above=3pt] at (H) {$C_2$};
    \node[left=3pt] at (A) {$C_1$};
    \node[right=3pt] at (F) {$C_3$};
    \draw[connection] (A) -- (F);
    \draw[connection] (F) -- (H);
    \draw[connection] (A) -- (H);
\end{tikzpicture}
}

\newsavebox{\QuotientMaxFour}
\savebox{\QuotientMaxFour}{
\begin{tikzpicture}[
 point/.style={circle, fill=black, inner sep=1.5pt},
 connection/.style={thick, opacity=0.8},
 scale=0.3 
]
  \node[point] (N1) at (90:3.5) {}; 
  \node[point] (N2) at (30:3.5) {};
  \node[point] (N3) at (330:3.5) {};
  \node[point] (N4) at (270:3.5) {};
  \node[point] (N5) at (210:3.5) {};
  \node[point] (N6) at (150:3.5) {};

  \fill[red!20] (N1.center) -- (N2.center) -- (N6.center) -- cycle;
  \fill[red!20] (N2.center) -- (N3.center) -- (N4.center) -- cycle;
  \fill[red!20] (N4.center) -- (N5.center) -- (N6.center) -- cycle;

  \draw[connection] (N1) -- (N2);
  \draw[connection] (N2) -- (N3);
  \draw[connection] (N3) -- (N4);
  \draw[connection] (N4) -- (N5);
  \draw[connection] (N5) -- (N6);
  \draw[connection] (N6) -- (N1);

  \draw[connection] (N2) -- (N4);
  \draw[connection] (N4) -- (N6);
  \draw[connection] (N6) -- (N2);
  
  \node[point] at (N1) {};
  \node[point] at (N2) {};
  \node[point] at (N3) {};
  \node[point] at (N4) {};
  \node[point] at (N5) {};
  \node[point] at (N6) {};

\end{tikzpicture}
}

\begin{tikzpicture}[scale=1.1]
  \node at (1.5, 0){\usebox{\QuotientMaxOne}};
  \node at (1.5,-2) {(a) $\mathcal M(s)$ and $P(s)$};
  \node at (5, 0) {\usebox{\QuotientMaxTwo}};
  \node at (5,-2) {(b) $(\mathcal{M}/P)(s)$};
  \node at (8, 0) {\usebox{\QuotientMaxThree}};
  \node at (8,-2) {(c) $\CoNerve(\mathcal M/P)(s)$};
  \node at (11, 0) {\usebox{\QuotientMaxFour}};
  \node at (11,-2) {(d) $\Nerve(\mathcal M/P)(s)$};
\end{tikzpicture}
    \caption{Example of an instance of the maximal clique cover $\mathcal M$ quotient some clique partition $P=\{C_1,C_2,C_3\}$ for a given scale $s>0$. \textbf{b)} shows the resulting cover while \textbf{c)} and \textbf{d)} show the simplicial complexes constructed by taking the co-nerve and nerve of the quotient cover.}
    \label{fig:quotient_max_clique}
\end{figure}
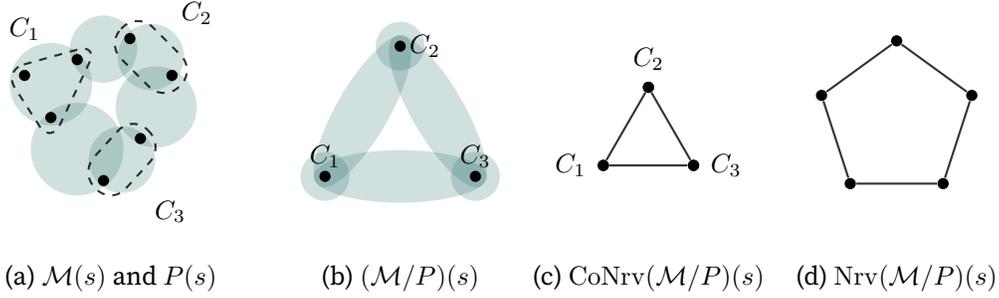

\subsection{Quotient Ball Cover}

Since the index set is the space itself, any partition $P_X$ of the space simultaneously partitions the index set.
This yields two quotient covers: $\mathcal{B}/P_X$, a cover of $X/P$ indexed by points, and $\mathcal{B}/P_I$, a cover of $X$ indexed by clusters.  The symmetry of~$d$ ensures that the inclusion relations of these two covers are
transposes of each other:
\[
  R_{\mathcal{B}/P_I} = R_{\mathcal{B}/P_X}^{op}.
\]
To see this, note that for any $x \in X$ and $[y] \in X/P_X$:
\begin{align*}
  x \in \textstyle\bigcup_{z \in [y]} B(z,r)
  &\iff \exists\, z \in [y] \text{ with } d(x,z) \leq r \\
  &\iff \exists\, z \in [y] \text{ with } d(z,x) \leq r \\
  &\iff [y] \cap B(x,r) \neq \emptyset,
\end{align*}
Because these relations are exact transposes, applying the Nerve and Co-Nerve functors yields a duality between the covers.

\begin{proposition}
Given a ball cover $\mathcal B=\{B(x,r)\}_{x \in X}$ at scale $r>0$ and a partition $P_X$ of $X$ we have:
\begin{align*}
    \CoNerve(\mathcal{B}/P_X) &= \Nerve(\mathcal{B}/P_I)\\
    \Nerve(\mathcal{B}/P_X) &= \CoNerve(\mathcal{B}/P_I)
\end{align*}
\end{proposition}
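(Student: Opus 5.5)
The plan is to reduce both identities to the transpose relation $R_{\mathcal{B}/P_I} = R_{\mathcal{B}/P_X}^{op}$, which was verified just before the statement, and then apply the Dowker-complex descriptions of the Co-Nerve and Nerve. Recall that for any cover $\mathcal{V}$ with inclusion relation $R_{\mathcal{V}}$ we have $\CoNerve(\mathcal{V}) = C(R_{\mathcal{V}})$ and $\Nerve(\mathcal{V}) = C(R_{\mathcal{V}}^{op})$. Both sides of each claimed equality are then Dowker complexes of the same relation, and equality follows formally.

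\textbf{Step 1: fix the indexing conventions.} The cover $\mathcal{B}/P_X$ is the cover of $X/P_X$ indexed by points $x \in X$, with elements $q_X(B(x,r))$. Its relation is therefore
\[
R_{\mathcal{B}/P_X} \subseteq (X/P_X) \times X, \qquad ([y], x) \in R_{\mathcal{B}/P_X} \iff [y] \cap B(x,r) \neq \emptyset .
\]
The cover $\mathcal{B}/P_I$ is the cover of $X$ indexed by classes $[y] \in X/P_X$, with elements $\bigcup_{z\in[y]} B(z,r)$. Its relation is
\[
R_{\mathcal{B}/P_I} \subseteq X \times (X/P_X), \qquad (x,[y]) \in R_{\mathcal{B}/P_I} \iff \exists z \in [y],\ d(x,z) \le r .
\]
The computation preceding the statement, which uses the symmetry of $d$, shows these are exact transposes: $R_{\mathcal{B}/P_I} = R_{\mathcal{B}/P_X}^{op}$. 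I would restate this briefly and make sure the ordering of factors matches the convention $R_{\mathcal U}\subseteq \text{space}\times\text{index}$.

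\textbf{Step 2: compute the four complexes.} Write $R := R_{\mathcal{B}/P_X}$. Then
\[
\CoNerve(\mathcal{B}/P_X) = C(R) \quad\text{and}\quad \Nerve(\mathcal{B}/P_I) = C(R_{\mathcal{B}/P_I}^{op}) = C((R^{op})^{op}) = C(R),
\]
so the first identity holds. Similarly,
\[
\Nerve(\mathcal{B}/P_X) = C(R^{op}) \quad\text{and}\quad \CoNerve(\mathcal{B}/P_I) = C(R_{\mathcal{B}/P_I}) = C(R^{op}),
\]
which gives the second identity. Both complexes in the first pair live on the vertex set $X/P_X$, and both in the second pair live on $X$, so these are genuine equalities of simplicial complexes.

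\textbf{Main obstacle.} The only delicate point is bookkeeping rather than mathematics. Different points $x$ can give the same set $q_X(B(x,r))$, so $\mathcal{B}/P_X$ must be treated as an indexed family over $X$, not as a set of subsets; otherwise the Nerve's vertex set would collapse. I would state explicitly that covers are indexed families, as in the paper's definition, so that the identification of $\Nerve(\mathcal{B}/P_X)$ with a complex on $X$ is legitimate. As an optional sanity check, I would note that when $P_X$ is the discrete partition both identities reduce to Proposition~\ref{prop:ball_self_dual}.
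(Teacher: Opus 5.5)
Your proposal is correct and matches the paper's proof: both reduce the two equalities to the transpose identity $R_{\mathcal{B}/P_I} = R_{\mathcal{B}/P_X}^{op}$, combined with $\CoNerve(\mathcal{U}) = C(R_{\mathcal{U}})$ and $\Nerve(\mathcal{U}) = C(R_{\mathcal{U}}^{op})$. Your remark that $\mathcal{B}/P_X$ must be treated as a family indexed by $X$ (so that $\Nerve(\mathcal{B}/P_X)$ really lives on the vertex set $X$) is a useful clarification that the paper leaves implicit.
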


\begin{proof}
\begin{align*}
 \CoNerve(\mathcal{B}/P_X) &= C(R_{\mathcal{B}/P_X}) = C(R^{op}_{\mathcal{B}/P_I})= \Nerve(\mathcal{B}/P_I)
\end{align*}
Similarly:
\begin{align*}
 \Nerve(\mathcal{B}/P_X) &= C(R^{op}_{\mathcal{B}/P_X}) = C(R_{\mathcal{B}/P_I})= \CoNerve(\mathcal{B}/P_I)
\end{align*}
\end{proof}

\begin{corollary}
All four persistent modules associated with the quotient ball covers are isomorphic:
$$ H_*\Nerve(\mathcal{B}/P_I) \cong H_*\CoNerve(\mathcal{B}/P_I) \cong H_*\Nerve(\mathcal{B}/P_X) \cong H_*\CoNerve(\mathcal{B}/P_X) $$
\end{corollary}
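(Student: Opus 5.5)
The plan is to chain two kinds of isomorphisms: the simplicial equalities from the preceding proposition, and the Dowker isomorphisms. Both must hold at every scale and be natural with respect to the persistence structure. I read the statement as concerning the persistent covers $r\mapsto\mathcal{B}(r)/P_I(r)$ and $r\mapsto\mathcal{B}(r)/P_X(r)$, where $P$ is a persistent clique partition and $P_I=P_X=P(r)$ because the index set is $X$.

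\textbf{Step 1: the cross identifications.} At each scale $r$, the preceding proposition gives literal equalities of simplicial complexes,
\[
\CoNerve(\mathcal{B}/P_X)(r)=\Nerve(\mathcal{B}/P_I)(r),\qquad \Nerve(\mathcal{B}/P_X)(r)=\CoNerve(\mathcal{B}/P_I)(r).
\]
These already give isomorphisms of homology at each scale. Naturality needs more care, because the structure maps are induced by different cover maps on the two sides. I would argue as follows.
\begin{itemize}
\item On $\mathcal{B}/P_I$, the structure maps are refinements of covers of $X$, acting on the index set $X/P(r)$ by sending a block to the block of $P(s)$ containing it.
\item On $\mathcal{B}/P_X$, the space $X/P(r)$ changes with $r$, so its structure maps act on points through the same map $X/P(r)\to X/P(s)$.
\end{itemize}
Both therefore induce the same vertex map $[x]_r\mapsto[x]_s$. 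So the identity maps form a strict natural isomorphism, already at the simplicial level. Alternatively, I would pass through the pullback covers and Lemma~\ref{lem:pullback_natural} to sidestep the non-fixed base space, using Proposition~\ref{prop:all_refinements_contiguous} to absorb any choices.

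\textbf{Step 2: Dowker duality.} Applying Dowker duality to $\mathcal{B}/P_I$, which is a genuine persistent cover of $X$, gives a natural isomorphism
\[
H_*\Nerve(\mathcal{B}/P_I)\cong H_*\CoNerve(\mathcal{B}/P_I).
\]
Naturality here comes from Virk's functorial Dowker theorem for refinements, exactly as used in Theorem~\ref{thm:main_propagation}. Composing the Step~1 identifications with this isomorphism links all four modules:
\[
H_*\Nerve(\mathcal{B}/P_I)\cong H_*\CoNerve(\mathcal{B}/P_I)=H_*\Nerve(\mathcal{B}/P_X),
\]
and
\[
H_*\CoNerve(\mathcal{B}/P_X)=H_*\Nerve(\mathcal{B}/P_I).
\]

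\textbf{Main obstacle.} The hard part is justifying naturality for the $P_X$-side modules, whose underlying space changes with scale, so they are not persistent covers in $\Cat{Cov}(X)$. I would handle this by checking directly that the vertex maps $[x]_r\mapsto[x]_s$ are simplicial for both $\Nerve$ and $\CoNerve$ of $\mathcal{B}/P_X$. For the co-nerve this follows from ball inclusions together with block coarsening. Once that is checked, the equalities of Step~1 are strictly compatible with the structure maps and the chain of isomorphisms is natural.
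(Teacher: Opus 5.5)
Your proposal is correct and is essentially the paper's argument. The paper leaves the corollary without a written proof, resting on the preceding proposition's equalities $\CoNerve(\mathcal{B}/P_X)=\Nerve(\mathcal{B}/P_I)$ and $\Nerve(\mathcal{B}/P_X)=\CoNerve(\mathcal{B}/P_I)$ combined with Dowker duality; your choice to apply the functorial Dowker theorem on the $P_I$ side (a genuine persistent cover of $X$), and your check that the identifications commute with the structure maps, supply the naturality the paper leaves implicit.

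One small slip: $\Nerve(\mathcal{B}/P_X)$ has vertex set $X$ (the index set), so its structure maps are the identity on $X$, not $[x]_r\mapsto[x]_s$. Simpliciality is still immediate, since a block of $P(r)$ meeting every $B(x,r)$ lies in a block of $P(s)$ meeting every $B(x,s)$.
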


We now establish a direct comparison between the standard Ball cover $\mathcal{B}$ and the quotient Ball cover $\mathcal{B}/P_X$.
Recall the definition of the pullback cover at scale $r$:
$$ \mathcal{B}_P(r) = q_r^*(\mathcal{B}/P_X(r)) = \{ q_r^{-1}(q_r(B(x, r))) \}_{x \in X} $$
The element $U_x \in \mathcal{B}_P(r)$ corresponding to index $x$ consists of all points $y \in X$ whose cluster $[y]$ intersects the ball $B(x, r)$.

\begin{theorem}[Ball-Quotient Interleaving]
\label{thm:ball-quotient-interleaving}
The Ball cover $\mathcal{B}$ and the pullback of the quotient Ball cover $\mathcal{B}_P$ are $2$-approximated.
That is, there exist refinement maps:
\begin{align*}
         f_r &: \mathcal{B}(r) \to\mathcal{B}_P(r) \tag{1}\\
        g_r &: \mathcal{B}_P(r) \to \mathcal{B}(2r) \tag{2}
\end{align*}
\end{theorem}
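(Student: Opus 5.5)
The plan is to mirror the proof of Theorem~\ref{thm:mult-approx}. In both directions I would use the identity on the common index set $X$, and then invoke Proposition~\ref{prop:all_refinements_contiguous} so that these maps form a $2$-approximation up to contiguity in the sense of Definition~\ref{def:c-approximation-contiguity}. Both $\mathcal{B}(r)$ and $\mathcal{B}_P(r)$ are indexed by $X$. The element of $\mathcal{B}_P(r)$ with index $x$ is the saturation
\[
U_x = q_r^{-1}\bigl(q_r(B(x,r))\bigr),
\]
that is, the union of all blocks of $P(r)$ meeting $B(x,r)$. So it suffices to check two containments:
\[
B(x,r)\subseteq U_x \qquad\text{and}\qquad U_x\subseteq B(x,2r).
\]

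For $f_r$, the first containment is immediate: every point of $B(x,r)$ lies in its own block, and that block meets $B(x,r)$. Exactly as in the first half of Theorem~\ref{thm:mult-approx}, setting $f_r(x)=x$ gives a refinement $\mathcal{B}(r)\to\mathcal{B}_P(r)$.

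For $g_r$, take $y\in U_x$. By definition, the block $[y]\in P(r)$ meets $B(x,r)$, so choose $z\in[y]$ with $d(x,z)\le r$. Since $P$ is a persistent clique partition (Definition~\ref{def:persistent_clique_partition}), $\diam([y])\le r$, hence $d(z,y)\le r$. The triangle inequality then gives $d(x,y)\le 2r$, so $y\in B(x,2r)$. Setting $g_r(x)=x$ therefore gives a refinement $\mathcal{B}_P(r)\to\mathcal{B}(2r)$.

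This is one triangle inequality fewer than in the clique case. The improvement comes from the ball being centred at the index point $x$: that is why the factor is $2$ rather than $3$.

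The only step needing care is bookkeeping rather than estimation. Definition~\ref{def:c-approximation-contiguity} asks for maps $\mathcal{B}(r)\to\mathcal{B}_P(2r)$ and $\mathcal{B}_P(r)\to\mathcal{B}(2r)$. The statement gives $f_r$ at the same scale, so I would compose it with the structure refinement $\mathcal{B}_P(r)\to\mathcal{B}_P(2r)$ supplied by Proposition~\ref{prop:pullback-persistence}. No commutativity needs to be checked, since by Proposition~\ref{prop:all_refinements_contiguous} all the required diagrams commute up to contiguity.

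Finally, I would note the downstream use. Theorem~\ref{thm:main_propagation}, together with Lemma~\ref{lem:pullback_natural} and Dowker duality, transports this to
\[
H_*\Nerve\;\mathcal{B}\xleftrightarrow{2} H_*\Nerve(\mathcal{B}/P_X).
\]
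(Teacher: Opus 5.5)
Your proposal is correct and follows the paper's proof essentially line for line: the identity on the index set $X$ in both directions, with $B(x,r)\subseteq U_x$ immediate and $U_x\subseteq B(x,2r)$ obtained from a witness $z\in[y]\cap B(x,r)$ together with the block-diameter bound $\diam([y])\le r$. Your extra step is correct and is something the paper leaves implicit: Definition~\ref{def:c-approximation-contiguity} asks for a map $\mathcal{B}(r)\to\mathcal{B}_P(2r)$, and you obtain it by composing $f_r$ with the structure refinement $\mathcal{B}_P(r)\to\mathcal{B}_P(2r)$ from Proposition~\ref{prop:pullback-persistence}.
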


\begin{proof}
$\mathcal{B}(r) \to \mathcal{B}_P(r)$:
Let $B(x, r) \in \mathcal{B}(r)$.
For any $y \in B(x, r)$, we have $q_r(y) \in q_r(B(x, r))$, which implies $y \in q_r^{-1}(q_r(B(x, r)))$.
Thus, the set in the ball cover is strictly contained in the corresponding set of the pullback cover.
The refinement map is the identity on indices.

$\mathcal{B}_P(r) \to \mathcal{B}(2r)$:
Let $U_x = q_r^{-1}(q_r(B(x, r))) \in \mathcal{B}_P(r)$.
We must show $U_x \subseteq B(x, 2r)$.
Let $y \in U_x$.
By definition, $q_r(y) \cap q_r(B(x, r)) \neq \emptyset$ in the quotient space.
This means the cluster $[y]$ intersects the ball $B(x, r)$.
Therefore, there exists a witness point $z \in [y]$ such that $z \in B(x, r)$, i.e., $d(z, x) \leq r$.

Since $P$ is a clique partition at scale $r$, we know that $\diam([y]) \leq r$.
Since $y, z \in [y]$, we have $d(y, z) \leq r$.
By the triangle inequality:
$$ d(y, x) \leq d(y, z) + d(z, x) \leq r + r = 2r $$
Thus, $y \in B(x, 2r)$.
This holds for all $y \in U_x$, so $U_x \subseteq B(x, 2r)$.
The refinement map is again the identity on indices.
\end{proof}

\begin{corollary}
If $P_X$ is a persistent clique partition on $X$ then there exists a $2$-approximation with the standard \v Cech filtration:
$$ H_*\CoNerve(\mathcal{B}/P_X) \xleftrightarrow{2} H_*\Nerve(\mathcal{B}) \quad \text{(\v Cech filtration)}$$
\end{corollary}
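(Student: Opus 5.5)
The plan mirrors the proof of the corresponding corollary for the maximal-clique cover: first obtain an approximation at the level of covers on the fixed space $X$, then push it to homology, and finally transport it across natural isomorphisms until we reach $H_*\CoNerve(\mathcal{B}/P_X)$.

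\textbf{Step 1: cover-level approximation.} By Theorem~\ref{thm:ball-quotient-interleaving}, for every $r$ there are refinement maps $\mathcal{B}(r)\to\mathcal{B}_P(r)$ and $\mathcal{B}_P(r)\to\mathcal{B}(2r)$. The first can be composed with the structure refinement $\mathcal{B}_P(r)\to\mathcal{B}_P(2r)$ given by Proposition~\ref{prop:pullback-persistence}. This yields the two families demanded by Definition~\ref{def:c-approximation-contiguity} with $c=2$. Both $\mathcal{B}$ and $\mathcal{B}_P$ are persistent covers of the fixed space $X$, so they are $2$-approximated up to contiguity.

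\textbf{Step 2: propagation to homology.} Theorem~\ref{thm:main_propagation}, part (1), then gives
\[
H_*\Nerve\,\mathcal{B} \xleftrightarrow{2} H_*\Nerve\,\mathcal{B}_P .
\]
Here the coherence conditions come for free from Proposition~\ref{prop:all_refinements_contiguous} and Lemma~\ref{lem:homology_invariance_contiguity}.

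\textbf{Step 3: transport across natural isomorphisms.} Lemma~\ref{lem:pullback_natural} gives a natural isomorphism $H_*\Nerve\,\mathcal{B}_P\cong H_*\Nerve(\mathcal{B}/P_X)$. The naturality of the Dowker isomorphism with respect to refinements \cite{virk2019rips} gives $H_*\Nerve(\mathcal{B}/P_X)\cong H_*\CoNerve(\mathcal{B}/P_X)$. Alternatively, one can use the preceding corollary identifying all four quotient ball modules. Applying Lemma~\ref{lem:iso-transport} twice moves the $2$-approximation onto $H_*\CoNerve(\mathcal{B}/P_X)$. By symmetry of approximation this is the claimed statement, with the Čech filtration $\Nerve\,\mathcal{B}$ on the other side.

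\textbf{Main obstacle.} The delicate point is the naturality of the isomorphisms in Step 3, since $\mathcal{B}/P_X$ lives on the varying spaces $X/P(r)$. Its structure maps must therefore be defined through the refinements of Proposition~\ref{prop:pullback-persistence}, transported along the identification of index sets. I would handle this as in the proof of Lemma~\ref{lem:pullback_natural}. At each scale the index sets coincide, and every structure map is induced by some refinement between covers. Any two such refinements are contiguous, so the relevant squares commute on homology. For the Dowker step, I would apply Virk's naturality scalewise to the covers $(\mathcal{B}/P_X)(r)$, regarding the structure maps as refinements after pulling back to $X$. If that is awkward, I would instead invoke Dowker naturality on $\mathcal{B}_P$ (a genuine persistent cover of $X$) and then use Lemma~\ref{lem:pullback_natural} together with Proposition~\ref{prop:ball_self_dual}-type symmetry to reach the co-nerve.
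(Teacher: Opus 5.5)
Your proposal is correct and follows the paper's proof: the cover-level $2$-approximation between $\mathcal{B}$ and $\mathcal{B}_P$ from Theorem~\ref{thm:ball-quotient-interleaving}, propagation through $H_*\Nerve$ by Theorem~\ref{thm:main_propagation}, then the pullback-nerve identification and Dowker duality, transported via Lemma~\ref{lem:iso-transport}. The composition with $\mathcal{B}_P(r\le 2r)$ in Step 1 and the naturality of the Dowker step are details the paper leaves implicit; of your suggested fixes for the latter, the quotient-ball-duality route is the cleanest, since $\mathcal{B}/P_I$ is a genuine persistent cover of the fixed space $X$ with $\CoNerve(\mathcal{B}/P_X)=\Nerve(\mathcal{B}/P_I)$ and $\Nerve(\mathcal{B}/P_X)=\CoNerve(\mathcal{B}/P_I)$, so Virk's naturality applies to it directly.
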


\begin{proof}
$$ H_*\Nerve(\mathcal{B}) \xleftrightarrow{2} H_*\Nerve(\mathcal{B}_P) $$
We connect this to the quotient cover using the property that the nerve of a pullback is isomorphic to the nerve of the quotient cover (Lemma \ref{lem:pullback_nerve}):
$$ \Nerve(\mathcal{B}_P) = \Nerve(\mathcal{B}/P_X) $$
Finally, applying the duality established in the previous section ($\Nerve(\mathcal{B}/P_X) \cong \CoNerve(\mathcal{B}/P_X)$ via Dowker duality), we obtain the result.
\end{proof}

\section{Persistent Partitions}
\label{section:partitions}

All previous results assumed a persistent partition $P: \poset \to \Cat{Part}(X)$ with the property that for every scale $r \ge 0$,
\begin{equation}
\tag{$\star$}
\label{eq:clique_property}
\diam(C) \leq r \quad \text{for all } C \in P(r).
\end{equation}
The discrete partition $P_0(r) = \bigl\{\{x\} : x \in X\bigr\}$ satisfies~\eqref{eq:clique_property} trivially but it is not a useful one since its quotient map is the identity.
Conversely we seek partitions that are \emph{maximal}: as coarse as the diameter bound allows at every scale.

\begin{definition}[Maximal Clique-Partition]
A partition $P = \{C_i\}$ of a metric space $(X,d)$ at scale $s$ is an $s$-clique-partition if $\diam(C_i) \le s$ for all $i$.
It is \textbf{maximal} if $\diam(C_i \cup C_j) > s$ for every pair of distinct blocks $C_i, C_j \in P$.
\end{definition}

Note that maximal clique-partitions need not consist of maximal cliques; it is the partition that is maximal. They are also not unique in general~\cite{marin2023enumerating}.

\subsection{Hierarchical Clustering and Complete Linkage}
The natural source of persistent partitions is hierarchical clustering; a standard method for multi-scale analysis which constructs a series of nested partitions with respect to some linkage function.
Given a finite metric space $(X, d)$, a \emph{linkage function} $\ell$ assigns to each pair of disjoint non-empty subsets $A, B \subseteq X$  a non-negative real number depending only on the pairwise distances:
$$
\ell(A, B) = f\bigl(\{d(x, y) : x \in A,\, y \in B\}\bigr).
$$
The standard examples are single linkage ($f=\min$), complete linkage ($f=\max$),
and average linkage ($f=$ mean).

Given a linkage $\ell$, hierarchical clustering on $(X, d)$ produces a sequence of partitions
$$
P_{0} \prec P_{1} \prec \cdots \prec P_{N},
$$
where $P_{0}=\{\{x\}\mid\forall x\in X\}$ is the discrete partition, $P_{N} = \{X\}$, and each $P_{k+1}$ is obtained from $P_{k}$ by merging the two blocks $A, B \in P_{k}$ that correspond to the minimum value of $\ell$ over $P_{k}$; the \emph{merge scale} is $r_{k+1} := \ell(A, B)$, with $r_0 := 0$.
The sequence determines a persistent partition $P : \poset \to \Cat{Part}(X)$ by $P(s) = P_{k}$ for
$r_k \leq s < r_{k+1}$. See~\cite{jain1988algorithms,carlsson10acharacterization} for the standard formulation.

When several pairs achieve the minimum simultaneously --- a situation we call a \emph{tie} --- one has to make a choice which pairs to merge.
Different choices generally produce different sequences.
We address this specifically in the Section \ref{subsec:conservative}.

We now show that complete linkage $\ell (A,B) =\max_{x\in A,y\in B}d(x,y)$ is the most appropriate for our framework: it satisfies the diameter bound, it is the smallest linkage that does so, and it produces maximal clique-partitions at every scale.

\begin{proposition}[Diameter bound for complete linkage]\label{prop:complete-bound}
Let $P$ be the persistent partition produced by hierarchical clustering with complete linkage on a finite metric space $(X, d)$, under any choice of tie-breaking.
Then for every $r \geq 0$ and every $C \in P(r)$, $\diam(C) \leq r$.
\end{proposition}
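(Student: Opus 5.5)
Since $P(r)=P_k$ for $r_k \le r < r_{k+1}$ and $r \ge r_k$, it suffices to prove the invariant
\[
\diam(C) \le r_k \quad \text{for every } C \in P_k,\quad k=0,\dots,N.
\]
Then for any $r$ in the interval $[r_k, r_{k+1})$, or $r \ge r_N$ in the last case, we get $\diam(C) \le r_k \le r$. I would prove the invariant by induction on $k$. The base case is immediate, since $P_0$ consists of singletons, each of diameter $0 = r_0$.

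For the inductive step, suppose every block of $P_k$ has diameter at most $r_k$. Let $P_{k+1}$ be obtained by merging $A,B\in P_k$ with $r_{k+1}=\ell(A,B)=\max_{x\in A,y\in B}d(x,y)$. Blocks of $P_{k+1}$ other than $A\cup B$ are unchanged, so their diameters are at most $r_k$. For the new block, split pairs of points into three cases: both in $A$, both in $B$, or one in each. This gives
\[
\diam(A\cup B)=\max\{\diam(A),\diam(B),\ell(A,B)\}\le \max\{r_k, r_{k+1}\}.
\]
The identity uses only that complete linkage is exactly the maximum cross distance.

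The only genuine point, and the expected obstacle, is monotonicity of merge scales: $r_k \le r_{k+1}$. This is what lets the bound $\max\{r_k, r_{k+1}\}$ become $r_{k+1}$, and it is also needed for $P(r)$ to be well defined as a step function. I would prove it directly for complete linkage.

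Take the merge at step $k+1$ of $A,B\in P_k$. If neither $A$ nor $B$ was created at step $k$, then both were blocks of $P_{k-1}$ and were available at the previous step, so minimality gives $\ell(A,B)\ge r_k$. Otherwise, say $A = A'\cup A''$ was formed at step $k$ with $\ell(A',A'') = r_k$. Complete linkage satisfies
\[
\ell(A'\cup A'', B)=\max\{\ell(A',B),\ell(A'',B)\}\ge \ell(A',B)\ge r_k,
\]
where the last inequality holds because $A'$ and $B$ were both blocks of $P_{k-1}$ and $r_k$ was the minimum there. Ties change nothing, since each inequality is non-strict. So the argument is independent of tie-breaking, as the statement requires, and this closes the induction.
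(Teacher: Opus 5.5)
Your proof is correct and follows the same induction on merge steps as the paper, including the same three-case split for pairs of points in $A\cup B$. The only difference is that you explicitly prove that complete-linkage merge scales are monotone, $r_k\le r_{k+1}$. The paper's proof uses this silently in the chain $\diam(A)\le r_k\le r_{k+1}$, so your version is slightly more complete.
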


\begin{proof}
We prove $\diam(C) \leq r_k$ for every $C \in P_k$ by induction on $k$.
The base case $P_{0}$ is the singleton partition, so $\diam(C) = 0 \leq r_0 = 0$.

Assume the bound holds for $P_{k}$.
The partition $P_{k+1}$ differs from $P_{(k)}$ only in that two blocks $A, B \in P_{k}$ have been replaced by $A \cup B$, with $\ell(A, B) = r_{k+1}$.
For any $x, y \in A \cup B$:
\begin{itemize}
\item if both lie in $A$, then $d(x, y) \leq \diam(A) \leq r_k \leq r_{k+1}$;
\item if both lie in $B$, similarly $d(x, y) \leq r_{k+1}$;
\item if $x \in A$, $y \in B$, then
$d(x, y) \leq \max_{a \in A,\, b \in B} d(a, b) = \ell(A, B) = r_{k+1}$.
\end{itemize}
Hence $\diam(A \cup B) \leq r_{k+1}$.
All other blocks of $P_{k+1}$ are unchanged from $P_{k}$ and satisfy the bound by hypothesis.
\end{proof}



\begin{proposition}
\label{prop:complete_linkage_minimal}
Let $\ell$ be a linkage function whose clustering sequence on every finite metric space respects the diameter bound~\eqref{eq:clique_property} at every step.
Then for any finite metric space $(X,d)$ and any merge of $\ell$'s clustering, where blocks $A, B$ merge at scale $r = \ell(A,B)$,
\[
  \ell(A,B) \;\geq\; \diam(A \cup B).
\]
Complete linkage achieves this bound with equality at every merge: when
$A, B$ are blocks formed during complete linkage's clustering,
\[
  \ell_{\mathrm{complete}}(A,B) \;=\; \diam(A \cup B).
\]
In this sense, complete linkage merges as early as the diameter bound permits.
\end{proposition}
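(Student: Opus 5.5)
The proof splits into two parts: a lower bound valid for any admissible linkage, and an equality computation for complete linkage.

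\emph{Lower bound.} Fix $\ell$ satisfying the hypothesis, a finite metric space $(X,d)$, and a merge in $\ell$'s clustering where blocks $A,B\in P_k$ are merged into $A\cup B$ at scale $r_{k+1}=\ell(A,B)$. The persistent partition defined by the sequence has $P(s)=P_{k+1}$ for $r_{k+1}\le s<r_{k+2}$. In particular, $A\cup B\in P(r_{k+1})$.

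Condition~\eqref{eq:clique_property} at scale $s=r_{k+1}$ then gives
\[
\diam(A\cup B)\le r_{k+1}=\ell(A,B).
\]
This is exactly the claim. One caveat concerns ties or degenerate equal merge scales, where $r_{k+2}=r_{k+1}$ and the interval $[r_{k+1},r_{k+2})$ is empty. In that case $A\cup B$ is contained in a block of $P(r_{k+1})$, because partitions only coarsen. Since diameter is monotone under inclusion, the bound $\diam(A\cup B)\le r_{k+1}$ still follows from~\eqref{eq:clique_property} applied to that containing block. I expect this indexing subtlety, making sure the merged block is actually visible at its merge scale, to be the only real obstacle.

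\emph{Equality for complete linkage.} Let $A,B$ be blocks merged by complete linkage at step $k+1$. The inequality $\ell_{\mathrm{complete}}(A,B)\ge\diam(A\cup B)$ follows from the first part, since Proposition~\ref{prop:complete-bound} shows that complete linkage satisfies the diameter bound. One could also verify it directly from the case analysis in that proof.

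For the reverse inequality, note that $\ell_{\mathrm{complete}}(A,B)=\max_{a\in A,b\in B}d(a,b)$ is a maximum over a subset of the pairs in $A\cup B$. Hence it is at most $\diam(A\cup B)$, which gives equality.

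Strictly speaking, the equality holds for any disjoint $A,B$, not only for blocks arising in the clustering. The clustering context matters only for the interpretation: merging happens as early as the diameter bound permits.
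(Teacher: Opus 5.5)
Your proof is correct. The lower bound is the paper's argument: read the diameter bound off the partition at the merge scale. Your handling of an empty interval $[r_{k+1},r_{k+2})$ is valid, though you could avoid it by reading ``at every step'' as saying directly that every block of $P_{k+1}$ has diameter at most $r_{k+1}$.

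For the equality you take a different route. You sandwich: you get $\ell_{\mathrm{complete}}(A,B)\ge\diam(A\cup B)$ by applying the first part to complete linkage, which is admissible by Proposition~\ref{prop:complete-bound}. You get $\ell_{\mathrm{complete}}(A,B)\le\diam(A\cup B)$ because cross pairs form a subset of all pairs. The paper instead computes directly. It writes $\diam(A\cup B)=\max\bigl(\diam(A),\diam(B),\max_{a\in A,b\in B}d(a,b)\bigr)$ and bounds $\diam(A),\diam(B)$ by $\ell_{\mathrm{complete}}(A,B)$, since both blocks were formed at earlier merge scales that are no larger.

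Your version is more economical: it shows the equality is just the general lower bound plus a triviality. The paper's version is self-contained and shows why within-block distances never dominate. Combined with the minimality of each merge, it also gives the equality for any two blocks coexisting in some $P_k$, not only the pair being merged. Both routes rely on complete-linkage merge scales being nondecreasing; you use this through the step $r_k\le r_{k+1}$ in Proposition~\ref{prop:complete-bound}.

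Your closing remark, however, is false. For arbitrary disjoint $A,B$, only $\ell_{\mathrm{complete}}(A,B)\le\diam(A\cup B)$ holds. In $\mathbb{R}$, take $A=\{0,10\}$ and $B=\{5\}$: then $\ell_{\mathrm{complete}}(A,B)=5<10=\diam(A\cup B)$. Equality needs $\diam(A),\diam(B)\le\ell_{\mathrm{complete}}(A,B)$, and that is exactly what the clustering supplies; your own proof invoked the first part for that direction. So the clustering context is essential, not merely interpretive. Drop or correct that sentence.
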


\begin{proof}
The first inequality is the diameter bound itself: when $A, B$ merge at scale
$r$, the diameter bound on the resulting partition forces $\diam(A \cup B) \leq r = \ell(A,B)$.

For the equality, suppose $A, B$ are blocks of complete linkage's clustering,
formed at some step. Each was produced by previous merges at scales no greater
than $\ell_{\mathrm{complete}}(A,B)$, so $\diam(A) \leq \ell_{\mathrm{complete}}(A,B)$ and
$\diam(B) \leq \ell_{\mathrm{complete}}(A,B)$. Therefore
\[
  \diam(A \cup B)
  \;=\; \max\!\bigl(\diam(A),\, \diam(B),\, \max_{a \in A,\, b \in B} d(a,b)\bigr)
  \;=\; \max_{a \in A,\, b \in B} d(a,b)
  \;=\; \ell_{\mathrm{complete}}(A,B). \qedhere
\]
\end{proof}

Any linkage smaller than complete linkage may produce clusters with unbounded diameter relative to the scale, the well-known \emph{chaining effect}~\cite{carlsson2010classifyingclusteringschemes, ros2019hierarchical}.
In our framework, unbounded diameter renders the approximation guarantees of the previous sections impossible.
Conversely, any linkage strictly larger than complete linkage delays merges unnecessarily.
Furthermore, complete linkage produces maximal partitions:

\begin{proposition}
\label{prop:complete_linkage_maximal}
The partition produced by complete linkage at scale $s$ is a maximal $s$-clique-partition.
\end{proposition}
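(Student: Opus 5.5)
We need to show that the partition $P(s)=P_k$, where $r_k\le s<r_{k+1}$, satisfies two properties: every block has diameter at most $s$, and any two distinct blocks $C_i,C_j$ have $\diam(C_i\cup C_j)>s$.

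The first property is exactly Proposition~\ref{prop:complete-bound}, so only maximality needs an argument.

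For maximality, take two distinct blocks $C_i,C_j\in P_k$. At step $k$ the algorithm merges a pair minimizing $\ell_{\mathrm{complete}}$ over $P_k$, at scale $r_{k+1}$. So every pair of blocks of $P_k$ satisfies
\[
\ell_{\mathrm{complete}}(C_i,C_j)\;\ge\; r_{k+1}\;>\;s .
\]
Since $\diam(C_i\cup C_j)\ge \max_{a\in C_i,b\in C_j}d(a,b)=\ell_{\mathrm{complete}}(C_i,C_j)$, this gives $\diam(C_i\cup C_j)>s$, as required. If $k=N$, then $P(s)=\{X\}$ has only one block, so maximality holds vacuously.

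The main obstacle is making the claim $\ell(C_i,C_j)\ge r_{k+1}$ rigorous in the presence of ties and of the half-open intervals $[r_k,r_{k+1})$.

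Ties can be handled directly. If several pairs attain the minimum, the merges happen at the same scale, so some $r_{k+1}=r_k$ and the interval $[r_k,r_{k+1})$ is empty. Such indices never occur as $P(s)$. Whenever $P(s)=P_k$, we therefore have $s<r_{k+1}=\min_{\text{pairs in }P_k}\ell$, so the strict inequality holds for every pair, not just the merged one.

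It is also worth noting that merge scales are monotone, $r_{k+1}\ge r_k$, for complete linkage. Merging $A$ and $B$ can only increase the linkage of $A\cup B$ to a third block $D$, since $\ell(A\cup B,D)=\max(\ell(A,D),\ell(B,D))$. Monotonicity is what makes $P(s)$ well defined, and it is implicitly used above.
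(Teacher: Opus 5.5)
Your proof is correct and takes essentially the same route as the paper. The clique property comes from the complete-linkage diameter bound (Proposition~\ref{prop:complete-bound}), and maximality follows from $\diam(C_i\cup C_j)\ge \ell(C_i,C_j)>s$ for any two blocks present at scale $s$. Your use of $s<r_{k+1}=\min_{P_k}\ell$, together with the monotonicity of merge scales, makes rigorous the paper's informal step that blocks still unmerged at scale $s$ must have linkage exceeding $s$, including when there are ties.
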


\begin{proof}
Let $P_s = \{C_1, \dots, C_k\}$ be the partition at scale~$s$.
Each block $C_i$ was formed by a merge at some scale $r \leq s$, and by definition of complete linkage the diameter of each block $C_i$ satisfies $\diam(C_i) \leq r \leq s$, establishing the clique property.
For maximality, let $C_i, C_j \in P_s$ be distinct.
These blocks remain unmerged at scale~$s$, which means $\ell(C_i, C_j) > s$ (otherwise the pair $(C_i, C_j)$ would have been merged at or before scale~$s$).
Since $\ell(C_i, C_j) = \max_{x \in C_i,\, y \in C_j} d(x,y)$, we obtain $\diam(C_i \cup C_j) \geq \ell(C_i, C_j) > s$.
\end{proof}

\subsection{Conservative Complete Linkage}
\label{subsec:conservative}

Hierarchical clustering with complete linkage has a well-known shortcoming: when multiple pairs of blocks share the same minimal linkage value (ties), the algorithm must choose an order in which to process them, and different orderings can produce different outputs~\cite[Section~3.2.6]{jain1988algorithms}.
This is often dismissed in practice by assuming general position, but it prevents the assignment from being well-defined as a functor.

\citeauthor{carlsson10acharacterization} ~\cite{carlsson2010classifyingclusteringschemes,carlsson10acharacterization,carlsson2008persistentclusteringtheoremj} resolve this by allowing multiple merges at each step \emph{transitively}: two blocks $A,B$ merge at scale $s$ if there exists a chain of blocks $A=B_1,\dots ,B_k=B$ such that $\ell(B_i, B_{i+1})\leq s$.
This eliminates order-dependence but violates the diameter bound~\eqref{eq:clique_property}.

\begin{example}
\label{ex:line}
Let $X = \{A,B,C\}$ with $d(A,B) = 1$, $d(B,C) = 1$, and $d(A,C) = 2$.
\begin{equation}
\label{eq:line_example}
\begin{tikzpicture}[
    point/.style={circle, fill=black, inner sep=1.5pt},
    connection/.style={thick, opacity=0.8},
    scale=0.4, baseline=(B.base)
]
    \node[point, label=below:A] (A) at (0,0) {};
    \node[point, label=below:B] (B) at (3,0) {};
    \node[point, label=below:C] (C) at (6,0) {};
    \draw[connection] (A) -- (B);
    \draw[connection] (B) -- (C);
    \node[right, align=left] at (8, 0) {
        $d(A,B) = 1$ \\
        $d(B,C) = 1$ \\
        $d(A,C) = 2$
    };
\end{tikzpicture}
\end{equation}
At scale $r=1$:
\begin{itemize}
    \item Hierarchical clustering with standard complete linkage could merge $\{A,B\}$ or $\{B,C\}$ resulting in two possible partitions: $\{\{A,B\}, \{C\}\}$ or $\{\{A\}, \{B,C\}\}$. Diameter $\le 1$ holds in both cases.
    \item \citeauthor{carlsson2010classifyingclusteringschemes}'s method merges $\{A,B\}$ and $\{B,C\}$ simultaneously via transitivity. Result: $P(s=1)=\{\{A,B,C\}\}$ with diameter $\diam(\{A,B,C\}) = 2 > 1$
\end{itemize}

While effective at solving the order dependence on the presence of ties, it does so at the cost of losing the fundamental diameter property~\eqref{eq:clique_property}.
\end{example}

We introduce \textbf{conservative complete linkage}, which resolves both issues: it is order-independent and respects the diameter bound at every scale.
Given a set of tied blocks that is not itself a clique under complete linkage, a full merge breaks the diameter bound and any partial merge reintroduces order-dependence.
Refusing to merge the component at all is the unique choice that preserves both the diameter bound and order-independence; the merge is postponed to the scale at which the component becomes a clique.

\begin{figure}[!htb]
    \centering
    \begin{subfigure}[b]{0.32\textwidth}
        \centering
        \begin{tikzpicture}[scale=1.5, thick]
            \draw[->, gray!50] (-0.2,0) -- (2.2,0);
            \draw[->, gray!50] (-0.2,0) -- (-0.2,2.5) node[left] {$d$};
            \foreach \y in {1,2} \draw[gray!30, dashed] (-0.2,\y) -- (2.2,\y);
            \node[left, font=\footnotesize] at (-0.2,1) {1};
            \node[left, font=\footnotesize] at (-0.2,2) {2};
            \draw (0,0) node[below]{A} -- (0,1);
            \draw (1,0) node[below]{B} -- (1,1);
            \draw (2,0) node[below]{C} -- (2,2);
            \draw (0,1) -- (1,1);
            \draw (0.5,1) -- (0.5,2);
            \draw (0.5,2) -- (2,2);
            \draw (1.25, 2) -- (1.25, 2.2);
        \end{tikzpicture}
        \caption{Standard complete linkage}
    \end{subfigure}
    \hfill
    \begin{subfigure}[b]{0.32\textwidth}
        \centering
        \begin{tikzpicture}[scale=1.5, thick]
            \draw[->, gray!50] (-0.2,0) -- (2.2,0);
            \draw[->, gray!50] (-0.2,0) -- (-0.2,2.5);
            \foreach \y in {1,2} \draw[gray!30, dashed] (-0.2,\y) -- (2.2,\y);
            \node[left, font=\footnotesize] at (-0.2,1) {1};
            \node[left, font=\footnotesize] at (-0.2,2) {2};
            \draw (0,0) node[below]{A} -- (0,1);
            \draw (1,0) node[below]{B} -- (1,1);
            \draw (2,0) node[below]{C} -- (2,1);
            \draw (0,1) -- (2,1);
            \draw (1,1) -- (1,2.2);
        \end{tikzpicture}
        \caption{\citeauthor{carlsson10acharacterization}}
    \end{subfigure}
    \hfill
    \begin{subfigure}[b]{0.32\textwidth}
        \centering
        \begin{tikzpicture}[scale=1.5, thick]
            \draw[->, gray!50] (-0.2,0) -- (2.2,0);
            \draw[->, gray!50] (-0.2,0) -- (-0.2,2.5);
            \foreach \y in {1,2} \draw[gray!30, dashed] (-0.2,\y) -- (2.2,\y);
            \node[left, font=\footnotesize] at (-0.2,1) {1};
            \node[left, font=\footnotesize] at (-0.2,2) {2};
            \draw (0,0) node[below]{A} -- (0,2);
            \draw (1,0) node[below]{B} -- (1,2);
            \draw (2,0) node[below]{C} -- (2,2);
            \draw (0,2) -- (2,2);
            \draw (1,2) -- (1,2.2);
        \end{tikzpicture}
        \caption{Conservative (Ours)}
    \end{subfigure}
    \caption{Dendrograms for Example~\ref{ex:line}.
    \textbf{(a)}~Standard linkage merges an arbitrary pair at scale~1.
    \textbf{(b)}~Transitive linkage merges all three at scale~1, violating the diameter bound.
    \textbf{(c)}~Conservative linkage postpones the merge to scale~2, where the all-pairs condition is satisfied.}
    \label{fig:linkage-comparison}
\end{figure}

\begin{definition}[Linkage Graph]
\label{def:linkage_graph}
Let $Q$ be a partition of a finite metric space $(X,d)$ and let $r \ge 0$.
The \textbf{linkage graph} $\Gamma_r(Q)$ has vertex set $Q$, with an edge between distinct blocks $B, B' \in Q$ if and only if $\ell(B, B') \leq r$.
\end{definition}

\begin{definition}[Conservative Complete Linkage]
\label{def:conservative}
Let $(X,d)$ be a finite metric space with complete linkage $\ell$, and let $0 < r_1 < r_2 < \cdots < r_m$ be the distinct values of~$d$.
The \textbf{conservative complete linkage partition} $P^* : \poset \to \Cat{Part}(X)$ is defined inductively:
\begin{itemize}
    \item $P^*(r) = \bigl\{\{x\} : x \in X\bigr\}$ for $r < r_1$.
    \item At each critical value $r_i$, let $Q = P^*(r_{i-1})$. For each connected component $C$ of the linkage graph $\Gamma_{r_i}(Q)$:
    \begin{itemize}
        \item if $C$ is a clique in $\Gamma_{r_i}(Q)$, replace the blocks of $C$ in $Q$ by the single block $\bigcup_{B \in C} B$;
        \item otherwise, leave the blocks of $C$ unchanged.
    \end{itemize}
    The resulting partition is $P^*(r_i)$.
    \item $P^*(r) = P^*(r_i)$ for $r \in [r_i, r_{i+1})$.
\end{itemize}
\end{definition}

In other words, groups of blocks that are merely connected components on the linkage graph are not merged; only sets of blocks that form cliques in the linkage graph are merged.
Returning to Example~\ref{ex:line}: at $r = 1$ the graph $\Gamma_1(\{\{A\},\{B\},\{C\}\})$ has edges $A$–$B$ and $B$–$C$, forming a single connected component $\{\{A\},\{B\},\{C\}\}$.
This component is not a clique since the edge $A$–$C$ is absent ($\ell(\{A\},\{C\}) = 2 > 1$), so no merge occurs.
At $r = 2$ the missing edge appears, the component is a clique, and all three blocks merge. See Figure~\ref{fig:linkage-comparison}.

We now verify the required properties.

\begin{proposition}[Diameter Bound]
\label{prop:conservative_diameter}
For every $r \geq 0$ and every block $C \in P^*(r)$, $\diam(C) \leq r$.
\end{proposition}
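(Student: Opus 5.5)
The plan is an induction over the critical values $r_1<r_2<\dots<r_m$, mirroring the proof of Proposition~\ref{prop:complete-bound}. Because $P^*$ is constant on each interval $[r_i,r_{i+1})$ and on $[0,r_1)$, and because a bound of the form $\diam(C)\le r_i$ implies $\diam(C)\le r$ for every $r\ge r_i$, it suffices to show that every block of $P^*(r_i)$ has diameter at most $r_i$.

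The base case is $r<r_1$. Here $P^*(r)$ is the discrete partition, so every block is a singleton and has diameter $0\le r$.

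For the inductive step, assume every block of $Q=P^*(r_{i-1})$ has diameter at most $r_{i-1}$; when $i=1$, take $Q$ to be the singleton partition. A block of $P^*(r_i)$ arises in one of two ways.

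\emph{Unchanged block.} The block $B$ is unchanged from $Q$, either because it belongs to a component of $\Gamma_{r_i}(Q)$ that is not a clique or because it is isolated. Then $\diam(B)\le r_{i-1}<r_i$ by hypothesis. An isolated block is also a trivial clique, so this case is consistent with the definition.

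\emph{Merged block.} The block is $\bigcup_{B\in C}B$ for a connected component $C$ that is a clique in $\Gamma_{r_i}(Q)$. Take $x,y$ in this union.
\begin{itemize}
\item If $x,y$ lie in the same $B\in C$, then $d(x,y)\le \diam(B)\le r_{i-1}<r_i$.
\item If $x\in B$ and $y\in B'$ with $B\ne B'$, the clique condition gives an edge between $B$ and $B'$. By Definition~\ref{def:linkage_graph} this means $\ell(B,B')\le r_i$, and since $\ell$ is complete linkage, $d(x,y)\le\max_{a\in B,b\in B'}d(a,b)=\ell(B,B')\le r_i$.
\end{itemize}
Hence the merged block has diameter at most $r_i$.

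The crux is the merged case. This is exactly where the clique requirement, rather than mere connectivity, is used: \emph{every} pair of blocks in $C$ must be directly linked at scale $\le r_i$. Under transitive merging, by contrast, Example~\ref{ex:line} shows the bound fails. The rest is bookkeeping: all blocks of $P^*(r_i)$ are covered by these two cases, since the components of $\Gamma_{r_i}(Q)$ partition $Q$. Finally, extend the bound from $r_i$ to the whole interval $[r_i,r_{i+1})$, and to $r\ge r_m$, by monotonicity.
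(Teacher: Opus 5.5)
Your proof is correct and follows essentially the same route as the paper. Both argue by induction over the critical values $r_i$: unchanged blocks are handled by the inductive hypothesis, and each merged clique component is bounded by combining the within-block bound with $d(x,y)\le \ell(B,B')\le r_i$ across distinct blocks. Your explicit handling of the $i=1$ step and of the extension from $r_i$ to the interval $[r_i,r_{i+1})$ only spells out bookkeeping the paper leaves implicit.
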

\begin{proof}
The argument is the inductive structure of Proposition~\ref{prop:complete-bound}.
The base case and within-block case are identical. 
The only difference is that the merge at step $i$ may now combine $m \geq 2$ blocks $B_1, \ldots, B_m$ rather than a single pair.
By construction, these blocks form a clique in $\Gamma_{r_i}(P^*(r_{i-1}))$, so $\ell(B_j, B_k) \leq r_i$ for all $j \neq k$.
For $x \in B_j$, $y \in B_k$
with $j \neq k$,
$$
d(x, y) \leq \max_{a \in B_j,\, b \in B_k} d(a, b) = \ell(B_j, B_k) \leq r_i.
$$
Combined with the inductive bound on each $B_j$, this gives $\mathrm{diam}\bigl(\bigcup_j B_j\bigr) \leq r_i$.
\end{proof}

\begin{proposition}
\label{prop:conservative_functor}
The assignment $P^*$ of Definition~\ref{def:conservative} defines a functor $P^*: \poset \to \Cat{Part}(X)$ and, together with Proposition~\ref{prop:conservative_diameter}, a persistent clique partition in the sense of Definition~\ref{def:persistent_clique_partition}.
\end{proposition}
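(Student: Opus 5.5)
The plan is to show three things: $P^*$ is well defined at every scale, its partitions are nested, and the induced refinement maps compose correctly. Functoriality into $\Cat{Part}(X)$ then follows, and the clique condition is exactly Proposition~\ref{prop:conservative_diameter}.

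\emph{Well-definedness.} At a critical value $r_i$, the connected components of $\Gamma_{r_i}(Q)$ partition the vertex set $Q$. Replacing each clique component by the union of its blocks, and leaving the others alone, therefore yields a family of non-empty, pairwise disjoint sets whose union is $X$; hence $P^*(r_i)$ is a partition. The construction depends only on the metric and on $Q$. It involves no ordering of pairs, since components and the property of being a clique are intrinsic to the graph. So $P^*(r)$ is uniquely determined for every $r$, by induction on $i$ and by the piecewise-constant extension on $[r_i,r_{i+1})$ (and on $[r_m,\infty)$).

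\emph{Refinement.} For $s\le t$ I will show that every block of $P^*(s)$ is contained in a block of $P^*(t)$. Since $P^*$ is constant between critical values, it suffices to check consecutive critical values $r_{i-1}\to r_i$ and then chain. In the step $Q=P^*(r_{i-1})\to P^*(r_i)$, each block of $Q$ either survives unchanged or is absorbed into the union of its component, so it lies in a block of $P^*(r_i)$. Containment is transitive, so $P^*(s)$ refines $P^*(t)$.

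\emph{Functoriality.} Because partitions consist of disjoint blocks, a block $C\in P^*(s)$ lies in \emph{exactly one} block of $P^*(t)$. The refinement map $f_{s,t}$ is therefore unique, which is the key difference from the maximal-clique cover. Uniqueness gives functoriality at once. The map $f_{s,s}$ is the identity, because $C\subseteq C$ and $C$ meets no other block. For $r\le s\le t$, both $f_{s,t}\circ f_{r,s}(C)$ and $f_{r,t}(C)$ are blocks of $P^*(t)$ containing $C\neq\emptyset$, so they coincide.

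The main obstacle is conceptual rather than computational. One must ensure that the inductive definition really produces a single well-defined object, independent of any processing order, which is the failure mode of standard complete linkage. I would handle this by stressing that each step is a function of the pair $(Q, r_i)$ alone. Uniqueness of refinement maps between partitions then removes any need for the lexicographic choices used for $\mathcal{M}$.
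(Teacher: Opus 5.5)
Your proposal is correct and follows essentially the same argument as the paper. Both check refinement across consecutive critical values and chain these steps. Both then derive identity and composition from the uniqueness of refinement maps between partitions, since a non-empty block lies in at most one of a family of disjoint blocks.
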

\begin{proof}
\emph{Objects.} The assignment $r \mapsto P^*(r)$ is well-defined by Definition~\ref{def:conservative}.

\emph{Morphisms.} For consecutive critical values $r_{i-1} \leq r_i$, the partition $P^*(r_i)$ is obtained from $P^*(r_{i-1})$ by replacing certain collections of blocks $A_1, \dots, A_k$ by their union $C = A_1 \cup \dots \cup A_k$, and leaving every other block unchanged.
Define the refinement map $f : P^*(r_{i-1}) \to P^*(r_i)$ by $f(A_j) = C$ for each $j$ and $f(B) = B$ otherwise.
This map is the unique refinement: given any block $B \in P^*(r_{i-1})$, since blocks of $P^*(r_i)$ are pairwise disjoint, at most one of them contains $B$, and the construction guarantees that exactly one does.
The general case $P^*(r) \to P^*(s)$ for $r \leq s$ follows by composing the refinements at the critical values in between.

\emph{Identity and composition.} Both axioms follow from uniqueness: the identity $P^*(r) \to P^*(r)$ and any composite $P^*(s \leq t) \circ P^*(r \leq s)$ are refinement maps between the same pair of partitions, hence equal to the unique such map.
\end{proof}

In general position (all pairwise distances distinct), standard complete linkage, the transitive variant, and conservative complete linkage all produce the same output.
The three methods differ only in how they handle ties.
Pseudocode for the conservative algorithm is given in Appendix~\ref{sec:algos}; it takes as input a sorted edge list and runs in $O(N^2 \log N)$ time.
\section{Computation}
\label{sec:computation}

To compute persistence in practice we want to use state-of-the-art software such as Ripser \cite{Bauer2021Ripser}, and this imposes two requirements on the input: it must be a flag complex (so that the entire filtration is encoded by its 1-skeleton) and it must be a filtration (a nested sequence of inclusions, not a tower with contractions).
Neither holds for $\CoNerve(\mathcal{M}/\mathcal{P})$ out of the box: it is not a flag complex, and the sequence of quotient complexes is a simplicial tower, since cluster merges induce vertex contractions rather than inclusions.

Both gaps are bridged naturally within our framework.
For the first, although the previous sections developed the theory in a metric setting, because that is where the Vietoris–Rips complex and maximal-clique cover are classically defined, neither construction actually invokes the triangle inequality: both are determined by a diameter condition on a symmetric, reflexive pairwise dissimilarity.
The 1-skeleton of $\CoNerve(\mathcal{M}/\mathcal{P})$ defines  exactly such a dissimilarity $d_\mathcal{P}$ on $X/\mathcal{P}$, and its flag completion is the Vietoris–Rips complex of $(X/\mathcal{P}, d_\mathcal{P})$.
The cover-level approximation guarantees of Section~\ref{sec:quotient} transfer to this flag complex unchanged: they require $d$ to be a metric, but not $d_P$.
For the second, we convert the resulting tower into a filtration via the coning procedure of ~\citeauthor{kerber2019barcodes}~\cite{kerber2019barcodes}, which increases the complex size only marginally and preserves the persistence barcode.
Both adaptations operate within the cover framework and preserve the approximation guarantees established in the previous sections.

\subsection{Constructing a Flag Complex}
The Vietoris–Rips complex $\mathrm{Rips}(Y,d)(r)=\{\sigma\subseteq Y:\diam(\sigma)_d\leq r\}$ and the maximal-clique cover do not require the triangle inequality and can be generalized to the quotient space where we have a symmetric dissmilarity measure (a metric absent of triangle inequality) defined by the $1$-skeleton of  $\CoNerve(\mathcal{M}/\mathcal{P})(r)$:

\begin{lemma}
\label{lem:quotient_metric}
An edge $\{C_i, C_j\}$ exists in $\CoNerve(\mathcal{M}/\mathcal{P})(r)$ if and only if some maximal $r$-clique $M$ intersects both $C_i$ and $C_j$, which holds precisely when
\[
    d_{\mathcal{P}(r)}(C_i, C_j) \;:=\; \min_{x \in C_i,\, y \in C_j} d(x, y) \;\leq\; r.
\]
\end{lemma}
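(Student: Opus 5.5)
The plan is to prove two equivalences in turn: first the combinatorial description of edges, then its metric reformulation.

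\emph{Step 1 (edges of the quotient Co-Nerve).} By Corollary~\ref{cor:quotient_of_covers}, $\CoNerve(\mathcal{M}/\mathcal{P})(r) = \CoNerve(\mathcal{M}(r))/\mathcal{P}(r)$. We could argue directly from Definition~\ref{def:quotient_covers} instead. Either way, $\{C_i,C_j\}$ is a simplex exactly when some element $q_r(M)$ of the quotient cover contains both $C_i$ and $C_j$. Since $q_r(M)=\{[x]: x\in M\}$, this says precisely that $M\cap C_i\neq\emptyset$ and $M\cap C_j\neq\emptyset$ for some maximal $r$-clique $M$. This settles the first equivalence.

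\emph{Step 2 (reduction to a single pair of points).} Suppose $M$ meets both blocks. Pick $x\in M\cap C_i$ and $y\in M\cap C_j$. Then $d(x,y)\le\diam(M)\le r$, so $d_{\mathcal{P}(r)}(C_i,C_j)\le r$.

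Conversely, suppose the minimum is at most $r$. Since $X$ is finite, the minimum is attained at some $x\in C_i$, $y\in C_j$ with $d(x,y)\le r$. Then $\{x,y\}$ has diameter at most $r$. By Proposition~\ref{prop:clique_properties}, or the maximal-extension argument in Lemma~\ref{lem:maximal_clique_refinement}, $\{x,y\}$ is contained in some maximal $r$-clique $M$, and this $M$ meets both $C_i$ and $C_j$.

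\emph{Main obstacle.} There is little difficulty. The only points needing care are the following:
\begin{enumerate}
\item Distinct blocks must give distinct vertices, so that the $1$-simplex is genuine; this holds because the blocks of a partition are disjoint.
\item The minimum must exist; this uses finiteness of $X$. In general the statement should be read with an infimum that is attained.
\item The triangle inequality is not used anywhere. Only symmetry and the fact that $\diam(\{x,y\})=d(x,y)$ enter, which is consistent with the remark that $d_{\mathcal{P}}$ need not be a metric.
\end{enumerate}
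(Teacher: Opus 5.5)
Your proof is correct and complete. The paper states this lemma without proof. Your two steps are the natural direct argument: first unwind $C_i \in q_r(M) \iff M \cap C_i \neq \emptyset$, then extend a minimizing pair $\{x,y\}$ to a maximal $r$-clique via Proposition~\ref{prop:clique_properties}. This is the same reasoning the paper uses implicitly in the forward step of Proposition~\ref{prop:flag-approx}. Your caveats also fit how the lemma is used in Section~\ref{sec:computation}: the minimum is attained because $X$ is finite, and the triangle inequality is never needed.
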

We write $\mathcal{M}_Q(r)$ for the maximal clique cover of $(X/\mathcal{P}(r),\, d_{\mathcal{P}(r)})$ and observe:
\[
    \mathrm{Flag}_{\mathrm{1\text{-}skel}}(\CoNerve(\mathcal{M}/\mathcal{P})(r))
    \;=\; \mathrm{Rips}(X/\mathcal{P},\, d_\mathcal{P})(r)
    \;=\; \CoNerve(\mathcal{M}_Q)(r).
\]
The flag completion is a strict enlargement:
$\CoNerve(\mathcal{M}/\mathcal{P})(r) \;\subseteq\; \CoNerve(\mathcal{M}_Q)(r)$, filling in high-dimensional simplices whenever all pairwise connections exist.
Crucially, this is the same functor $\CoNerve \circ \mathcal{M}$ applied to a \emph{smaller} (non-metric) space.
The approximation guarantee transfers by the same mechanism as before.


\begin{lemma}\label{lem:pullback-MQ-persistent}
The pullback cover $\mathcal{M}_Q^{\mathcal{P}}(r) = q_r^*(\mathcal{M}_Q(r))$
is a persistent cover of $X$.
\end{lemma}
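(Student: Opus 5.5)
We need to show that for $r\le s$ there is a refinement map $\mathcal{M}_Q^{\mathcal{P}}(r)\to\mathcal{M}_Q^{\mathcal{P}}(s)$. Functoriality up to the paper's standard (refinement maps exist, any two are contiguous by Proposition~\ref{prop:all_refinements_contiguous}, or a canonical lexicographic choice as for $\mathcal{M}$) then follows as before. The plan mirrors the proof of Proposition~\ref{prop:pullback-persistence}, but the cover upstairs is now computed on a quotient space that changes with the scale, so the argument must go through $X$ directly.

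\textbf{Step 1: a diameter condition on $X$.} An element of $\mathcal{M}_Q^{\mathcal{P}}(r)$ has the form $V=q_r^{-1}(M)$, where $M$ is a maximal $r$-clique of $(X/\mathcal{P}(r),d_{\mathcal{P}(r)})$. Thus $V$ is a union of blocks $C$ of $\mathcal{P}(r)$, and any two of these blocks $C,C'$ satisfy $d_{\mathcal{P}(r)}(C,C')=\min_{x\in C,y\in C'}d(x,y)\le r$.

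\textbf{Step 2: push the blocks forward to scale $s$.} Since $\mathcal{P}(r)$ refines $\mathcal{P}(s)$, each block $C\subseteq V$ lies in a unique block $D\in\mathcal{P}(s)$. Consider two such blocks $D\supseteq C$ and $D'\supseteq C'$ with $D\neq D'$. Then
\[
d_{\mathcal{P}(s)}(D,D')=\min_{x\in D,y\in D'}d(x,y)\le \min_{x\in C,y\in C'}d(x,y)\le r\le s,
\]
because a minimum over a larger set can only decrease. So the set $q_s(V)=\{D\in\mathcal{P}(s): D\cap V\neq\emptyset\}$ has $d_{\mathcal{P}(s)}$-diameter at most $s$. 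If all the blocks of $V$ land in one $D$, the diameter is $0$, using reflexivity $d_{\mathcal{P}}(D,D)=0$.

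\textbf{Step 3: extend to a maximal clique.} Every $s$-clique in the finite space $(X/\mathcal{P}(s),d_{\mathcal{P}(s)})$ is contained in a maximal one. This holds without the triangle inequality, by the same argument as Lemma~\ref{lem:maximal_clique_refinement} or Proposition~\ref{prop:clique_properties}. So there is a maximal $s$-clique $N$ with $q_s(V)\subseteq N$. Then $V\subseteq q_s^{-1}(q_s(V))\subseteq q_s^{-1}(N)\in\mathcal{M}_Q^{\mathcal{P}}(s)$, and we set $F_{r,s}(V)=q_s^{-1}(N)$. Surjectivity of $q_r$ makes each $\mathcal{M}_Q^{\mathcal{P}}(r)$ a cover of $X$. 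Identity and composition hold up to contiguity automatically, or strictly if we choose $N$ lexicographically minimal, as in the functoriality proof for $\mathcal{M}$.

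\textbf{Main obstacle.} The key point is Step 2: the dissimilarity $d_{\mathcal{P}}$ itself changes with the scale. We need monotonicity of single-linkage distance under coarsening of the partition, together with the fact that merged blocks have distance $0$ to themselves. Once this is checked, the rest is identical to the earlier pullback argument.
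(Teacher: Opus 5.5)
Your proof is correct and follows the paper's argument: in both, the key step is that coarsening the partition can only decrease the single-linkage dissimilarity, $d_{\mathcal{P}(s)}(\pi_{r,s}(C),\pi_{r,s}(C'))\le d_{\mathcal{P}(r)}(C,C')$, so $r$-cliques of the quotient push forward to $s$-cliques, which extend to maximal ones. You then check the inclusion $q_r^{-1}(M)\subseteq q_s^{-1}(N)$ directly, which is cleaner than the paper's closing appeal to Proposition~\ref{prop:pullback-persistence}, since that proposition is stated for a persistent cover of the fixed space $X$ and $\mathcal{M}_Q$ is not one.

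One caution: drop the aside that a lexicographically minimal choice gives strict functoriality. The lex-minimal maximal $t$-clique containing a clique need not contain the lex-minimal maximal $s$-clique containing it; the same unjustified step, ``$N\subseteq P$'', appears in the paper's functoriality proof for $\mathcal{M}$. So only the up-to-contiguity standard is actually established, and that is all Theorem~\ref{thm:main_propagation} needs.
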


\begin{proof}
Let $r \leq s$. Since $\mathcal{P}(r)$ refines $\mathcal{P}(s)$, the induced map $\pi_{r,s} : X/\mathcal{P}(r) \to X/\mathcal{P}(s)$ satisfies: $d_{\mathcal{P}(s)}(\pi(C_i), \pi(C_j)) \leq d_{\mathcal{P}(r)}(C_i, C_j)$, since the minimum on the right is taken over a subset of the points considered on the left.
Hence any $r$-clique in $(X/\mathcal{P}(r), d_{\mathcal{P}(r)})$ maps to an $s$-clique in $(X/\mathcal{P}(s), d_{\mathcal{P}(s)})$, which by Lemma~\ref{lem:maximal_clique_refinement} is contained in some maximal $s$-clique.
The persistence of the pullback then follows from Proposition~\ref{prop:pullback-persistence}.
\end{proof}

\begin{proposition}\label{prop:flag-approx}
Let $\mathcal{P}$ be a persistent clique partition of $(X, d)$, and let $\mathcal{M}_Q$ denote the maximal clique cover of the quotient space
$(X/\mathcal{P},\, d_{\mathcal{P}})$.
Then
\[
    \mathcal{M} \xleftrightarrow{3} \mathcal{M}_Q^{\mathcal{P}},
\]
where $\mathcal{M}_Q^{\mathcal{P}}(r) = q_r^*(\mathcal{M}_Q(r))$ is the pullback
cover on $X$.
\end{proposition}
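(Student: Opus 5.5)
By Definition~\ref{def:c-approximation-contiguity} and Proposition~\ref{prop:all_refinements_contiguous}, it is enough to produce, at every scale $r$, two refinement maps
\[
f_r : \mathcal{M}(r) \to \mathcal{M}_Q^{\mathcal{P}}(r), \qquad g_r : \mathcal{M}_Q^{\mathcal{P}}(r) \to \mathcal{M}(3r).
\]
Contiguity then holds automatically. Lemma~\ref{lem:pullback-MQ-persistent} already tells us that $\mathcal{M}_Q^{\mathcal{P}}$ is a persistent cover, so only the existence of these two families needs checking. I would model the argument on Theorem~\ref{thm:mult-approx}.

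\textbf{Forward map.} Take $M \in \mathcal{M}(r)$ and consider its image $q_r(M) \subseteq X/\mathcal{P}(r)$. For blocks $C_i \neq C_j$ both meeting $M$, pick $x \in C_i \cap M$ and $y \in C_j \cap M$. Then
\[
d_{\mathcal{P}(r)}(C_i, C_j) \le d(x,y) \le r .
\]
So $q_r(M)$ is an $r$-clique for $d_{\mathcal{P}(r)}$. By Lemma~\ref{lem:maximal_clique_refinement}, in its dissimilarity form which avoids the triangle inequality, $q_r(M)$ lies in some maximal clique $N \in \mathcal{M}_Q(r)$. Hence
\[
M \subseteq q_r^{-1}(q_r(M)) \subseteq q_r^{-1}(N),
\]
and we set $f_r(M) := q_r^{-1}(N)$.

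\textbf{Backward map.} Take $U = q_r^{-1}(N)$ with $N \in \mathcal{M}_Q(r)$, and let $x, y \in U$ with blocks $C_x, C_y \in N$. If $C_x = C_y$, then $d(x,y) \le \diam(C_x) \le r$. Otherwise $d_{\mathcal{P}(r)}(C_x, C_y) \le r$, so there are $x' \in C_x$ and $y' \in C_y$ with $d(x',y') \le r$. The triangle inequality in the original metric $d$ then gives
\[
d(x,y) \le d(x,x') + d(x',y') + d(y',y) \le 3r,
\]
using $\diam(C_x), \diam(C_y) \le r$ from the clique-partition property. Thus $\diam(U) \le 3r$, and Proposition~\ref{prop:clique_properties} puts $U$ inside some element of $\mathcal{M}(3r)$, which defines $g_r$.

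\textbf{Main obstacle.} The delicate point is the backward bound. Unlike Theorem~\ref{thm:mult-approx}, the blocks of $N$ need not share a common point of a single $r$-clique: $N$ is only pairwise close in the single-linkage dissimilarity $d_{\mathcal{P}}$. I must therefore make sure the argument uses only pairwise witnesses $x', y'$ for the specific pair $C_x, C_y$. It does, so the factor stays $3$ rather than growing with the size of $N$. I would also check that $d$'s triangle inequality is used only in $X$, never for $d_{\mathcal{P}}$.
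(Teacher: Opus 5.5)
Your proposal is correct and follows the paper's proof essentially step for step. The forward map uses that $q_r(M)$ is an $r$-clique for $d_{\mathcal{P}}$ and the chain $M \subseteq q_r^{-1}(q_r(M)) \subseteq q_r^{-1}(N)$; the backward map uses pairwise witnesses $x',y'$ and the triangle inequality in $d$ to get $\diam(U)\le 3r$, then Proposition~\ref{prop:clique_properties}. Your separate handling of $C_x = C_y$ is a small bit of extra care that the paper leaves implicit, since $d_{\mathcal{P}}(C_x,C_x)=0$ already covers that case.
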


\begin{proof}
We construct the two families of refinement maps.

\medskip\noindent\textbf{Forward:} $\mathcal{M}(r) \to \mathcal{M}_Q^{\mathcal{P}}(r)$.
Let $M \in \mathcal{M}(r)$ be a maximal $r$-clique in $X$. For any two partition blocks
$C_i, C_j$ intersecting $M$, there exist $x \in C_i \cap M$ and $y \in C_j \cap M$ with $d(x,y) \leq r$, so $d_{\mathcal{P}}(C_i, C_j) \leq r$.
Thus $q_r(M)$ is an $r$-clique in the quotient space and is contained in some maximal $r$-clique $K \in \mathcal{M}_Q(r)$.
It follows that $M \subseteq q_r^{-1}(q_r(M)) \subseteq q_r^{-1}(K) \in \mathcal{M}_Q^{\mathcal{P}}(r)$.

\medskip\noindent\textbf{Backward:} $\mathcal{M}_Q^{\mathcal{P}}(r) \to \mathcal{M}(3r)$.
Let $U = q_r^{-1}(K) \in \mathcal{M}_Q^{\mathcal{P}}(r)$ for some maximal $r$-clique $K$ in the quotient space.
We bound the diameter of $U$. Let $a, b \in U$, belonging to partition blocks $C_a$ and $C_b$ respectively, with $C_a, C_b \in K$.
Since $K$ is an $r$-clique, $d_{\mathcal{P}}(C_a, C_b) \leq r$, so there exist $u \in C_a,\, v \in C_b$ with $d(u,v) \leq r$.
By the triangle inequality:
\[
    d(a, b) \;\leq\; \underbrace{d(a, u)}_{\leq\, r} + \underbrace{d(u, v)}_{\leq\, r}
    + \underbrace{d(v, b)}_{\leq\, r} \;\leq\; 3r,
\]
where the first and third terms use the clique-partition diameter bound $\mathrm{diam}(C_a) \leq r$ and $\mathrm{diam}(C_b) \leq r$.
Thus $\mathrm{diam}(U) \leq 3r$, and by Proposition~\ref{prop:clique_properties}, $U$ is contained in some maximal $3r$-clique.
\end{proof}

Note that the triangle inequality is invoked only on $(X, d)$ where it holds; the dissimilarity $d_\mathcal{P}$ on the quotient is never required to satisfy it.

\begin{corollary}\label{cor:flag-interleaving}
Let $\mathcal{P}$ be a persistent clique partition.
The persistence module of the Vietoris-Rips filtration of the quotient space is $3$-approximated with the standard Vietoris-Rips filtration:
\[
    H_*\CoNerve(\mathcal{M}) \xleftrightarrow{3} H_*\CoNerve(\mathcal{M}_Q).
\]
\end{corollary}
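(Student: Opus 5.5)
The plan mirrors the proof of the corollary following Theorem~\ref{thm:mult-approx}, with $\mathcal{M}_Q$ replacing $\mathcal{M}/P$. First, we take the cover-level result, Proposition~\ref{prop:flag-approx}, which gives $\mathcal{M}\xleftrightarrow{3}\mathcal{M}_Q^{\mathcal{P}}$ up to contiguity. Lemma~\ref{lem:pullback-MQ-persistent} ensures that $\mathcal{M}_Q^{\mathcal{P}}$ is a genuine persistent cover of the fixed space $X$. Applying Theorem~\ref{thm:main_propagation}(2) then yields
\[
H_*\CoNerve(\mathcal{M}) \xleftrightarrow{3} H_*\CoNerve(\mathcal{M}_Q^{\mathcal{P}}).
\]

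Second, we transport the right-hand side to $H_*\CoNerve(\mathcal{M}_Q)$ through the chain of isomorphisms
\[
H_*\CoNerve(\mathcal{M}_Q^{\mathcal{P}}) \cong H_*\Nerve(\mathcal{M}_Q^{\mathcal{P}}) \cong H_*\Nerve(\mathcal{M}_Q) \cong H_*\CoNerve(\mathcal{M}_Q).
\]
The first and last steps are Dowker duality, made natural in $r$ by the functorial Dowker theorem of \cite{virk2019rips}. That theorem applies to any persistent cover of a set and needs no metric structure, so it also holds for the quotient cover $\mathcal{M}_Q$. The middle step is Lemma~\ref{lem:pullback_nerve} applied at each scale to the surjection $q_r$, which gives the equality of complexes $\Nerve(q_r^*\mathcal{M}_Q(r))=\Nerve(\mathcal{M}_Q(r))$ on the common index set. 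Finally, Lemma~\ref{lem:iso-transport}, applied three times, moves the $3$-approximation along these natural isomorphisms and proves the claim.

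The main obstacle is the naturality of the middle isomorphism, because the underlying spaces $X/\mathcal{P}(r)$ change with $r$. I would argue it as in Lemma~\ref{lem:pullback_natural}. For $r\le s$, a structure map of $\mathcal{M}_Q$ sends a maximal clique $K$ at scale $r$ to a maximal clique $K'$ at scale $s$ containing $\pi_{r,s}(K)$ (Lemma~\ref{lem:pullback-MQ-persistent}). The same index map is a refinement $q_r^{-1}(K)\subseteq q_s^{-1}(K')$ for the pullback covers, since $q_s=\pi_{r,s}\circ q_r$. So both nerve maps are induced by the same function on index sets, and the identity square commutes strictly. Any other choice of refinement is contiguous (Proposition~\ref{prop:all_refinements_contiguous}) and therefore agrees on homology (Lemma~\ref{lem:homology_invariance_contiguity}).

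A subtlety is that the structure maps of $\mathcal{M}_Q$ are not refinements in $\Cat{Cov}$ of a fixed set. Contiguity therefore has to be checked directly, using the same intersection argument pushed forward through $\pi_{r,s}$. The two images of a clique both contain $\pi_{r,s}(K)$, so they intersect, and the argument goes through.
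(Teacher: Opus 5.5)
Your proposal is correct and matches the paper's proof. It uses Proposition~\ref{prop:flag-approx} with Theorem~\ref{thm:main_propagation} for the pullback cover $\mathcal{M}_Q^{\mathcal{P}}$, then the chain Dowker, Lemma~\ref{lem:pullback_nerve}, Dowker, transported via Lemma~\ref{lem:iso-transport}; your check that both nerve structure maps come from the same index map (via $q_s=\pi_{r,s}\circ q_r$) is more careful than the paper, which only asserts naturality. One citation is loose, and the paper shares it: Virk's theorem concerns refinements of covers of a fixed set, so the final Dowker step for $\mathcal{M}_Q$ actually needs naturality of Dowker duality under general relation morphisms $(\pi_{r,s},\text{index map})$, which follows from a routine contiguity argument of the same kind as your last paragraph.
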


\begin{proof}
By Proposition~\ref{prop:flag-approx} and Theorem~\ref{thm:main_propagation}, we obtain
$H_*\CoNerve(\mathcal{M}) \xleftrightarrow{3} H_*\CoNerve(\mathcal{M}_Q^{\mathcal{P}})$.
The chain of natural isomorphisms
$$
    H_*\CoNerve(\mathcal{M}_Q^{\mathcal{P}})
    \;\cong\; H_*\mathrm{Nrv}(\mathcal{M}_Q^{\mathcal{P}})
    \;\cong\; H_*\mathrm{Nrv}(\mathcal{M}_Q)
    \;\cong\; H_*\CoNerve(\mathcal{M}_Q)
$$
follows from Dowker duality and Lemma~\ref{lem:pullback_nerve} (the pullback preserves the nerve).
\end{proof}

Since $\CoNerve(\mathcal{M}_Q)$ is a flag complex, it is entirely determined by its $1$-skeleton, all operations like edge insertions and cluster merges, can be performed on a weighted graph.
This graph is passed to Ripser (or other efficient persistent homology software).

\subsection{From Towers to Filtrations}\label{subsec:towers-to-filtrations}

The sequence of quotient complexes $\CoNerve(\mathcal{M}_Q)(r)$ does not form a filtration in the classical sense.
As the parameter $r$ increases, two competing forces act on the complex: new edges appear (as pairwise distances fall below the threshold), but cluster merges \emph{contract} vertices, producing simplicial maps that are not inclusions.
The result is a \emph{simplicial tower}: a sequence of simplicial complexes connected by simplicial maps that include both inclusions and contractions.

Since the complex $\CoNerve(\mathcal{M}_Q)$ is a flag complex, it is entirely determined by its $1$-skeleton. In particular, every contraction in the tower is an \emph{edge contraction} in the underlying graph: when two clusters $C_i$ and $C_j$ merge at scale $c$, the vertex $C_j$ is identified with $C_i$ in the $1$-skeleton.
This is the elementary contraction $(C_i, C_j) \mapsto C_i$ in the sense of Dey et al.~\cite{dey2014computing}.

We convert the resulting simplicial tower into a filtration using the \emph{coning} technique~\cite{kerber2019barcodes,dey2014computing}, which
replaces each contraction with a sequence of simplex inclusions that preserve the barcode.
Each contraction $(u, v) \mapsto u$ in a complex $K$ is replaced with the inclusion of the cone $u * \overline{\mathrm{St}}(v, K)$ (Fig. ~\ref{fig:conning}).
The key optimization of~\cite{kerber2019barcodes} is to cone only over the \emph{active} closed star and to choose the representative (between $u$ and $v$) that minimizes the number of added simplices.

Given that we have a flag complex we apply the coning procedure on the $1$-skeleton: coning a vertex $v$ into $u$ adds edges from $u$ to every active neighbor of $v$ that is not already adjacent to $u$.
The resulting weighted graph encodes the $1$-skeleton of the equivalent filtration.
\begin{figure}[htb]
    \centering
    \begin{tikzpicture}[
  triangle/.style       = {fill=color_r230g57b71, opacity=0.3, draw=none},
  cone_triangle/.style  = {fill=color_r230g57b71, opacity=0.15, draw=none},
  edge_orig/.style      = {black, opacity=0.8, thick},
  node/.style           = {circle, fill=black, inner sep=1.5pt},
  scale=0.7
]
  \definecolor{color_r230g57b71}{rgb}{0.902,0.224,0.278}

  %

  \begin{scope}[xshift=0cm]
    \coordinate (u)   at ( 0.0, 2.5);
    \coordinate (v)   at ( 0.6, 1.0);
    \coordinate (t1)  at (-0.7, 3.3);
    \coordinate (t2)  at ( 0.8, 3.4);
    \coordinate (p)   at (-2.5, 3.0);
    \coordinate (sq1) at (-1.1, 0.7);
    \coordinate (sq2) at (-0.4,-0.9);
    \coordinate (sq3) at ( 1.0,-1.0);
    \coordinate (r1)  at ( 2.0, 1.7);
    \coordinate (r2)  at ( 2.6, 0.6);

    \fill[triangle] (u)  -- (t1)  -- (t2)  -- cycle;
    \fill[triangle] (v)  -- (sq1) -- (sq2) -- cycle;
    \fill[triangle] (v)  -- (sq2) -- (sq3) -- cycle;
    \fill[triangle] (v)  -- (r1)  -- (r2)  -- cycle;

    \draw[edge_orig] (t1) -- (p);   
    \draw[edge_orig] (u)  -- (t1);
    \draw[edge_orig] (u)  -- (t2);
    \draw[edge_orig] (t1) -- (t2);
    \draw[edge_orig] (u)  -- (v);
    \draw[edge_orig] (v)  -- (sq1);
    \draw[edge_orig] (sq1)-- (sq2);
    \draw[edge_orig] (v)  -- (sq2);
    \draw[edge_orig] (sq2)-- (sq3);
    \draw[edge_orig] (v)  -- (sq3);
    \draw[edge_orig] (v)  -- (r1);
    \draw[edge_orig] (r1) -- (r2);
    \draw[edge_orig] (v)  -- (r2);

    \foreach \pt in {u,v,t1,t2,p,sq1,sq2,sq3,r1,r2}
      \node[node] at (\pt) {};

    \node[below left=1pt and 1pt]  at (u) {$u$};
    \node[above left=1pt and 3pt] at (v) {$v$};
  \end{scope}


\draw[right hook->, thick, >=stealth]
  (3.7, 1.0) -- node[above, midway, font=\small] {coning} (5.4, 1.0);

  \begin{scope}[xshift=8.9cm]
    \coordinate (u)   at ( 0.0, 2.5);
    \coordinate (v)   at ( 0.6, 1.0);
    \coordinate (t1)  at (-0.7, 3.3);
    \coordinate (t2)  at ( 0.8, 3.4);
    \coordinate (p)   at (-2.5, 3.0);
    \coordinate (sq1) at (-1.1, 0.7);
    \coordinate (sq2) at (-0.4,-0.9);
    \coordinate (sq3) at ( 1.0,-1.0);
    \coordinate (r1)  at ( 2.0, 1.7);
    \coordinate (r2)  at ( 2.6, 0.6);

    \fill[cone_triangle] (u) -- (v)  -- (t1) -- cycle;
    \fill[cone_triangle] (u) -- (v)  -- (t2) -- cycle;
    \fill[cone_triangle] (v) -- (t1) -- (t2) -- cycle;

    \fill[triangle] (u)  -- (t1)  -- (t2)  -- cycle;
    \fill[triangle] (v)  -- (sq1) -- (sq2) -- cycle;
    \fill[triangle] (v)  -- (sq2) -- (sq3) -- cycle;
    \fill[triangle] (v)  -- (r1)  -- (r2)  -- cycle;

    \draw[edge_orig] (t1) -- (p);
    \draw[edge_orig] (u)  -- (t1);
    \draw[edge_orig] (u)  -- (t2);
    \draw[edge_orig] (t1) -- (t2);
    \draw[edge_orig] (u)  -- (v);
    \draw[edge_orig] (v)  -- (sq1);
    \draw[edge_orig] (sq1)-- (sq2);
    \draw[edge_orig] (v)  -- (sq2);
    \draw[edge_orig] (sq2)-- (sq3);
    \draw[edge_orig] (v)  -- (sq3);
    \draw[edge_orig] (v)  -- (r1);
    \draw[edge_orig] (r1) -- (r2);
    \draw[edge_orig] (v)  -- (r2);

    \draw[edge_orig] (v) -- (t1);
    \draw[edge_orig] (v) -- (t2);

    \foreach \pt in {u,v,t1,t2,p,sq1,sq2,sq3,r1,r2}
      \node[node] at (\pt) {};

    \node[below left=1pt and 1pt]  at (u) {$u$};
    \node[above left=1pt and 3pt] at (v) {$v$};

  \end{scope}

\end{tikzpicture}
    \caption{Coning replaces the contraction $(u, v) \mapsto u$ with the inclusion of the cone $u * \overline{\mathrm{St}}(v, K)$ (right), yielding an equivalent filtration that preserves the original barcode. To minimize added simplices, we apply the optimization from~\cite{kerber2019barcodes} and cone over the vertex with the smaller closed star: $u * \overline{\mathrm{St}}(v, K)$ vs $v * \overline{\mathrm{St}}(u, K)$.}
    \label{fig:conning}
\end{figure}

\subsection{Benchmarks}

\paragraph{Datasets} We consider both real-world biological datasets and synthetic ones. 
Starting with the real datasets: \texttt{pbmc3k} is a single-cell RNA sequencing dataset comprising 3,000 human peripheral blood mononuclear cells; the raw gene expression count matrix is preprocessed and reduced to 50 dimensions following \cite{satija2015spatial,wolf2018scanpy}. 
We also consider three biological datasets from the benchmark \cite{otter2017roadmap}: \texttt{celegans}, representing the neural network of \textit{C. elegans}, the \texttt{H1V1}, consisting of genomic sequences from the HIV1 virus and the \texttt{vicsek} biological model for particle aggregation.  

We also consider the following synthetic datasets.
We generated samples from the Klein bottle (\texttt{klein\_400}, \texttt{klein\_900}) and use the \textit{Stanford Dragon Dataset} (\texttt{dragon\_1k}, \texttt{dragon\_2k}) taken from \cite{otter2017roadmap}. 
To test higher-dimensional properties, we include random orthogonal $3\times 3$ matrices (\texttt{o3\_1024}, \texttt{o3\_2048}) from \cite{Bauer2021Ripser,bauer2017phat}. 
Finally, we generated random samples from the unit $n$-sphere in $\mathbb{R}^{n+1}$ and a torus in $\mathbb{R}^3$ using the \texttt{TaDAsets} Python package.

\paragraph{Setup}
All benchmarks were run on a M1 Macbook Pro with 16 Gbs of memory.
Runtime was capped at 24h, for all datasets we consider the threshold given by the minimal enclosing radius $\min_{x\in X} \max_{y\in X} d(x,y)$.
For Table \ref{tab:filtration_comparison} we computed up to $H_1$ homology while for Figure \ref{fig:benchmarks}a up to $H_2$.
Naturally, for Figure \ref{fig:benchmarks}b, barcodes were computed up to $H_n$ for each $n-$sphere considered.
We use Ripser \cite{Bauer2021Ripser} for both the filtrations resulting from our quotient covers and the standard Vietoris-Rips.
Due to integer overflow in Ripser's combinatorial number system we had to use the 128bit version for $H_5$.

For each dataset we computed the $1-$skeleton of the equivalent filtration of the tower (Algorithm~\ref{algo:main}).
We have the pseudocode for all algorithms in the Appendix~\ref{sec:algos}.
The code and python package is available at \url{https://github.com/antonio-leitao/coperto}.

\paragraph{Results}

Beyond the reduction in filtration size, consistently reaching 3 to 4 orders of magnitude (Table \ref{tab:filtration_comparison}, Fig. \ref{fig:results_complexity}), we observe that our method is most effective precisely when the Vietoris-Rips filtration struggles.
Figure \ref{fig:results_complexity} illustrates this relationship between dataset complexity and our efficiency gains.
The $x$-axis measures the number of Vietoris-Rips simplices generated per data point.
For datasets with non-trivial topological structure, such as \texttt{dragon}, \texttt{klein}, and \texttt{o3}, the standard Vietoris-Rips filtration explodes in size.
In contrast, our method compresses this redundancy while still capturing the topology.
These topologically rich datasets are precisely the ones where we observe the largest reduction in filtration size ($y-$axis) and runtime speedup (marker size).
This effectiveness at describing topology with very few simplices enables us to capture high dimensional homology very efficiently (Fig. \ref{fig:benchmarks_sphere}, \ref{fig:benchmarks_h5})

Furthermore, our approach restores linear scalability to the filtration construction.
Figure \ref{fig:results_scalability} compares the empirical growth rates of the standard VR filtration against our method across dataset pairs (e.g., \texttt{dragon\_1k} vs. \texttt{dragon\_2k}).
We estimate the scaling complexity $O(N^\alpha)$ by comparing the growth of the filtration size ($S$) relative to the number of points ($N$) across dataset pairs (e.g., \texttt{dragon\_1k} vs. \texttt{dragon\_2k}).
The growth exponent is calculated as $\alpha = \frac{\ln(S_2) - \ln(S_1)}{\ln(N_2) - \ln(N_1)}$.
For Vietoris-Rips filtrations we observe a value of $\alpha \approx 3.0$ indicates cubic scaling where doubling the input size results in an $8\times$ increase in filtration size ($2^3$), which is to be expected for a clique filtration computed up to triangles.
Conversely, our filtration size roughly doubles, exhibiting near-linear scalability ($\alpha \approx 1.0$) where doubling the input results in only a $\approx 2\times$ increase in filtration size.
By restoring linear scalability, our method effectively transforms Persistent Homology computation from an exponential problem into a linear one, making it feasible for significantly larger datasets (Fig.~\ref{fig:benchmarks_torus}).

\begin{figure*}[!htb]

    \centering
    \begin{subfigure}[b]{0.45\textwidth}
        \centering
        \includegraphics[height=5cm]{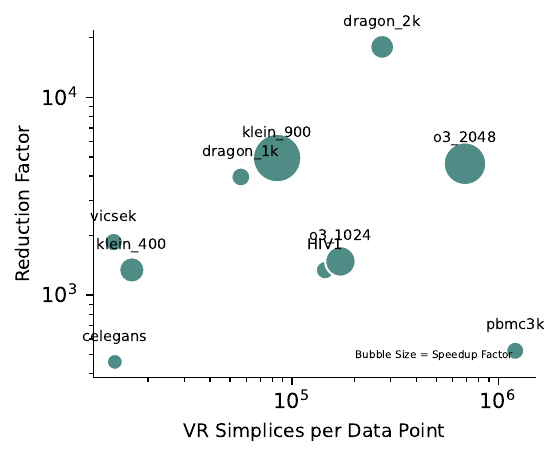}
        \caption{}
    \label{fig:results_complexity}
    \end{subfigure}
    \begin{subfigure}[b]{0.45\textwidth}
        \centering
        \includegraphics[height=5cm]{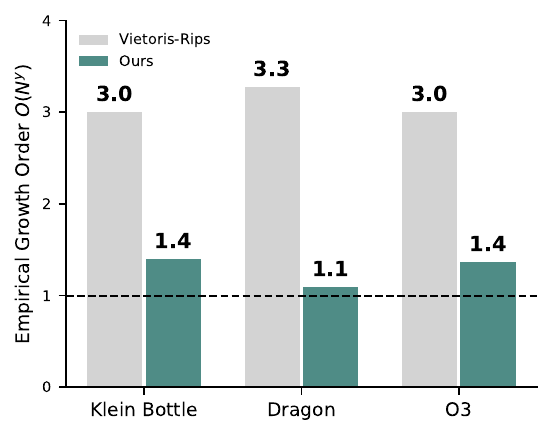}
        \caption{}
    \label{fig:results_scalability}
    \end{subfigure}
    \caption{
    \textbf{(a)}~Filtration size reduction factor ($y$-axis) and subsequent computation speedup (marker size) as a function of the number of simplices in the Vietoris-Rips filtration per data point ($x-$axis).
    \textbf{(b)}~The Empirical Order of Growth of the filtration size ($S$) relative to the number of points ($N$).
    The $y-$value represents the empirical exponent.
    }
    \label{fig:results}
\end{figure*}

\section{Conclusion}

The central message of this work is that covers, not simplicial complexes, are the natural level of abstraction for constructing and comparing filtrations.
All diagrams of refinement maps commute up to contiguity automatically, existence alone is sufficient.
Any functor from covers to simplicial complexes that preserves contiguity then maps these diagrams to diagrams commuting up to chain homotopy, from which strict algebraic interleavings follow.
The Nerve and Co-Nerve being two such functors.
In this sense, the interleaving guarantees between persistence modules originate at the cover level.
This means that any new functor from covers to simplicial complexes satisfying this single property would inherit the full cover-level machinery: the automatic commutativity, the interleaving guarantees, and the quotient constructions developed here.
Identifying such functors, and understanding whether they capture topological information beyond what the Nerve and Co-Nerve provide, is a natural direction.

As a first application of this framework, we constructed an unconditional approximation of the Vietoris–Rips filtration by quotienting covers with persistent partitions built from conservative complete linkage.
The resulting filtrations are orders of magnitude smaller while maintaining a $\log 3$-interleaving, restoring empirical near-linear scaling in the number of data points and making both large datasets and high homological dimensions accessible.
The approximation constant of $3$ shows up from the triangle inequality applied across a single partition block; tighter constants may be achievable with partitions satisfying stronger geometric constraints, and our experiments suggest that the empirical interleaving is often much tighter than this worst-case bound.

\section*{Acknowledgements}
I thank Nina Otter for guidance and encouragement throughout this work.
\printbibliography
\appendix
\section{Algorithms}
\label{sec:algos}

This is the main algorithm that takes in a distance matrix, sorted, and returns a weighted graph representing a filtration of the $\CoNerve(\mathcal{M}_p)$ functor.
The algorithm runtime is dominated by the $\mathcal{O}(N^2\log N)$ of the distance matrix computation and sorting.
\begin{algorithm}[H]
\caption{Creating a Filtered 1-skeleton from a Tower}
\label{algo:main}
\begin{algorithmic}[1]
\Require Sorted edge list $E_{\text{sorted}} = \{(u,v,d)\}$
\Ensure Filtered 1-skeleton $G$
\State $C \gets$ new \textsc{ConservativeCompleteLinkage}($n$)
\State $T \gets$ new \textsc{Skeleton}($n$)
\State $i \gets 0$
\For{$i = 0$ \textbf{to} $|E_{\text{sorted}}|-1$}
    \State $(u,v,d) \gets E_{\text{sorted}}[i]$
    \State $C.\textsc{AddEdge}(u,v,d)$
    \State $T.\textsc{AddEdge}(u,v,d)$
    \State $d_{\text{next}} \gets$ $(i < |E_{\text{sorted}}|-1)$ ? $E_{\text{sorted}}[i+1].\text{dist}$ : $\bot$ \Comment{Peek next distance}
    \If{$d_{\text{next}} \ne d$}
        \For{ $(x,y,d_{\text{contr}})$ in $C.\textsc{Contractions}(d)$}
            \State $T.\textsc{Contract}(x,y,d_{\text{contr}})$
        \EndFor
    \EndIf
\EndFor
\Return $T.G$ \Comment{Final weighted graph}
\end{algorithmic}
\end{algorithm}

\begin{algorithm}[H]
\caption{Conservative Complete Linkage (Object)}
\label{alg:ccl-class}
\begin{algorithmic}[1]
\Class{\textsc{ConservativeCompleteLinkage}}
    \State $\mathit{uf}$: Union-Find on $n$ elements
    \State $\mathit{size}[\cdot]$: cluster sizes (initially $1$)
    \State $\mathit{edges}[\cdot][\cdot]$: inter-cluster edge counts (initially $0$)
    \Statex
    \Function{AddEdge}{$u, v, d$}
        \State $r_u \gets \mathit{uf}.\mathrm{find}(u)$;\quad $r_v \gets \mathit{uf}.\mathrm{find}(v)$
        \If{$r_u \neq r_v$}
            \State $\mathit{edges}[r_u][r_v] \mathrel{+}= 1$;\quad $\mathit{edges}[r_v][r_u] \mathrel{+}= 1$
        \EndIf
    \EndFunction
    \Statex
    \Function{Contractions}{$d$} \Comment{Call once per tie batch}
        \Repeat
            \State $\mathit{merged} \gets \textbf{false}$
            \State $\Gamma \gets$ graph on active roots: edge $(A,B)$ iff $\mathit{edges}[A][B] = \mathit{size}[A] \cdot \mathit{size}[B]$
            \For{each connected component $\mathcal{C}$ of $\Gamma$}
                \If{$\mathcal{C}$ is a clique in $\Gamma$}
                    \State Choose representative $w \in \mathcal{C}$
                    \For{each $B_k \in \mathcal{C} \setminus \{w\}$}
                        \State $\mathit{uf}.\mathrm{union}(w, B_k)$
                        \State $\mathit{size}[w] \gets \mathit{size}[w] + \mathit{size}[B_k]$
                        \State Aggregate $\mathit{edges}[B_k][\cdot]$ into $\mathit{edges}[w][\cdot]$
                        \State \textbf{yield} $(w, B_k, d)$
                    \EndFor
                    \State $\mathit{merged} \gets \textbf{true}$
                \EndIf
            \EndFor
        \Until{$\lnot\, \mathit{merged}$}
    \EndFunction
\EndClass
\end{algorithmic}
\end{algorithm}

\begin{algorithm}[H]
\caption{Creating a 1-Skeleton}
\label{alg:coning_oo}
\begin{algorithmic}[1]
\Class{\textsc{Skeleton}}
    \State \quad $G$: Weighted graph storing birth times $D[u][v]$ (initially $\infty$)
    \State \quad $\mathcal{A}$: Set of active vertices
    \State \quad $\mathcal{U}$: Union-Find data structure

    \Function{AddEdge}{$u, v, \text{dist}$}
        \State $u \gets \mathcal{U}.\text{find}(u)$; \ $v \gets \mathcal{U}.\text{find}(v)$
        \If{$u \neq v$ \textbf{and} $u, v \in \mathcal{A}$}
            \State $D[u][v] \gets \min(D[u][v], \text{dist})$ \Comment{Preserve original birth time}
        \EndIf
    \EndFunction

    \Function{Contract}{$u, v, \text{dist}$}
        \State $u \gets \mathcal{U}.\text{find}(u)$; \ $v \gets \mathcal{U}.\text{find}(v)$
        \If{$u = v$} \State \Return \EndIf

        \State $N_u \gets \text{Neighbors}(u) \cap \mathcal{A}$  \Comment{Compute active stars} 
        \State $N_v \gets \text{Neighbors}(v) \cap \mathcal{A}$

        \If{$|N_u| > |N_v|$}  \Comment{Cone the smaller star into the larger} 
            \State \textbf{swap} $u \leftrightarrow v$
            \State \textbf{swap} $N_u \leftrightarrow N_v$
        \EndIf
        \For{\textbf{each} $w \in N_u$ such that $w \neq v$}
             \State \textsc{AddEdge}($v, w, t_{\text{cone}}$)
        \EndFor
        
        \State $\mathcal{A}.\text{remove}(u)$
        \State $\mathcal{U}.\text{union}(u \to v)$ \Comment{$u$ points to $v$}
    \EndFunction
\EndClass
\end{algorithmic}
\end{algorithm}

\begin{algorithm}[H]
\caption{Complete Linkage Clustering}
\begin{algorithmic}[1]
\Require Number of points $n$, sorted edge list $E_{sorted} = \{(u,v,d)\}$
\Ensure Merge history $\mathcal{H}$

\State Initialize DSU with $n$ singleton clusters
\For{each cluster root $r$}
    \State $size[r] \gets 1$
    \State $edges[r] \gets \emptyset$ \Comment{Map: neighbor $\rightarrow$ edge count}
\EndFor
\State $\mathcal{H} \gets \emptyset$

\For{each $(u,v,d) \in E_{sorted}$}
    \State $r_u \gets \mathrm{find}(u)$
    \State $r_v \gets \mathrm{find}(v)$
    \If{$r_u = r_v$}
        \State \textbf{continue}
    \EndIf

    \State $edges[r_u][r_v] \gets edges[r_u][r_v] + 1$
    \State $edges[r_v][r_u] \gets edges[r_v][r_u] + 1$

    \If{$edges[r_u][r_v] = size[r_u] \cdot size[r_v]$}
        \Comment{Complete linkage condition satisfied}

        \State Choose winner $w \in \{r_u,r_v\}$ with larger $|edges[w]|$
        \State $l \gets$ the other root
        \State $\mathrm{union}(w,l)$

        \State $size[w] \gets size[r_u] + size[r_v]$

        \For{each neighbor $x$ of $l$}
            \State $edges[w][x] \gets edges[w][x] + edges[l][x]$
            \State $edges[x][w] \gets edges[w][x]$
            \State remove $edges[x][l]$
        \EndFor
        \State remove $edges[w][l]$

        \State Append $(r_u,r_v,d)$ to $\mathcal{H}$
    \EndIf
\EndFor

\Return $\mathcal{H}$
\end{algorithmic}
\end{algorithm}

\begin{algorithm}[H]
\caption{Conservative Complete Linkage}
\label{algo:conservative}
\begin{algorithmic}[1]
\Require Number of points $n$, sorted edge list $E_{\text{sorted}} = \{(u,v,d)\}$
\Ensure Merge history $\mathcal{H}$
\State Initialize DSU with $n$ singleton clusters
\For{each cluster root $r$}
    \State $\mathit{size}[r] \gets 1$
    \State $\mathit{edges}[r] \gets \emptyset$ \Comment{Map: neighbor $\to$ edge count}
\EndFor
\State $\mathcal{H} \gets \emptyset$

\For{$i = 0$ \textbf{to} $|E_{\text{sorted}}|-1$}
    \State $(u, v, d) \gets E_{\text{sorted}}[i]$
    \Statex
    \Comment{\textbf{Phase 1:} Add all tied edges}
    \State $r_u \gets \mathrm{find}(u),\; r_v \gets \mathrm{find}(v)$
    \If{$r_u \neq r_v$}
        \State $\mathit{edges}[r_u][r_v] \mathrel{+}= 1$;\quad $\mathit{edges}[r_v][r_u] \mathrel{+}= 1$
    \EndIf

    \State $d_{\text{next}} \gets (i < |E_{\text{sorted}}|-1)$ ?
            $E_{\text{sorted}}[i+1].\text{dist}$ : $\bot$

    \Statex
    \Comment{\textbf{Phase 2:} Merge clique components when batch of ties ends}
    \If{$d_{\text{next}} \ne d$}
        \Repeat
            \State $\mathit{merged} \gets \textbf{false}$
            \State $\Gamma \gets$ graph on active roots with edge $(A,B)$ iff $\mathit{edges}[A][B] = \mathit{size}[A] \cdot \mathit{size}[B]$
            \For{each connected component $\mathcal{C} = \{B_1,\dots,B_m\}$ of $\Gamma$}
                \If{$\mathcal{C}$ is a clique in $\Gamma$} \Comment{All $\binom{m}{2}$ pairs are edges}
                    \State Choose representative $w \in \mathcal{C}$
                    \For{each $B_k \in \mathcal{C} \setminus \{w\}$}
                        \State $\mathrm{union}(w, B_k)$
                        \State $\mathit{size}[w] \gets \mathit{size}[w] + \mathit{size}[B_k]$
                        \State Aggregate $\mathit{edges}[B_k][\cdot]$ into $\mathit{edges}[w][\cdot]$
                        \State Append $(w, B_k, d)$ to $\mathcal{H}$
                    \EndFor
                    \State $\mathit{merged} \gets \textbf{true}$
                \EndIf
            \EndFor
        \Until{$\lnot\, \mathit{merged}$}
    \EndIf
\EndFor

\Return $\mathcal{H}$
\end{algorithmic}
\end{algorithm}

\end{document}